\documentclass[smallextended,numbook,runningheads]{svjour3}
\usepackage{amsmath}
\usepackage{lmodern}
\smartqed  
\usepackage{url}

\usepackage{amsfonts,amssymb}
\usepackage{epsfig}
\usepackage{booktabs}

\usepackage{xcolor}
\usepackage{mathrsfs}
\graphicspath{{figure/}}
\usepackage{float}
\usepackage{nicematrix}
\usepackage{mathabx}

\usepackage{algorithm}
\usepackage{algpseudocode}

\usepackage[T3,T1]{fontenc}

\journalname{}
\date{ \phantom{b} \vspace{45mm}\phantom{e}}
\def\makeheadbox{\hspace{-4mm}Version of 16 June 2026}

\def\R{\mathbb{R}}
 \def\iu{\mathrm{i}}
\def\eps{\varepsilon}
\def\d{\mathrm{d}}
\def\e{{\mathrm e}}

\def\wt{\widetilde}
\def\wh{\widehat}

\newdimen\GGGlength
\newdimen\GGGheight
\newbox\GGGbox
\def\GGGput[#1,#2](#3,#4)#5{%
  \setbox\GGGbox\vbox{\hbox{#5}\kern0pt}%
  \GGGlength\wd\GGGbox%
  \divide\GGGlength by100 \multiply\GGGlength by#1%
  \GGGheight\ht\GGGbox%
  \divide\GGGheight by100 \multiply\GGGheight by#2%
  \put(#3,#4){\kern-\GGGlength\raise-\GGGheight\box\GGGbox}}

\def\black{\color{black}}

\definecolor{mygreen}{RGB}{10,120,10}

\def\bcl{\black}
\def\ecl{\black}

\def\bys{\black}
\def\eys{\black}

\begin{document}

\title{Weighted finite difference methods for a nonlinear Klein--Gordon equation with high oscillations in space and time}

\titlerunning{Weighted finite difference methods for nonlinear Klein--Gordon equations}

\author{Yanyan~Shi$^{1}$, Christian Lubich$^1$}
\authorrunning{Y.\ Shi, Ch.\ Lubich}

\institute{
$^1$~Mathematisches Institut, Univ.\ T\"ubingen, D-72076 T\"ubingen, Germany.\\
\phantom{$^2$~}\email{\{Shi, Lubich\}@na.uni-tuebingen.de}
}

\date{ }

\maketitle

\begin{abstract} 
We consider a nonlinear Klein--Gordon equation in the nonrelativistic limit regime with initial data in the form of a modulated highly oscillatory exponential. In this regime of a small scaling parameter $\eps\ll 1$, the solution exhibits rapid oscillations in both time and space. 
\bcl  
The solution is approximated, up to $\mathcal{O}(\eps)$, by a superposition of two polarized solutions, which are wave packets that move with opposite group velocities proportional to $\eps^{-1}$.  The equations for polarized solutions are formulated in co-moving coordinates and are then discretized \ecl by an explicit and an implicit exponentially weighted finite difference method. While the explicit weighted leapfrog method needs to satisfy a CFL-type stability condition, the implicit weighted Crank--Nicolson method is unconditionally stable. 
Both methods achieve second-order accuracy with time steps and mesh sizes that are not restricted in magnitude by~$\eps$. \bcl For the approximation of polarized solutions, \ecl the methods are uniformly convergent in the range from arbitrarily small to moderately bounded $\eps$. 
Numerical experiments illustrate the theoretical results.

\bigskip

\noindent
{\it Keywords. \rm\,Exponentially weighted finite difference method, nonlinear Klein--Gordon equation,  highly oscillatory solution, dispersion relation, asymptotic-preserving, uniformly accurate.}

\bigskip

\it\noindent
Mathematics Subject Classification (2020): \rm\, 65M06, 65M12, 65M15

\end{abstract}

\section{Introduction}

\subsection{Problem formulation}
We consider the numerical solution of nonlinear dispersive wave equations that exhibit highly oscillatory behaviour in both time and space. As a concrete example, we focus on the nonlinear Klein--Gordon equation in the scaling  known as the nonrelativistic limit regime; see e.g.~\cite{bao2012analysis,bao2014uniformly,faou2014asymptotic,baumstark2018uniformly}. With a small parameter $0<\eps\ll 1$, we consider
\begin{equation}\label{eq:KG}
\eps^2\partial^2_t u - \partial^2_{x} u + \frac{1}{\eps^2}u + \lambda |u|^2 u = 0, \qquad x\in \mathbb{R}, \ 0\le t \le T,
\end{equation}
where the solution $u = u(t,x)$ is complex-valued. Here, $\lambda$ is a fixed nonzero real number independent of $\eps$, and
$T>0$ is a final time independent of $\eps$. The initial conditions are given by modulated highly oscillatory exponentials,
\begin{equation}\label{eq:init}
u(0,x) = a_0(x)\e^{\iu \kappa  x / \eps},\quad \partial_t u(0,x) = \frac{1}{\eps^2}b_0(x)\e^{\iu \kappa  x / \eps}.
\end{equation}
 The initial conditions include a rapidly oscillating phase factor with a fixed wave number $\kappa \in \mathbb{R} \setminus \{0\}$. The functions $a_0, b_0 : \mathbb{R} \to \mathbb{C}$ are smooth profiles independent of $\eps$ having bounded support. 
 
 Our objective is to numerically approximate the solution over a fixed time interval $0 \le t \leq T$ that is independent of $\eps$.

\subsection{Related literature}
Numerically solving \eqref{eq:KG}--\eqref{eq:init} is challenging due to the high oscillations of the solution in both time and space, \bcl and further because of the two opposite fast group velocities, inversely proportional to $\eps$, of polarized wave packets.  \ecl The literature predominantly focuses on smooth initial data and handling temporal oscillations, with methods including finite difference schemes~\cite{bao2012analysis}, splitting techniques~\cite{dong2014time}, approaches based on Duhamel’s formula~\cite{baumstark2018uniformly}, time two-scale methods~\cite{chartier2015uniformly,bao2019comparison}, and multiscale expansions~\cite{faou2014asymptotic,bao2014uniformly}.
However, only few numerical works address simultaneous spatial and temporal oscillations for \eqref{eq:KG} or related equations. We are aware of two approaches so far: (i) the nonlinear geometric optics method of~\cite{crouseilles2017nonlinear}, where a \bcl differently scaled class of wave-type partial differential equations with a significantly wider class of highly oscillatory initial data (of WKB type) is addressed using elaborate numerical and analytical multiscale techniques, \ecl and (ii) the combination of analytical approximations with tailor-made time integration in~\cite{jahnke2024analytical} for a wider class of partial differential equations with highly oscillatory initial data that are real parts of \eqref{eq:init}. \bcl We take a different approach in this paper, which is algorithmically simpler than (i) and (ii) but is devised and studied for the specific problem \eqref{eq:KG}--\eqref{eq:init} for concreteness.

\subsection{Weighted finite differences}
We build on the approach in our previous paper~\cite{shi2025weighted}, where weighted finite difference methods for a nonlinear Schr\"odinger equation with highly oscillatory solutions in space and time were introduced and analyzed. 
\bcl
In this subsection we describe common ground and differences between the previous paper~\cite{shi2025weighted} and the current paper, where numerical methods of the same type are applied to a different equation that poses substantially different challenges. One purpose of the paper is precisely to show that the weighted finite difference approach is not limited to the semiclassical nonlinear Schr\"odinger equation, but can be adapted to much larger classes of dispersive problems, of which the nonlinear Klein--Gordon equation studied here is a prominent example. While the adaptation of the algorithm is done easily (which we consider a big plus), the reformulation of the problem, which is necessary before applying the numerical discretization, and the error analysis pose new challenges that we address in this paper.

Weighted finite difference methods are conceptually simple, are as easy to implement as classical finite difference methods, \bcl and have essentially the same computational cost per time step when using the same spatial grid size. \bcl In contrast to classical finite difference methods, the weighted methods enable us to use large time steps and grid sizes that are not constrained by high wave numbers in space and corresponding high frequencies in time. The key to their success lies in an exact preservation of the dispersion relation in the numerical method, which is achieved by using appropriate exponential weights in the finite-difference scheme. The weights for spatial finite differences depend on the wave number $\kappa$ appearing in the initial data, whereas the weights for temporal finite differences depend on the associated frequency, which is determined by the dispersion relation. 

In this paper we study the nontrivial transfer and adaptation of the weighted finite difference approach of \cite{shi2025weighted} to the Klein--Gordon equation \eqref{eq:KG} with initial data \eqref{eq:init}. 
A challenging new feature compared to \cite{shi2025weighted} is the appearance of
two dispersion branches of the frequencies $\pm\omega=\pm \sqrt{1+\kappa^2}$. This leads us to an $\mathcal{O}(\eps)$-approximate decomposition of the solution of \eqref{eq:KG} with general initial data \eqref{eq:init} as a superposition of two counterpropagating solutions with polarized initial data, each of which is a uni-directional wave packet up to $\mathcal{O}(\eps^2)$ perturbations. The two polarized solutions, which move with the fast group velocities $\pm (\kappa/\omega) \eps^{-1}$, are computed in co-moving coordinates. We use a direct weighted finite difference discretization of the two resulting second-order differential equations in co-moving coordinates corresponding to the two opposite group velocities. The use of co-moving coordinates allows us to maintain a relatively small spatial computational interval while capturing the original solution across the entire domain. The separate approximation of polarized solutions is of major interest, not least because the time-span of their non-negligible overlap is only of size $\mathcal{O}(\eps)$. The weighted finite difference method for approximating polarized solutions is not only asymptotic-preserving as $\eps \to 0$ but also uniformly accurate for $0<\eps\le 1$, which is due to the fact the the weighted method turns into a standard co-moving finite difference scheme when the ratios of the time step and mesh size over $\eps$ tend to zero.

While the envelope of a polarized solution of the Klein--Gordon equation is, up to $\mathcal{O}(\eps)$, determined from a nonlinear Schrödinger equation, the envelope for a single-frequency solution of the semiclassical nonlinear Schrödinger equation in \cite{shi2025weighted} is, up to $\mathcal{O}(\eps)$, determined by an advection equation. These differences are reflected in the stability  analysis of the corresponding numerical methods, leading to different CFL-type conditions for the weighted leapfrog-type explicit methods. For this second-order differential equation we also require a different stability analysis for the weighted Crank--Nicolson-type implicit method.

A common feature with \cite{shi2025weighted} in the error analysis is the functional-analytic framework using the Wiener algebra, as is familiar in geometric optics, though less so in numerical analysis. This framework allows us to handle polynomial nonlinearities in a rather comfortable way, yielding maximum norm error bounds with minimal effort. As in \cite{shi2025weighted}, a key feature of the numerical method is again the exact preservation of the dispersion relation.
\ecl

\subsection{Outline}
Section~\ref{sec:exact} presents an asymptotic analytical approximation of the continuous problem for small $\eps$, which is of first order accuracy in $\eps$ for general initial data \eqref{eq:init}. 
\bcl
This is improved to second order in $\eps$ for polarized initial data, for which the wave is uni-directional. From these results we conclude that the solution of \eqref{eq:KG} with arbitrary (non-polarized) initial data \eqref{eq:init} can be approximated, to first order in $\eps$, by the superposition of two solutions with polarized initial data. The equations for the polarized solutions are then rewritten in co-moving coordinates. These are the  equations that are discretized in this paper.

In Section~\ref{sec:algo}, we describe the algorithm studied in this paper, which is applied to the Klein--Gordon equation \eqref{eq:KG} with polarized initial values, written in co-moving coordinates. 
\ecl
We introduce a weighted finite difference method that extends the classical explicit leapfrog method in time with central finite differences in space to the case of an arbitrarily small $\eps$ in \eqref{eq:KG}--\eqref{eq:init}, without any restriction of the time step size $\tau$ and the spatial grid size $h$ by the small parameter $\eps$. We further present an analogous extension of the implicit Crank--Nicolson method.

Section~\ref{sec:main} states the main results of the paper. We derive the leading term in the modulated Fourier expansion of the numerical solution and establish second order error bounds that allow time step sizes $\tau$ and mesh widths $h$ to be chosen arbitrarily large compared to $\eps$. For $h \gg \eps$, the explicit method imposes a step-size restriction of the form $\tau \le c h^2$, which is not required for the implicit method. For both methods we have $\mathcal{O}(h^2+\tau^2+\eps)$ error bounds for general initial data \eqref{eq:init}, which improve to $\mathcal{O}(h^2+\tau^2+\eps^2)$ for polarized initial data.

The results stated in Section~\ref{sec:main} are proved in Sections~\ref{sec:consistency} and~\ref{sec:stability}. In Section~\ref{sec:consistency}, we analyze the consistency error, i.e., the defect arising when inserting a function with a controlled small distance to the exact solution into the numerical scheme. Section~\ref{sec:stability} presents a linear Fourier stability analysis, followed by a nonlinear stability analysis that bounds the numerical solution error in terms of the defect. Section~\ref{sec:num} presents the numerical experiments.

\subsection{Extensions}
We add a remark on direct generalizations of the numerical methods and their analysis as studied here: 
\\
-- While we consider the nonlinear Klein--Gordon equation with complex initial data \eqref{eq:init}, the numerical methods and theoretical results of this paper extend with minor modifications to the real case where the complex initial data in \eqref{eq:init} are replaced by their real parts. 
\\
-- Like in \cite{shi2025weighted}, the numerical methods and their theory can be extended to initial data that are sums of modulated exponentials with different wave numbers $\kappa_1,\dots,\kappa_m$. Such an extension is easily obtained when an $\mathcal{O}(h^2+\tau^2+\eps)$ error bound is desired. \bcl On the other hand, achieving second order in $\eps$ would require a more substantial analytical and computational effort, already in the case of a single wave number. 
\\
\bcl
-- The extension of the weighted finite difference {\it algorithm} to higher space dimensions in the Klein--Gordon equation is straightforward. Extending the error analysis to higher dimensions may be feasible along similar lines to the one-dimensional case, not least in view of the dimension-independent Wiener algebra framework. Such an extension is expected but will not be treated here.
\ecl

\section{Preparation: Dominant oscillatory terms, polarized initial values, superposition of polarized solutions, and co-moving coordinates}\label{sec:exact}
Before introducing the numerical method, we give approximation results for the solution of \eqref{eq:KG}--\eqref{eq:init}.
We distinguish between general initial data \eqref{eq:init} and polarized initial data, which yield an essentially uni-directional wave propagation. \bcl We show that a general solution is approximated, up to $\mathcal{O}(\eps)$, by the superposition of two polarized solutions. \ecl
We further present a reformulation of the equation in co-moving coordinates, on which the numerical discretization will be based.

\subsection{Solution approximation for general initial data}
\label{subsec:u-approx}

Up to an $\mathcal{O}(\eps)$ error, the following result for oscillatory initial data \eqref{eq:init} with wave number $\kappa/\eps$ yields bi-directional wave propagation having frequencies $\pm \omega/\eps^2$ with $ \omega = \sqrt{1 + \kappa^2} $ for the nonlinear Klein--Gordon equation \eqref{eq:KG} with oscillatory initial data \eqref{eq:init}, in accordance with the dispersion relation 
$\eps^2(\iu \omega/\eps^2)^2 - (\iu \kappa/\eps)^2 +1/\eps^2=0$, i.e.
$\omega^2=\kappa^2 +1$, of the linear Klein--Gordon equation.
We often refer to $\kappa$ and $\omega$ as the wave number and frequency instead of $\kappa/\eps$ and $\omega/\eps^2$, respectively.

\begin{proposition}[Dominant terms of the solution]
\label{prop} \bcl
Let $u(t,x)$ be the solution of  equation \eqref{eq:KG} with $0<\eps\ll 1$ for $0\le t \le T$ with oscillatory initial data \eqref{eq:init} having wave number $\kappa\ne 0$ and smooth initial profiles $a_0$, $b_0$ of bounded support. Then there exists $c$ independent of $\eps$ such that
\ecl
    \[
    \begin{aligned}
    \|u(t,\cdot) - \wt{u}(t,\cdot)\|_{L^{\infty}} &\leq c\, \eps, \quad 0 \leq t \leq T,\\
    \eps^2\|\partial_tu(t,\cdot) - \partial_t\wt{u}(t,\cdot)\|_{L^{\infty}} &\leq c\, \eps, \quad 0 \leq t \leq T,
    \end{aligned}
    \]
where $ \wt{u} $ has the form
\begin{equation}\label{eq:MFE-exact}
    \wt{u}(t,x) = a^{+}(t, \xi) \e^{\iu (\kappa x - \omega t/\eps)/\eps} + a^{-}(t, \eta) \e^{\iu (\kappa x + \omega t/\eps)/\eps}
\end{equation}
with frequency $ {\omega = \sqrt{1 + \kappa^2}} $, group velocity $ c_g = \partial_\kappa \omega = \kappa/\omega$ \bys having \eys $|c_g|<1$,
and with co-moving coordinates $ \xi = x - c_g t/\eps$, $\eta = x + c_g t/\eps $. The functions $ a^{+}(t, \xi) $ and $ a^{-}(t, \eta) $ are independent of $\eps$ and satisfy the following separated nonlinear Schrödinger equations:
\begin{equation}\label{eq:a+a-}
\begin{aligned}
   2\iu \omega \partial_t a^{+} &= 
    -(1-c_g^2) \, \partial^2_{\xi} a^{+} + \lambda |a^{+}|^2 a^{+},\\ a^{+}(0, \xi) &= \tfrac12 \bigl(a_0(\xi) + \iu b_0(\xi)/\omega \bigr), \\[2mm]
    -2\iu \omega \partial_t a^{-}  &=
    -(1-c_g^2) \, \partial^2_{\eta} a^{-} + \lambda |a^{-}|^2 a^{-}, \\
    a^{-}(0, \eta) &= \tfrac12 \bigl(a_0(\eta) - \iu b_0(\eta)/\omega \bigr).
\end{aligned}
\end{equation}
\end{proposition}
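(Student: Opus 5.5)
We prove the proposition by the standard modulated-Fourier/geometric-optics route: build an approximate solution with an $\mathcal{O}(\eps)$-small defect, then control the error by a stability estimate for the Klein--Gordon equation in the Wiener algebra, in the spirit of the framework announced in the introduction. First we insert the ansatz $\wt u$ from \eqref{eq:MFE-exact} into \eqref{eq:KG}. For the term $a^+(t,x-c_gt/\eps)\e^{\iu(\kappa x-\omega t/\eps)/\eps}$, collecting powers of $\eps$ gives the following. At order $\eps^{-2}$ the coefficient is $-\omega^2+\kappa^2+1=0$, which is the dispersion relation. At order $\eps^{-1}$ the coefficient is $2\iu\omega c_g\partial_\xi a^+-2\iu\kappa\partial_\xi a^+=0$, since $c_g=\kappa/\omega$; this explains the co-moving coordinate. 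At order $\eps^0$ the terms $-2\iu\omega\partial_ta^+ + c_g^2\partial_\xi^2a^+-\partial_\xi^2a^+$ appear, together with the resonant cubic part $\lambda|a^+|^2a^+$ of $\lambda|\wt u|^2\wt u$. Setting this to zero gives the first equation in \eqref{eq:a+a-}, and the analogous computation gives the one for $a^-$. The remaining contributions are $\eps^2\partial_t^2 a^\pm$, $-2\eps c_g\partial_t\partial_\xi a^\pm$, and the nonresonant cross terms of the cubic nonlinearity, such as $2|a^+|^2a^-$ and $(a^+)^2\overline{a^-}\,\e^{\iu(\kappa x-3\omega t/\eps)/\eps}$. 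The initial values follow from matching $u(0)=a^++a^-=a_0$ and $\eps^2\partial_tu(0)=-\iu\omega(a^+-a^-)+\mathcal{O}(\eps)=b_0$, which gives $a^\pm(0)=\tfrac12(a_0\pm\iu b_0/\omega)$. The NLS equations are globally well posed in one dimension, so $a^\pm$ are smooth on $[0,T]$ with all needed derivatives in the Wiener algebra, uniformly in $\eps$; hence the $\eps$- and $\eps^2$-terms of the defect are harmless.

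The main obstacle is that the cross-interaction terms are $\mathcal{O}(1)$ in the defect rather than small. We handle them in one of two ways, and we expect to use both. The first is to observe that $a^+(t,\xi)$ and $a^-(t,\eta)$ are centred at points separated by $2c_gt/\eps$, so their product is small once $t\gg\eps$. Localization alone is not enough for the smooth tails, however. The robust route is to exploit the fact that these terms oscillate in time, with non-resonant frequencies such as $\pm3\omega/\eps^2$ at wave number $\kappa/\eps$, or at wave number $\kappa/\eps$ with frequency $\omega/\eps^2$ but a mismatched co-moving velocity. Under Duhamel's formula, such a term produces only an $\mathcal{O}(\eps)$ contribution. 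Concretely, we add first-order corrector terms $\eps\,b(t,x)\e^{\iu(\cdots)}$, or we integrate by parts in time in the Duhamel integral, using that the symbol $\eps^{-2}\sqrt{1+\eps^2k^2}$ of the linear propagator differs from the cross-term frequency by $\gtrsim \eps^{-2}$ for the relevant wave numbers, or by $\gtrsim\eps^{-1}$ times the velocity mismatch. This reduces the defect to $\mathcal{O}(\eps)$ in a weak (integrated) sense. That is exactly why only $\mathcal{O}(\eps)$, not $\mathcal{O}(\eps^2)$, is claimed for general data.

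Finally, we write the error $e=u-\wt u$ as the solution of the first-order system for $(e,\eps^2\partial_te)$. The linear Klein--Gordon group is an isometry in the Fourier-weighted norm with weight $\langle\eps k\rangle$, and it is bounded on the Wiener algebra $\{\hat f\in L^1\}$, which embeds into $L^\infty$. The cubic nonlinearity is locally Lipschitz there, with constant depending on $\|u\|,\|\wt u\|$. A Gronwall argument in Duhamel form, with factor $\eps^{-2}$ in front of the source balanced by the propagator's $\eps^2$ scaling, together with a bootstrap on $\|e\|\le1$, yields $\|e(t)\|+\eps^2\|\partial_te(t)\|\le c\eps$ on $[0,T]$. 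This gives both stated bounds.
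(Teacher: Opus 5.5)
Your framework matches the paper's. You insert the ansatz, show the defect is $\mathcal{O}(\eps)$, and control the error through a first-order system whose Fourier symbol generates a unitary group. The Wiener norm of the error is then bounded by the initial error plus the time integral of the defect, and Gronwall with a bootstrap closes the argument.

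The gap is in the step you yourself call the main obstacle: the $\mathcal{O}(1)$ cross terms are never actually reduced, and the mechanism you call robust fails for the cross-phase-modulation terms. Take $2\lambda|a^-(t,\eta)|^2a^+(t,\xi)\,\e^{\iu(\kappa x-\omega t/\eps)/\eps}$, and let $\theta_1,\theta_2$ be the Fourier variables of $|a^-|^2$ and $a^+$. At wave number $\kappa/\eps+\theta_1+\theta_2$ this source oscillates with frequency $\omega/\eps^2+c_g(\theta_2-\theta_1)/\eps$. The propagator frequency there is $\omega/\eps^2+c_g(\theta_1+\theta_2)/\eps+\mathcal{O}(1)$ for bounded $\theta_1,\theta_2$.

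The mismatch is therefore $-2c_g\theta_1/\eps+\mathcal{O}(1)$. That is $\eps^{-1}$ times the velocity mismatch \emph{times} $\theta_1$, and it is not bounded below: the mean value of $|a^-|^2$ is exactly resonant. Integrating by parts in the Duhamel integral produces $\widehat{|a^-|^2}(\theta_1)/\theta_1$, which is not integrable at $\theta_1=0$ because $\int|a^-|^2\,\d\eta\neq 0$. So no Wiener-norm bound results.

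Your other option, correctors $\eps\,b(t,x)\,\e^{\iu(\cdots)}$, is left unspecified, and its stated form does not fit. The third harmonics need correctors of size $\eps^2$, because the linear symbol at frequency $3\omega$ equals $\eps^{-2}(\kappa^2+1-9\omega^2)\neq 0$. For the cross-phase-modulation terms, the corrector must depend on both $\xi$ and $\eta$ and solve a transport equation along the opposite characteristic, which you never write down.

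The paper closes this step with the explicit ansatz $u_{\text{MFE}}$. Its correctors $\eps b^\pm(t,\xi,\eta)$ solve $4\iu\kappa\partial_\eta b^+=2\lambda|a^-|^2a^+$ and $4\iu\kappa\partial_\xi b^-=2\lambda|a^+|^2a^-$. For instance, $b^+=\frac{\lambda}{2\iu\kappa}\,a^+(t,\xi)\int_{-\infty}^{\eta}|a^-(t,s)|^2\,\d s$ is bounded but not decaying in $\eta$; this is the physical-space counterpart of the $1/\theta_1$ singularity. It still lies in $A(\R)$ as a function of $x$, thanks to the factor $a^+(t,\xi)$. The third harmonics are removed by the $\eps^2$-terms $-\eps^2\lambda(\kappa^2-9\omega^2+1)^{-1}\bigl((a^+)^2\overline{a^-}\,\e^{\iu(\kappa x-3\omega t/\eps)/\eps}+\overline{a^+}(a^-)^2\,\e^{\iu(\kappa x+3\omega t/\eps)/\eps}\bigr)$. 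The defect is then $\mathcal{O}(\eps)$ in $A(\R)$ pointwise in $t$.

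Note also that the localization route you dismissed does work here and is simpler. Both packets emanate from the same profile and separate with relative speed $2c_g/\eps$. Using the polynomial decay of $a^\pm$ and their derivatives (Remark~\ref{rem:regularity}) together with \eqref{eq:A-inv} and \eqref{eq:katz}, every cross term has $A(\R)$-norm at most $C\langle c_g t/\eps\rangle^{-2}$. Its integral over $[0,T]$ is $\mathcal{O}(\eps)$, which is all that \eqref{eq:lin-est} and Gronwall require, so no correctors are needed. The remaining terms, such as $\eps^2\partial_t^2a^\pm$ and $\eps c_g\partial_t\partial_\xi a^+$, are $\mathcal{O}(\eps)$ anyway.

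One of these two arguments must be supplied. As written, the proposal does not establish the $\mathcal{O}(\eps)$ defect bound.
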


We observe that the equation for $a^-$ is obtained by replacing $\omega$ by $-\omega$ and consequently $c_g=\kappa/\omega$ by $-c_g$ in the equation for $a^+$. Note that the velocity behaves as $\partial_t u = \mathcal{O}(\eps^{-2})$ so that the above error bound corresponds to the relative error.

\bcl
\begin{remark}\label{rem:regularity}
    The solutions $a^+(t,\cdot)$ and $a^-(t,\cdot)$ of the nonlinear Schr\"odinger equations \eqref{eq:a+a-} (having smooth compactly supported initial data by assumption) are smooth and decay in space with a polynomial rate of arbitrarily high degree; cf.~\cite{hayashi1986solutions,hayashi1988nonlinear,nahas2009persistent}.
    This decay property is useful in the context of this paper in two ways: Analytically, it yields the desired defect bounds in the Wiener algebra norm in the proofs of Proposition~\ref{prop} and Lemma~\ref{lem:defect-wiener}. Numerically, it indicates that truncating the domain in the co-moving coordinates $\xi$ and $\eta$ can be justified (though in practice not by the available {\it a priori} estimates, but by monitoring the decay of the numerical solutions).
\end{remark}
\ecl

The proof works in the functional setting of the Wiener algebra $A(\R)$; see, e.g., \cite{katznelson76} for properties of the Wiener algebra and \cite{colin2009short} for its use in highly oscillatory nonlinear hyperbolic equations. The space $A(\R)$ consists of all complex-valued functions $f:\mathbb{R}\to\mathbb{C}$ having a Fourier transform 
$\wh f$ that is Lebesgue-integrable, i.e., $\wh f \in L^1(\mathbb{R})$. 
$A(\mathbb{R})$ is a Banach space with the norm
$$
\| f \|_{A(\mathbb{R})} = \| \widehat f \|_{L^1(\mathbb{R})} = \int_{\mathbb{R}} |\widehat f(\theta)|\, \d\theta.
$$
The pointwise product of two functions $f,g\in A(\mathbb{R})$ is bounded by 
\begin{equation}\label{eq:alge}
\| fg \|_{A(\mathbb{R})} \le \| f \|_{A(\mathbb{R})} \,\| g \|_{A(\mathbb{R})},
\end{equation}
which makes $A(\mathbb{R})$ a Banach algebra. Moreover, the $A(\R)$-norm is invariant under multiplication of the function with an exponential $\e^{\iu \mu x}$:
\begin{equation}\label{eq:A-inv}
\| f \|_{A(\mathbb{R})} = \| g \|_{A(\mathbb{R})}
\quad\text{ for 
$\ g(x) = f(x) \e^{\iu \mu x}\ $ with arbitrary $\mu\in\R$},
\end{equation}
since $\wh g(\theta)=\wh f(\theta-\mu)$.
The maximum norm of a function in $A(\mathbb{R})$ is bounded by its $A(\mathbb{R})$-norm, and conversely, the $A(\mathbb{R})$-norm is bounded by the $L^1$-norm of the function and its derivative:
    \begin{equation}\label{eq:katz}
    \| f \|_{L^\infty(\mathbb{R})} \le \| f \|_{A(\mathbb{R})} \quad\text{ and }\quad
    \| f \|_{A(\mathbb{R})} \le c_1\,\bigl(\| f \|_{L^1(\mathbb{R})}+\| f' \|_{L^1(\mathbb{R})}\bigr).
    \end{equation}

\begin{proof} \bcl (a) {\it Defect bound.} \ecl
    Rather than directly estimating the error between $ u $ and $ \wt{u} $, we introduce a higher-order approximation as in \cite{pierce1995validity}:
    \[
    \begin{aligned}
    u_{\text{MFE}}(t,x) &= \left(a^{+} + \eps b^{+}\right) \e^{\iu (\kappa x - \omega t/\eps)/\eps} + \left(a^{-} + \eps b^{-}\right) \e^{\iu (\kappa x + \omega t/\eps)/\eps} \\
    & \quad - \frac{\eps^2 \lambda}{\kappa^2 - 9 \omega^2 + 1} \left( (a^{+})^2 \overline{a^{-}} \e^{\iu (\kappa x - 3 \omega t/\eps)/\eps} + \overline{a^{+}} (a^{-})^2 \e^{\iu (\kappa x + 3 \omega t/\eps)/\eps} \right),
    \end{aligned}
    \]
    where \bys $\overline{\,\cdot\,}$ denotes the complex conjugate, \eys $a^{+} = a^+(t, \xi) $, $ a^{-} = a^-(t, \eta) $ satisfy \eqref{eq:a+a-}, and $b^+=b^+(t, \xi, \eta)$ , $b^-=b^-(t, \xi, \eta)$ satisfy the following equations
    \begin{equation}\label{eq:b+b-}
    4\iu\kappa\partial_{\eta}b^+=2\lambda|a^-|^2a^+, \quad 4\iu\kappa\partial_{\xi}b^-=2\lambda|a^+|^2a^-.
    \end{equation}
    The defect obtained on inserting $ u_{\text{MFE}} $ into \eqref{eq:KG} is 
    \begin{equation}\label{eq:defectU}
        d(t,x)=\eps^2\partial^2_t u_{\text{MFE}} - \partial_x^2 u_{\text{MFE}} + \frac{1}{\eps^2}u_{\text{MFE}} + \lambda |u_{\text{MFE}}|^2 u_{\text{MFE}}.
    \end{equation}
    Using the expression of $u_{\text{MFE}}$, we have
    \[
    \begin{aligned}
    d(t,x) =&\Bigl(-2\iu\omega\partial_t a^+ -(1-c_g^2)\partial^2_{\xi}a^+-4\iu\kappa\partial_{\eta}b^+\Bigr.
    \\
    &\hskip 2.2cm \Bigl.+\lambda(|a^+|^2a^++2|a^-|^2a^+)\Bigr)\e^{\iu (\kappa x - \omega t/\eps)/\eps}
    \\
    +&\Bigl(2\iu\omega\partial_t a^--(1-c_g^2)\partial^2_{\eta}a^--4\iu\kappa\partial_{\xi}b^-\Bigr.
    \\
    &\hskip 2.2cm \Bigl.+\lambda(|a^-|^2a^-+2|a^+|^2a^-)\Bigr)\e^{\iu (\kappa x + \omega t/\eps)/\eps}\\
    +&\left(\frac{(9\omega^2-\kappa^2-1)\lambda}{\kappa^2 - 9 \omega^2 +1}+\lambda\right)(a^{+})^2 \overline{a^{-}}\e^{\iu (\kappa x - 3\omega t/\eps)/\eps}\\
    +&\left(\frac{(9\omega^2-\kappa^2-1)\lambda}{\kappa^2 - 9 \omega^2 +1 }+\lambda\right)(a^{-})^2 \overline{a^{+}}\e^{\iu (\kappa x + 3\omega t/\eps)/\eps}+
    \bcl
    R,
    \ecl
    \end{aligned}
    \]
    \bcl
    where $R$ is a finite sum of oscillatory exponentials modulated by fast decaying smooth functions, multiplied with an extra factor $\eps^k$ with $k\ge 1$. Using \eqref{eq:A-inv} and \eqref{eq:katz}, we obtain uniformly for $t\in[0,T]$
    $$
    \| R(t,\cdot) \|_{A(\R)} \le c \eps,
    $$
    where $c$ is independent of $\eps$.
    Since $a^{\pm}$ and $b^{\pm}$ satisfy equations \eqref{eq:a+a-} and \eqref{eq:b+b-}, the first two terms in the above formula for $d(t,x)$ vanish, and the third and fourth term have a vanishing prefactor. So we obtain $d=R$ and hence, uniformly for $t\in[0,T]$,
    \[
    \|d(t,\cdot)\|_{A(\R)} = \| R(t,\cdot) \|_{A(\R)} \le c\eps.
    \]
    (b) {\it Error equation.} 
    \ecl
    Comparing equation \eqref{eq:defectU} with \eqref{eq:KG} shows that the error $e=u-u_{\text{MFE}}$ solves the evolution equation
     \[
     \eps^2\partial^2_t e-\partial_x^2 e + \frac{1}{\eps^2}e+\lambda(|u|^2u-|u_{\text{MFE}}|^2u_{\text{MFE}}) + d=0.
     \]
     We rewrite this equation as a first-order system of the form and scaling as studied in a Wiener algebra setting in \cite{colin2009short},
     \begin{align*}
     \partial_t\begin{pmatrix}
         e\\s\\z
     \end{pmatrix}
     &=\frac{1}{\eps}A(\partial_x)
     \begin{pmatrix}
         e\\s\\z
     \end{pmatrix}-\frac{1}{\eps^2}E
     \begin{pmatrix}
         e\\s\\z
     \end{pmatrix}
      -\begin{pmatrix}
         0\\\lambda(|u|^2u-|u_{\text{MFE}}|^2u_{\text{MFE}}) + d\\0
     \end{pmatrix},
     \end{align*}
     where $s=\eps^2\partial_te$, $z=\eps\partial_x e$ and
\[
A(\partial_x)=\begin{pmatrix}
        0 & 0& 0\\
        0 & 0& \partial_x\\
        0 & \partial_x& 0
    \end{pmatrix},\quad 
    E=\begin{pmatrix}
        0 &-1 & 0\\
        1 &0 & 0\\
        0 & 0 &0
    \end{pmatrix}.
\]
The initial data $e(0)$, $s(0)$, and $z(0)$ are all of order $\mathcal{O}(\eps)$ with respect to the Wiener norm. 

\bcl 
(c) {\it Stability: bounding the error in terms of the defect.} Similar arguments to the following were used in
\cite{colin2009short} for analyzing first-order hyperbolic systems using the Wiener algebra norm. For the convenience of the reader we give a self-contained proof.
We consider the linear first-order hyperbolic equation for $v=(e,s,z)^\top$ 
with an inhomogeneity $g\in C([0,T],A(\R)^3)$,
$$
\partial_t v = \frac{1}{\eps}A(\partial_x) v -\frac{1}{\eps^2}E v + g,
$$
and take the Fourier transform (evaluated at $\theta\in\R$)
$$
\partial_t \wh v = \Bigl(\frac{1}{\eps}A(i\theta) -\frac{1}{\eps^2}E \Bigr) \wh v + \wh g.
$$
The matrix on the right-hand side
is skew-Hermitian, and hence its exponential is unitary. Using the variation-of-constants formula, it follows that (with $|\cdot|$ denoting the Euclidean norm)
$$
| \wh v (t,\theta) | \le | \wh v (0,\theta) |  + \int_0^t |\wh g(t',\theta)| \, \d t',
\qquad 0\le t \le T, \ \theta\in \R,
$$
which implies (omitting the power 3 on $A(\R)$)
\begin{equation} \label{eq:lin-est}
  \| v(t,\cdot) \|_{A(\R)} \le \| v(0,\cdot) \|_{A(\R)} 
  + \int_0^t \| g(t',\cdot)\|_{A(\R)} \, \d t',\qquad 0\le t \le T.  
\end{equation}
We use this bound with (omitting the argument $t'$)
$$
\| g \|_{A(\R)} = \bigl\| \lambda(|u|^2u-|u_{\text{MFE}}|^2u_{\text{MFE}}) + d \, \bigr\|_{A(\R)}.
$$
By construction of $u_{\text{MFE}}$ and using \eqref{eq:alge} -- \eqref{eq:katz},  we have
$$
\| u_{\text{MFE}}(t,\cdot) \|_{A(\R)} \le M, \quad\ 0\le t \le T.
$$ 
As long as $\| u(t,\cdot) \|_{A(\R)} \le 2M$, we thus have by \eqref{eq:alge}, with $C=12|\lambda|M^2$,
$$
\| g \|_{A(\R)} \le C \| e \|_{A(\R)} + \| d \|_{A(\R)}.
$$
Inserting this bound into \eqref{eq:lin-est} and using the Gronwall inequality, we obtain with $C_T=\e^{CT}$,
$$
\| v(t, \cdot) \|_{A(\R)} \le C_T \| v(0, \cdot) \|_{A(\R)} + C_T \int_0^t \| d(t', \cdot) \|_{A(\R)} \, \d t' \le C_T (c_0 + cT) \eps.
$$
For sufficiently small $\eps$, this bound is smaller than $2M$, and hence the bound remains valid for $0\le t \le T$. We further note that $\| e\|_{A(\R)} \le \| v\|_{A(\R)^3}$.
This error bound in the Wiener norm then yields the same bound in the weaker maximum norm, as stated in the proposition.
\qed
\end{proof}

\begin{remark}
    Similar results can be found in \cite{Kirrmann_Schneider_Mielke_1992,pierce1995validity}, where a different scaling is considered. Notably, using the Wiener algebra norm yields a sharper bound, improving the order of accuracy by $1/2$ compared to the results in \cite{pierce1995validity}. 
\end{remark}

\subsection{Solution approximation for polarized initial data}\label{sec:polarized} 
In the special case of polarized initial data (see~\cite{colin2009short,baumstark2024polarized}), 
the solution essentially propagates in a single direction. To derive such initial data, we first rewrite equation \eqref{eq:KG} as a first-order system of the type studied in \cite{colin2009short}, with $A(\partial_x)$ and $E$ as in the proof above,
\begin{equation}\label{eq:hyp-1}
    \partial_t\begin{pmatrix}
        u\\v\\w
    \end{pmatrix}=\frac{1}{\eps}A(\partial_x)
    \begin{pmatrix}
        u\\v\\w
    \end{pmatrix}-\frac{1}{\eps^2}E\begin{pmatrix}
        u\\v\\w
    \end{pmatrix}-\begin{pmatrix}
        0\\\lambda|u|^2u\\0
    \end{pmatrix}, 
\end{equation}
by introducing $v=\eps^2\partial_t u$ and $w=\eps\partial_x u$. Consider the hermitian matrix
\[
-A(\kappa)-\iu E=\begin{pmatrix}
        0 & \iu& 0\\
        -\iu & 0& -\kappa\\
        0 & -\kappa& 0
    \end{pmatrix},
\]
which has the three eigenvalues $0,\pm\sqrt{1+\kappa^2}$. Let $\omega$ be one of the non-zero eigenvalues and 
$\nu=(1,-\iu\omega,\iu\kappa)^\top$ the corresponding eigenvector. 

Initial data 
$$
    \quad \begin{pmatrix}
        u(0,x)\\v(0,x)\\w(0,x)
    \end{pmatrix}=\begin{pmatrix}
        u_0(x)\\v_0(x)\\w_0(x)
    \end{pmatrix}
    \quad\text{ with }\quad
    u_0(x)=a_0(x)\e^{\iu\kappa x/\eps}
$$
for \eqref{eq:hyp-1} are called {\it polarized initial data of the first-order system \eqref{eq:hyp-1}} if $(u_0(x),v_0(x),w_0(x))^\top$ is in the span of the eigenvector $\nu$ for all $x\in\R$.
Corresponding to $u_0(x)=a_0(x)\e^{\iu\kappa x/\eps}$, we thus have the initial data
\[
v_0(x)=-\iu\omega a_0(x)\e^{\iu\kappa x/\eps}, \quad w_0(x)=\iu\kappa a_0(x)\e^{\iu\kappa x/\eps}.
\]
On the other hand, by definition of $u_0(x)$ we have
\[
\eps \,\partial_x u_0(x) = \iu\kappa a_0(x)\e^{\iu\kappa x/\eps} + \eps\, \partial_x a_0(x) \e^{\iu\kappa x/\eps}\neq w_0(x) 
\quad\text{ in general}.
\]
To resolve this apparent contradiction and obtain initial data that are both polarized and fully consistent, 
we use the co-moving coordinate
\[
\xi = x - c_g t / \eps, \quad p(t,\xi) = u(t,x),
\]
and consider the equation for the transformed variable $p$,
\begin{equation}\label{eq:p}
\eps^2 \partial^2_t p - (1-c_g^2) \partial^2_{\xi} p - 2 \eps c_g \partial_t \partial_{\xi} p + \frac{1}{\eps^2} p + \lambda |p|^2 p = 0, 
\end{equation}
This second-order system is equivalent to the original equation \eqref{eq:KG}. 

Corresponding to the polarized initial data $(u_0,v_0,w_0)^\top$  of the first-order system \eqref{eq:hyp-1}, we define {\it polarized initial data of the second-order system \eqref{eq:p}} as
\begin{equation}\label{eq:pola}
p(0,\xi)=u_0(\xi), \quad\  
\eps^2\partial_t p(0,\xi) = v_0(\xi) +  c_g w_0(\xi)=(-\iu\omega+\iu\kappa c_g)p(0,\xi).
\end{equation}
Since we have the relation
\[
\partial_t p(t,\xi) = \partial_t u(t,x) + \frac{c_g}{\eps} \partial_x u(t,x),
\]
we recover a consistent initial time derivative for $u$ by
\[
\begin{aligned}
\eps^2\partial_t u(0,x) &= \eps^2\partial_t p(0,x) - \eps c_g \partial_x u_0(x)\\
&=v_0(x)+ c_g(w_0(x)-\eps\partial_x u_0(x))\\
&=-\iu\omega a_0(x)\e^{\iu\kappa x/\eps}-\eps c_g\partial_x a_0(x)\e^{\iu\kappa x/\eps},
\end{aligned}
\]
where we use the above formula for $\partial_x u_0(x)$. We thus derive the initial data 
\begin{equation}\label{eq:init-pola}
u(0,x) = a_0(x)\e^{\iu \kappa  x / \eps},\quad \partial_t u(0,x) = -\frac{1}{\eps^2}(\iu\omega a_0(x)+\eps c_g\partial_x a_0(x))\e^{\iu \kappa  x / \eps},
\end{equation}
which we call {\it polarized initial data of the Klein--Gordon equation \eqref{eq:KG}}.

\begin{proposition}[Dominant term in the case of polarized initial data]
\label{prop2} Let $u(t,x)$ be the solution of  equation \eqref{eq:KG} with polarized initial data \eqref{eq:init-pola}. Then there exists a positive constant $c$ such that
    \[
    \begin{aligned}
    \|u(t,\cdot) - \wt{u}(t,\cdot)\|_{L^{\infty}} &\leq c\, \eps^2, \quad 0 \leq t \leq T,\\
    \eps^2\|\partial_tu(t,\cdot) - \partial_t\wt{u}(t,\cdot)\|_{L^{\infty}} &\leq c\, \eps^2, \quad 0 \leq t \leq T,
    \end{aligned}
    \]
where $ \wt{u} $ has the form
\[
    \wt{u}(t,x) = (a(t, \xi)+\eps b(t,\xi)) \e^{\iu (\kappa x - \omega t/\eps)/\eps},
\]
with $ \xi = x - c_g t/\eps$, frequency $ \omega = \sqrt{1 + \kappa^2} $, and the group velocity $ c_g = \partial_\kappa \omega = \kappa/\omega$. The functions $ a(t, \xi) $ and $ b(t, \xi) $ satisfy nonlinear and linear Schrödinger equations, respectively,
\begin{equation}\label{eq:ab}
\begin{aligned}
    2\iu \omega \partial_t a &= - (1-c_g^2) \, \partial^2_{\xi} a + \lambda |a|^2 a, \\
    a(0, \xi) &= a_0(\xi)\\[2mm]
    2\iu\omega\partial_t b &=-(1-c_g^2)\partial_\xi^2b+\lambda (2|a|^2b+a^2\bar{b})-2 c_g\partial_t\partial_\xi a, \\ b(0,\xi)&=0.
\end{aligned}
\end{equation}
\end{proposition}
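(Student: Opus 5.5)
The plan mirrors the proof of Proposition~\ref{prop}: construct a higher-order modulated approximation whose defect in \eqref{eq:KG} is $\mathcal{O}(\eps^2)$ in the Wiener norm, then invoke the stability estimate \eqref{eq:lin-est} with Gronwall. It is convenient to work with $p(t,\xi)=u(t,x)$ in the co-moving equation \eqref{eq:p}. There the ansatz is
\[
p_{\text{MFE}}(t,\xi) = \bigl(a + \eps b + \eps^2 c\bigr)\e^{\iu(\kappa\xi - (\omega-\kappa c_g) t/\eps)/\eps} + \eps^2 q^-\,\e^{\iu(\kappa \xi+(\omega+\kappa c_g)t/\eps)/\eps} + \eps^2 q^{(3)}\,\e^{\iu(\kappa\xi-(3\omega-\kappa c_g)t/\eps)/\eps}.
\]
Here $a,b$ solve \eqref{eq:ab}, and the extra correctors $c,q^-,q^{(3)}$ are chosen only if needed to absorb $\mathcal{O}(\eps)$ terms of the defect. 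In particular, $q^{(3)}$ is the algebraic cubic corrector $\propto \lambda |a|^2a$ at frequency $3\omega$, solved algebraically because $9\omega^2\ne 1+\kappa^2$ is nonresonant. Note that $\e^{\iu(\kappa\xi-(\omega-\kappa c_g)t/\eps)/\eps}$ coincides with the original phase $\e^{\iu(\kappa x-\omega t/\eps)/\eps}$.

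\textbf{Step 1: defect expansion.} Insert $A\,\e^{\iu\phi}$ into \eqref{eq:p} and collect powers of $\eps$. The $\eps^{-2}$ terms vanish by the dispersion relation. The $\eps^{-1}$ terms involve $\partial_\xi A$ with coefficient $2\iu\kappa - 2c_g\iu(\omega-\kappa c_g)\cdot$ (appropriate factors); this coefficient equals $2\iu(\kappa - c_g\omega)=0$ precisely because $c_g=\kappa/\omega$. This is the reason for using co-moving coordinates. The $\eps^0$ terms give the NLS for $a$. The $\eps^1$ terms give the equation for $b$ in \eqref{eq:ab}; its source $-2c_g\partial_t\partial_\xi a$ comes from the mixed derivative term $-2\eps c_g\partial_t\partial_\xi$ acting on $a$, and the linearized cubic term gives $2|a|^2b+a^2\bar b$. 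The cubic nonlinearity also produces a $3\omega$-harmonic at order $\eps^0$. Its contribution is placed at amplitude $\eps^2$ through $q^{(3)}$, with the linear operator applied to it giving $\eps^{-2}\cdot\eps^2$, which cancels it, exactly as in Proposition~\ref{prop}. The remaining terms carry a factor $\eps^2$ times fast-decaying smooth functions (by Remark~\ref{rem:regularity}, also for $b$ and its derivatives). By \eqref{eq:A-inv} and \eqref{eq:katz} their $A(\R)$-norm is $\le c\eps^2$. Unlike the general case, there is no counter-propagating $a^-$, so no $\eps$-order cross terms of type \eqref{eq:b+b-} occur.

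\textbf{Step 2: initial data.} Transforming back, $u_{\text{MFE}}(0)=a_0\e^{\iu\kappa x/\eps}+\mathcal{O}(\eps^2)$ since $b(0)=0$. For the time derivative, $\eps^2\partial_t u_{\text{MFE}}(0) = -\iu\omega a_0\e^{\iu\kappa x/\eps} - \eps c_g \partial_x a_0\,\e^{\iu\kappa x/\eps}+\eps\cdot(\eps\partial_t a)(0)+\dots$. Here $-\eps c_g\partial_\xi a$ arises from $\partial_t$ acting on $a(t,x-c_gt/\eps)$. This matches \eqref{eq:init-pola} up to $\mathcal{O}(\eps^2)$ in $A(\R)$, and this is exactly where polarization is used. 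For general data, the mismatch in $\eps^2\partial_t u$ would be $\mathcal{O}(1)$ in the $a^-$ direction; that mismatch is what produced $a^-$ in Proposition~\ref{prop}. I expect the small correctors $q^-$ (and possibly $c$) to be needed only so that the $\eps^2$-level initial mismatch is accounted for. Alternatively, I would simply take the initial error to be $\mathcal{O}(\eps^2)$ and drop them.

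\textbf{Step 3: stability.} The error $e=u-u_{\text{MFE}}$ satisfies the same first-order system as in part (b) of the proof of Proposition~\ref{prop}, with defect $\|d\|_{A(\R)}\le c\eps^2$ and initial data $\mathcal{O}(\eps^2)$. Applying \eqref{eq:lin-est}, the Lipschitz bound via \eqref{eq:alge}, and Gronwall with the bootstrap argument gives $\|(e,\eps^2\partial_te,\eps\partial_xe)\|_{A(\R)}\le c\eps^2$. Since $u_{\text{MFE}}-\wt u=\mathcal{O}(\eps^2)$ in $A(\R)$, including for $\eps^2\partial_t$, and by \eqref{eq:katz}, the $L^\infty$ bounds follow.

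The main obstacle is Step 1. One must carefully verify that every $\mathcal{O}(\eps^{-1})$ and $\mathcal{O}(1)$ term in the defect, including the $3\omega$ harmonic and the contributions of $\eps b$ hit by the mixed derivative, is either cancelled by \eqref{eq:ab} or pushed to $\mathcal{O}(\eps^2)$ by a corrector. One must also ensure that $\partial_t b$ and $\partial_t\partial_\xi a$ remain bounded in weighted norms, which uses Remark~\ref{rem:regularity}.
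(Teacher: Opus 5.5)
Your route is the paper's. You insert $\wt u=(a+\eps b)\,\e^{\iu(\kappa x-\omega t/\eps)/\eps}$; doing this in the co-moving form \eqref{eq:p} is equivalent, since the Wiener norm is translation invariant. You check that the $\eps^{-2}$, $\eps^{-1}$, $\eps^{0}$ and $\eps^{1}$ coefficients of the defect vanish by $\omega^2=1+\kappa^2$, $c_g=\kappa/\omega$ and \eqref{eq:ab}. You use polarization to get an $\mathcal{O}(\eps^2)$ initial mismatch; in fact $e(0)=0$, $\eps\partial_x e(0)=0$ and $\eps^2\partial_t e(0)=-\eps^2(\partial_t a+\eps\partial_t b)(0,\cdot)\,\e^{\iu\kappa x/\eps}$. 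Finally you rerun the Wiener-algebra stability, Gronwall and bootstrap argument of Proposition~\ref{prop}. Your bookkeeping of those orders and your Steps~2--3 are correct.

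What is wrong is the claim in Step~1 that the cubic term generates a $3\omega$ harmonic. The nonlinearity $|u|^2u$ is gauge invariant, so for a single phase $|A\e^{\iu\phi}|^2A\e^{\iu\phi}=|A|^2A\,\e^{\iu\phi}$ exactly. The third harmonics in Proposition~\ref{prop} are the cross terms $(a^+)^2\overline{a^-}$ and $\overline{a^+}(a^-)^2$. They exist only because two counter-propagating packets are present, which is the same reason you correctly give for the absence of the terms \eqref{eq:b+b-}.

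Taken literally, your corrector $\eps^2q^{(3)}\e^{\iu(\kappa x-3\omega t/\eps)/\eps}$ with $q^{(3)}\propto\lambda|a|^2a$ cancels nothing. Instead it creates the $\mathcal{O}(1)$ defect $(\kappa^2+1-9\omega^2)q^{(3)}=-8\omega^2q^{(3)}$ on that phase, which would destroy the bound $\|d\|_{A(\R)}\le c\eps^2$. Likewise, a corrector $\eps^2q^-(t,\xi)$ on the resonant phase $\e^{\iu(\kappa x+\omega t/\eps)/\eps}$ that depends on the co-moving variable $\xi$ produces the uncancelled $\mathcal{O}(\eps)$ defect $-4\iu\kappa\eps\,\partial_\xi q^-$. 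This is because $\xi$ is the wrong characteristic variable for that branch.

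So all correctors must be zero. Your ``only if needed'' rule does yield this, and $c$ is merely superfluous. Then $p_{\text{MFE}}=\wt u$ and your argument coincides with the paper's proof. The ``main obstacle'' you anticipate also disappears: with $A=a+\eps b$ viewed as a function of $(t,\xi)$, one has exactly $\e^{-\iu(\kappa x-\omega t/\eps)/\eps}d=-2\iu\omega\partial_tA-(1-c_g^2)\partial_\xi^2A+\lambda|A|^2A-2\eps c_g\partial_t\partial_\xi A+\eps^2\partial_t^2A$. Expanding this in powers of $\eps$ gives the $\mathcal{O}(\eps^2)$ defect directly.
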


\begin{proof}
    The defect obtained by inserting $\wt{u}$ into \eqref{eq:KG} is
    \[
    d(t,x) = \varepsilon^2 \partial_t^2 \wt{u} - \partial_x^2 \wt{u} + \frac{1}{\varepsilon^2} \wt{u} + \lambda |\wt{u}|^2 \wt{u}.
    \]
    Using the expression of $\wt{u}$, we compute
    \[
    \begin{aligned}
    &\e^{-\iu(\kappa x- \omega t/\eps)/\eps}\, d(t,x) 
    = \underbrace{- (1-c_g^2) \partial_\xi^2 a - 2\iu \omega \, \partial_t a + \lambda |a|^2 a}_{=0} 
    \\
    &\quad - \eps\underbrace{\left(2 c_g \, \partial_t \partial_\xi a-  (1-c_g^2) \partial_\xi^2 b- 2\iu \omega\,\partial_t b+\lambda (2|a|^2b+a^2\bar{b}\right)}_{=0} +\mathcal{O}(\varepsilon^2),
    \end{aligned}
    \]
    where we have used that all other terms cancel due to the equations for $a$ and~$b$. The remaining steps follow as in the proof of Proposition~\ref{prop}.
\qed\end{proof}

\bcl
\subsection{Approximation by superposition of two polarized solutions}
Combining Propositions \ref{prop} and \ref{prop2}, we obtain the following result on which our discretization will be based.

\begin{proposition}[Approximation of a non-polarized solution by superposition of two polarized solutions]
\label{prop3}
Under the assumptions of Proposition \ref{prop}, the solution $u$ of the Klein--Gordon equation \eqref{eq:KG} with initial values \eqref{eq:init}  can be decomposed as
$$
u(t,x) = u^+(t,x) + u^-(t,x) + r(t,x)   \quad\text{ with } \| r \|_{L^\infty([0,T]\times \R)} \le C\eps,
$$
where $u^\pm$ are solutions of \eqref{eq:KG} with polarized initial data corresponding to the frequencies 
$\pm\omega = \pm \sqrt{1+\kappa^2}$: with $a_0^\pm(x) = \tfrac12 \bigl( a_0(x) \pm \iu b_0(x)/\omega \bigr)$,
$$
u^\pm(0,x) = a_0^\pm(x)\e^{\iu \kappa  x / \eps},\quad \partial_t u^\pm(0,x) = -\frac{1}{\eps^2}(\pm\iu\omega a_0^\pm(x)+\eps c_g\partial_x a_0^\pm(x))\e^{\iu \kappa  x / \eps}.
$$
\end{proposition}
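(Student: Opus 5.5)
We compare three functions through the common approximation $\wt u$ of Proposition~\ref{prop}. By Proposition~\ref{prop}, $\|u(t,\cdot)-\wt u(t,\cdot)\|_{L^\infty}\le c\eps$ on $[0,T]$, where $\wt u = a^+(t,\xi)\e^{\iu(\kappa x-\omega t/\eps)/\eps} + a^-(t,\eta)\e^{\iu(\kappa x+\omega t/\eps)/\eps}$. The functions $a^\pm$ solve \eqref{eq:a+a-} with initial data exactly $a_0^\pm = \tfrac12(a_0\pm\iu b_0/\omega)$. So it suffices to show that $u^+$ is $\mathcal{O}(\eps)$-close to the first term of $\wt u$, and $u^-$ to the second. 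Then we set $r = u - u^+ - u^-$ and use the triangle inequality.

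\textbf{The branch $u^+$.} Its initial data are exactly the polarized data \eqref{eq:init-pola} with profile $a_0^+$ in place of $a_0$. Proposition~\ref{prop2} therefore applies and gives
\[
\|u^+(t,\cdot)-(a(t,\xi)+\eps b(t,\xi))\e^{\iu(\kappa x-\omega t/\eps)/\eps}\|_{L^\infty}\le c\eps^2 .
\]
Here $a$ solves the first equation of \eqref{eq:ab} with $a(0)=a_0^+$, which is the same nonlinear Schr\"odinger equation and initial value as $a^+$ in \eqref{eq:a+a-}. Hence $a=a^+$ by uniqueness. The term $\eps b$ is $\mathcal{O}(\eps)$ in $L^\infty$ uniformly on $[0,T]$. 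This holds because $b$ solves a linear Schr\"odinger equation with smooth, decaying coefficients and source (Remark~\ref{rem:regularity}), so $\|b(t,\cdot)\|_{A(\R)}$ is bounded independently of $\eps$ by \eqref{eq:katz}. Consequently $\|u^+ - a^+(t,\xi)\e^{\iu(\kappa x-\omega t/\eps)/\eps}\|_{L^\infty}\le C\eps$.

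\textbf{The branch $u^-$.} We need the analogue of Proposition~\ref{prop2} for the frequency $-\omega$. This follows by repeating its proof with $\omega\to-\omega$ and $c_g\to-c_g$, using the co-moving variable $\eta = x+c_g t/\eps$; the eigenvalue $-\omega$ of $-A(\kappa)-\iu E$ is the one chosen in Section~\ref{sec:polarized}. One point needs care: the given data contain $-\eps c_g\partial_x a_0^-$ rather than $+\eps c_g\partial_x a_0^-$. This difference only affects the $\mathcal{O}(\eps)$ corrector's initial value, i.e.\ $b(0)\neq0$ possibly. Even if the sign were mismatched, the discrepancy in the initial data is $\mathcal{O}(\eps)$ in the Wiener norm after scaling $\eps^2\partial_t$. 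The stability estimate \eqref{eq:lin-est} with Gronwall, as in part (c) of the proof of Proposition~\ref{prop}, then still yields an $\mathcal{O}(\eps)$ error, which is all we need. Alternatively, we can apply Proposition~\ref{prop} directly to $u^-$ with $b_0 = -\iu\omega a_0^- - \eps c_g\partial_x a_0^-$. Since Proposition~\ref{prop} tolerates $\mathcal{O}(\eps)$ perturbations of $b_0$ by the same stability argument, this gives the amplitudes $\tfrac12(a_0^-+\iu b_0/\omega)=\mathcal{O}(\eps)$ and $\tfrac12(a_0^- - \iu b_0/\omega)=a_0^-+\mathcal{O}(\eps)$. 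Thus $u^-$ is $\mathcal{O}(\eps)$-close to $a^-(t,\eta)\e^{\iu(\kappa x+\omega t/\eps)/\eps}$. This route is the cleanest, and it also handles $u^+$ uniformly.

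\textbf{Main obstacle.} The hard step is making rigorous that Proposition~\ref{prop} is stable under $\mathcal{O}(\eps)$ perturbations of the initial profiles, including the $\eps$-dependent $b_0$. We also need the NLS amplitudes to depend Lipschitz-continuously on their initial data in the Wiener algebra, so that an $\mathcal{O}(\eps)$ change in $a^\pm(0)$ gives an $\mathcal{O}(\eps)$ change in $a^\pm(t)$. The vanishing branch, with amplitude $\mathcal{O}(\eps)$, must contribute only $\mathcal{O}(\eps)$; this follows from the same Gronwall argument. Combining the three $\mathcal{O}(\eps)$ bounds gives $\|r\|_{L^\infty([0,T]\times\R)}\le C\eps$.
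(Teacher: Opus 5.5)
Your argument is correct and is essentially the paper's proof. Both branches are routed through the common approximation $\wt u$ of Proposition~\ref{prop}. Proposition~\ref{prop2} (and its analogue for $-\omega$, $-c_g$) gives $u^\pm = a^\pm\,\e^{\iu(\kappa x\mp\omega t/\eps)/\eps}+\mathcal{O}(\eps)$ after identifying $a=a^\pm$ and discarding the $\eps b$ term, and the triangle inequality concludes.

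You are more careful than the paper in one respect. The stated $\partial_t u^-(0,\cdot)$ has the opposite sign of the $\eps c_g\partial_x a_0^-$ term relative to genuinely polarized data for $-\omega$ (compare \eqref{eq:rescaling-q}), and the paper's proof passes over this. Closing that $\mathcal{O}(\eps)$ mismatch with the Wiener-algebra stability and Gronwall argument of part~(c) of the proof of Proposition~\ref{prop} is valid. So the alternative route via Proposition~\ref{prop} is not needed, and neither is Lipschitz dependence of the NLS amplitudes on their data. In that route, incidentally, $b_0$ should read $\iu\omega a_0^- - \eps c_g\partial_x a_0^-$, which is the sign your subsequent amplitude computation actually uses.
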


\begin{proof}
 We observe that the equation for $a$ in \eqref{eq:ab} (now with $a^+$ and $a^-$ in the role of $a$) is the same equation as that for $a^\pm$ in \eqref{eq:a+a-}. Moreover, we note the consistency of the initial values: 
$$
u(0,x)= a_0(x) \e^{\iu \kappa  x / \eps} = a_0^+(x)\e^{\iu \kappa  x / \eps} + a_0^-(x)\e^{\iu \kappa  x / \eps}
= u^+(0,x) + u^-(0,x)
$$
and
\begin{align*}
&\eps^2 \partial_t u(0,x)= b_0(x)\e^{\iu \kappa  x / \eps} = -\iu \omega a_0^+(x) \e^{\iu \kappa  x / \eps} + \iu \omega a_0^-(x) \e^{\iu \kappa  x / \eps} 
\\
&= \eps^2 \partial_t u^+(0,x) + \eps^2 \partial_t u^-(0,x) + \mathcal{O}(\eps),
\end{align*}
where the $\mathcal{O}$ notation refers to the maximum norm.
Proposition \ref{prop2} shows that 
$$
u^\pm(t,x)= a^\pm(t,\xi)\e^{\iu (\kappa x \mp \omega t/\eps)/\eps} + \mathcal{O}(\eps) \quad \text{ with }\quad \xi=x-c_g t/\eps
$$
with $a^\pm$ from \eqref{eq:a+a-}, and hence Proposition~\ref{prop} yields the result.
\qed
\end{proof}
\ecl
\subsection{Reformulation in co-moving coordinates}\label{subsec:co-moving} 

\bcl
Since the polarized wave packets $u^\pm(t,x)$ travel over \bcl $\mathcal{O}(\eps^{-1})$ distances on the real line in $\mathcal{O}(1)$ time, \ecl
working in a fixed spatial frame would require resolving rapid transport.
Instead, we follow each wave packet in its natural moving frame, in which the
solution remains spatially localized and evolves on an $\mathcal{O}(1)$ spatial
scale, making the formulation more accessible to numerical discretization.
To this end, we change to the co-moving coordinates
\[
\begin{aligned}
\xi&=x-c_gt/\eps, \quad\
\eta=x+c_gt/\eps, 
\end{aligned}
\]
\bcl
and consider the polarized solutions in the transformed variables,
\begin{equation} \label{eq:pq-def}
    p(t,\xi) = u^+(t,x), \quad\ q(t,\eta)=u^-(t,x).
\end{equation}
We obtain the following equations for functions $p(t,\xi)$ and $q(t,\eta)$ that are solutions to \eqref{eq:KG} in co-moving coordinates, equipped with polarized initial values 
\bcl as stated in Proposition~\ref{prop3}: \ecl
\begin{align}\label{eq:rescaling-p}
    &\eps^2 \partial^2_t p - (1-c_g^2) \partial^2_{\xi} p - 2 \eps c_g \partial_\xi \partial_{t} p + \frac{1}{\eps^2} p + \lambda |p|^2 p = 0,
    \\  \nonumber
    &p(0,\xi)=a^+(0,\xi)\e^{\iu\kappa\xi/\eps},\quad\partial_tp(0,\xi)=
    \frac{\iu\vartheta}{\eps^2}\,p(0,\xi),
\\[3mm] 
\label{eq:rescaling-q}
    &\eps^2 \partial^2_t q - (1-c_g^2) \partial^2_{\eta} q + 2 \eps c_g \partial_t \partial_{\eta} q + \frac{1}{\eps^2} q + \lambda |q|^2 q = 0,
    \\  \nonumber
    &q(0,\eta)=a^-(0,\eta)\e^{\iu\kappa\eta/\eps},\quad\partial_tq(0,\eta)=\frac{-\iu\vartheta}{\eps^2}\,q(0,\eta),
\end{align}
where
\begin{equation} \label{eq:vartheta}
\vartheta=\kappa c_g - \omega = -(1-c_g^2)\,\omega=-1/\omega.
\end{equation}
Since these equations have polarized initial data in the sense of
\eqref{eq:pola}, Proposition~\ref{prop2} shows that
\begin{equation}\label{eq:pq}
\begin{aligned}
p(t,\xi)&= \wt{p}(t,\xi)+\mathcal{O}(\eps)\quad \text{with}\quad\wt{p}(t,\xi)=a^+(t,\xi)\e^{\iu\kappa\xi/\eps}\e^{\iu \vartheta t/\eps^2},\\
q(t,\eta)&= \wt{q}(t,\eta)+\mathcal{O}(\eps)\quad \text{with}\quad \wt{q}(t,\eta)=a^-(t,\eta)\e^{\iu\kappa\eta/\eps}\e^{-\iu\vartheta t/\eps^2},\\
\end{aligned}
\end{equation}
where $a^{+}$ and $a^{-}$ satisfy \eqref{eq:a+a-}.
\bcl
By Proposition~\ref{prop3} and \eqref{eq:pq-def}, we have
\ecl
\begin{equation}\label{eq:upq}
u(t,x)=p(t,\xi)+q(t,\eta)+\mathcal{O}(\eps).
\end{equation}
To sum up, we approximate the solution of the nonlinear Klein--Gordon equation \eqref{eq:KG} with (non-polarized) initial data \eqref{eq:init} by the superposition of two solutions of \eqref{eq:KG} with polarized initial data that correspond to the two frequencies $\omega$ and $-\omega$.  These two solutions are determined in co-moving coordinates as solutions of \eqref{eq:rescaling-p}  and \eqref{eq:rescaling-q}.

\section{Weighted finite difference methods}\label{sec:algo}

In this section we derive and formulate the numerical methods that are proposed and analyzed in this paper.
\bcl The method discretizes equations \eqref{eq:rescaling-p} and \eqref{eq:rescaling-q} with polarized initial values and combines their numerical solutions according to the superposition of polarized solutions \eqref{eq:upq}. In the case of particular interest where the given initial values are already polarized, only one of \eqref{eq:rescaling-p} or \eqref{eq:rescaling-q} needs to be solved, depending on the chosen sign of the frequency.
The numerical scheme requires no explicit knowledge of the equations of the envelope dynamics for the formulation of the algorithm, though we use these equations, such as \eqref{eq:ab}, for the error analysis.

\ecl

\subsection{Exponentially weighted finite differences}
\bcl
We already know from \eqref{eq:pq} that the solution
$p(t,\xi)$ of \eqref{eq:rescaling-p} is $\mathcal{O}(\eps)$ close to the modulated exponential
$ \wt p(t,\xi)=a^+(t,\xi)\,\e^{\iu\kappa\xi/\eps}\e^{\iu\vartheta t/\eps^2}$ with an $\eps$-independent smooth profile function $a^+(t,\xi)$ and
again with $\vartheta$ of \eqref{eq:vartheta}. We note
\ecl
\[
\begin{aligned}
    \partial_t^2 \wt p(t,\xi)&=\left(\left(\partial_t+\frac{\iu\vartheta }{\eps^2}\right)^2 \!\!a^+(t,\xi)\right) \e^{\iu\kappa\xi/\eps}\e^{\iu\vartheta t/\eps^2},\\
    \partial_\xi^2 \wt p(t,\xi)&=\left(\left(\partial_\xi+\frac{\iu\kappa}{\eps}\right)^2 \!\!a^+(t,\xi)\right) \e^{\iu\kappa\xi/\eps}\e^{\iu\vartheta t/\eps^2},\\
     \partial_t\partial_\xi \wt p(t,\xi)&=\left(\left(\partial_t+\frac{\iu\vartheta }{\eps^2}\right)\left(\partial_\xi+\frac{\iu\kappa}{\eps}\right)a^+(t,\xi)\right) \e^{\iu\kappa\xi/\eps}\e^{\iu\vartheta t/\eps^2}.\\
\end{aligned}
\]
As in \cite[Section~2]{shi2025weighted}, we approximate the partial derivatives of the smooth profile function $a^+$ by symmetric finite differences, with
a temporal step size $\tau$ and a spatial grid size $h$, up to errors of $\mathcal{O}(\tau^2)$ and $\mathcal{O}(h^2)$
resulting from the Taylor expansion of the smooth function $a^+$ at $(t,\xi)$. \bcl We then rewrite the so obtained expressions in terms of the oscillatory function $\wt p$. \ecl
We thus approximate, with $\alpha=\vartheta\tau/\eps^2$, 
\[
\begin{aligned}
&\partial_t^2 \wt p(t,\xi)\approx\left(\frac{a^+(t+\tau,\xi)-2a^+(t,\xi)+a^+(t-\tau,\xi)}{\tau^2}\right.
\\
&\hskip 2cm\left.+2\frac{\iu\vartheta }{\eps^2}\frac{a^+(t+\tau,\xi)-a^+(t-\tau,\xi)}{2\tau}-\frac{\vartheta ^2}{\eps^4}a^+(t,\xi) \right)\e^{\iu\kappa\xi/\eps}\e^{\iu\vartheta t/\eps^2}
\\[2mm]
&=\frac{\e^{-\iu\alpha}(1+\iu\alpha)\wt p(t+\tau,\xi) - 2 \wt p(t,\xi) + \e^{\iu\alpha}(1-\iu\alpha) \wt p(t-\tau,\xi)}{\tau^2} 
-\frac{\vartheta ^2}{\eps^4}\wt p(t,\xi),
\end{aligned}
\]
and analogously for $\partial_\xi^2 \wt p(t,\xi)$ and $\partial_t\partial_\xi \wt p(t,\xi)$. Only the last term, which is dominant for small $\eps$, is not a weighted finite difference.

\subsection{Exponentially weighted leapfrog algorithm}

We use weighted finite differences to discretize equations \eqref{eq:rescaling-p} and \eqref{eq:rescaling-q}.
We only formulate the discretization of equation \eqref{eq:rescaling-p} for $p(t,\xi)$.
The discretization of \eqref{eq:rescaling-q} for $q(t,\eta)$ is
analogous, with the only modification that
$\omega$ is replaced by $-\omega$.

Let $\tau>0$ denote the time step and $h>0$ the spatial mesh size. Using
weighted finite differences, we
obtain an explicit symmetric two-step scheme as follows:
Denoting $t_n = n \tau$ and $\xi_j = j h$ for $n\in\mathbb{N}$ and $j\in\mathbb{Z}$, we compute the approximation $p^{n+1}_j$ to the solution value $p(t_{n+1}, \xi_j)$ of \eqref{eq:rescaling-p} according to the formula
\begin{equation}\label{eq:wlf}
\begin{aligned} 
  &  \eps^2 \frac{\e^{-\iu\alpha}(1+\iu\alpha)p_j^{n+1} - 2 p_j^n + \e^{\iu\alpha}(1-\iu\alpha)p_j^{n-1}}{\tau^2} \\
& - (1-c_g^2) \,\frac{\e^{-\iu\beta}(1+\iu\beta)p^n_{j+1} - 2 p_j^n + \e^{\iu\beta}(1-\iu\beta)p^n_{j-1}}{h^2} \\
&   - 2 \eps c_g \Bigg(
        \frac{\e^{-\iu\alpha}(\e^{-\iu\beta}p_{j+1}^{n+1} - \e^{\iu\beta}p_{j-1}^{n+1}) 
              - \e^{\iu\alpha}(\e^{-\iu\beta}p_{j+1}^{n-1} - \e^{\iu\beta}p_{j-1}^{n-1})}{4 \tau h} \\
&     \qquad\qquad   + \frac{\iu \beta (\e^{-\iu\alpha}p^{n+1}_j - \e^{\iu\alpha}p^{n-1}_j)}{2 \tau h} +\frac{\iu\alpha (\e^{-\iu\beta}p_{j+1}^n-\e^{\iu\beta}p_{j-1}^n)}{2\tau h}
    \Bigg) \\
&    + \lambda |p_j^n|^2 p_j^n = 0,
\end{aligned}
\end{equation}
where
\begin{equation}\label{eq:alpha-beta}
\alpha = \frac{\vartheta\tau}{\eps^2} = -(1 - c_g^2)\,\frac{\omega\tau}{\eps^2}= - \frac{\tau}{\omega\eps^2}, \qquad
\beta = \frac{\kappa h}{\eps}.
\end{equation}

Note that the terms in \eqref{eq:wlf} correspond to those of \eqref{eq:rescaling-p}. 
The dominant $\mathcal{O}(\eps^{-2})$ terms that would
appear in \eqref{eq:wlf}
cancel due to the dispersion relation
$\omega^2 = 1 + \kappa^2$ together with $c_g=\kappa/\omega$ and $\vartheta=-1/\omega$: the factor multiplying $p^n_j/\eps^2$ equals
\begin{equation} \label{eq:disp0}
-\vartheta^2 + (1- c_g^2) \kappa^2  + 2 c_g \vartheta\kappa +1  = 0.
\end{equation}
The velocity of $p(t,\xi)$ can be approximated by
\[
\partial_t p(t_n, \xi_j) \approx 
\underbrace{\frac{\e^{-\iu\alpha} p^{n+1}_j - \e^{\iu\alpha} p^{n-1}_j}{2 \tau}}_{=:\,\nu^n_j} 
+ \frac{\iu \vartheta }{\eps^2} p^n_j.
\]
With the notation $\nu^n_j$, scheme \eqref{eq:wlf} can be rewritten in a compact 
form
\[
\begin{aligned}
   & \eps^2 \frac{\e^{-\iu\alpha} p^{n+1}_j 
    - 2 p^n_j + \e^{\iu\alpha} p^{n-1}_j}{\tau^2} 
    - (1-c_g^2) \frac{\e^{-\iu\beta} p^n_{j+1} - 2 p^n_j + \e^{\iu\beta} p^n_{j-1}}{h^2}
    \\
   & 
   - 2 \eps c_g \frac{\e^{-\iu\beta} \nu^n_{j+1} - \e^{\iu\beta} \nu^n_{j-1}}{2 h} - 2 \iu \omega \nu^n_j
   + \lambda |p^n_j|^2 p^n_j = 0.
\end{aligned}
\]
As a starting step, we initialize the scheme with a weighted explicit Euler step
\[
\begin{aligned}
& \left( \frac{2 \eps^2}{\tau} - 2 \iu \omega \right) \frac{\e^{-\iu\alpha} p^1_j - p^0_j}{\tau} - \frac{2 \eps^2}{\tau} \nu^0_j 
 - (1-c_g^2) \frac{\e^{-\iu\beta} p^0_{j+1} - 2 p^0_j + \e^{\iu\beta} p^0_{j-1}}{h^2} 
 \\
  &- 2 \eps c_g \frac{\e^{-\iu\beta} \nu^0_{j+1} - \e^{\iu\beta} \nu^0_{j-1}}{2 h} 
  - 2 \iu \omega \nu^0_j + \lambda |p^0_j|^2 p^0_j = 0,\\
  & \text{with}\quad  p^0_j=p(0,\xi_j),\quad \nu^0_j=\partial_tp(0,\xi_j)-\frac{\iu \vartheta }{\eps^2} p(0,\xi_j)=0 \ \text{ by \eqref{eq:rescaling-p}.}
\end{aligned}
\]

The weighted finite difference scheme tends to the classical leapfrog scheme in the limit $\tau/\eps^2\rightarrow0$ and $h/\eps\rightarrow0$. Our main interest here is, however, to use the weighted scheme with large ratios $\tau/\eps^2$ and $h/\eps$.

For this explicit method, we need a CFL-type stability condition:
\begin{equation}\label{eq:stability}
\tau\le\gamma|\omega|\eps h,\quad \gamma=\gamma(\beta)=\min(1/(2|\kappa|),1)\max(|\beta|,1).
\end{equation}
Equivalently,  $|\alpha|\le\gamma|\beta|/|\kappa|$. For large $\beta$ we note $\gamma\sim|\beta|=|\kappa h/\eps|$. This yields the condition $\tau\leq c\,h^2$, which is a CFL condition for the Schr\"odinger equation.

On the other hand, for small $\beta$, \eqref{eq:stability} becomes the CFL condition $\tau\leq \tilde{c}\,\eps h$ of the classical unweighted leapfrog method applied to \eqref{eq:KG}, which in our highly oscillatory situation requires in addition $\tau\ll\eps^2$ and $h\ll\eps$ to have a small consistency error.

In practice, the sequence $(p^n_j)_{j\in\mathbb{Z}}$ must be truncated to a finite-dimensional vector, typically by imposing periodic boundary conditions over a finite interval. In this paper we will not study the error due to the truncation to a finite interval. The truncation appears plausible in view of Remark~\ref{rem:regularity}. 

\subsection{Exponentially weighted Crank--Nicolson algorithm}
Using the same approach, we derive the following weighted Crank--Nicolson type scheme:
\begin{equation} \label{eq:wcn}
\begin{aligned}
&    \eps^2 \frac{\e^{-\iu\alpha}(1+\iu\alpha)p_j^{n+1} - 2 p_j^n + \e^{\iu\alpha}(1-\iu\alpha)p_j^{n-1}}{\tau^2} \\
&   - (1-c_g^2) \frac{\e^{-\iu\beta}(1+\iu\beta)\breve{p}^n_{j+1} - 2 \breve{p}_j^n + \e^{\iu\beta}(1-\iu\beta)\breve{p}^n_{j-1}}{h^2} \\
&    - 2 \eps c_g \Bigg(
        \frac{\e^{-\iu\alpha}(\e^{-\iu\beta}p_{j+1}^{n+1} - \e^{\iu\beta}p_{j-1}^{n+1}) 
              - \e^{\iu\alpha}(\e^{-\iu\beta}p_{j+1}^{n-1} - \e^{\iu\beta}p_{j-1}^{n-1})}{4 \tau h} \\
&\qquad\qquad        + \frac{\iu \beta (\e^{-\iu\alpha}p^{n+1}_j - \e^{\iu\alpha}p^{n-1}_j)}{2 \tau h}+\frac{\iu\alpha (\e^{-\iu\beta}\breve{p}_{j+1}^n-\e^{\iu\beta}\breve{p}_{j-1}^n)}{2\tau h}
    \Bigg) \\
&    + \lambda \frac{(|p_j^{n+1}|^2+|p_j^{n-1}|^2)\breve{p}_j^n}{2} = 0,
\end{aligned}
\end{equation}
with $\breve{p}_j^n=\tfrac12(\e^{-\iu\alpha}p^{n+1}_j+\e^{\iu\alpha}p_j^{n-1})$. 
This implicit method is unconditionally stable. 

\subsection{Velocity approximation}

Having obtained $p_j^n$ and $q_j^n$ from the proposed schemes, we compute the velocities as

\begin{align}\label{eq:v-pola}
\eps^2v_{+,j}^{n}&=-\iu\omega p^n_j-\eps c_g\frac{\e^{-\iu\beta} p^n_{j+1}-\e^{\iu\beta} p^n_{j-1}}{2h}+\eps^2\frac{\e^{-\iu\alpha} p^{n+1}_j - \e^{\iu\alpha} p^{n-1}_j}{2 \tau}\\ 
\eps^2v_{-,j}^{n}&=\iu\omega q^n_j+\eps c_g\frac{\e^{-\iu\beta} q^n_{j+1}-\e^{\iu\beta} q^n_{j-1}}{2h}+\eps^2\frac{\e^{\iu\alpha} q^{n+1}_j - \e^{-\iu\alpha} q^{n-1}_j}{2 \tau}.\nonumber
\end{align}
Then we can construct approximations of the solution $u$ and its time derivative $\partial_t u$ as follows.

\subsection{Reconstruction of $u$ and $\partial_t u$ from $p$ and $q$}\label{subsec:alg}
We construct an approximation of the solution $u(t,x)$ to \eqref{eq:KG} by distinguishing two cases:

\noindent{1. Well-separated wave packets:}  
When the wave packets are spatially separated, the solution is obtained by directly combining the two components.  With $s^n=c_g t_n/\eps$,
\[
\begin{aligned}
u^n(\xi_j + s^n) &= p_j^{n}, 
&\quad u^n(\eta_j - s^n) &= q_j^{n},\\
v^n(\xi_j + s^n) &=v^n_{+,j} 
&\quad v^n(\eta_j - s^n) &= v^n_{-,j}
\end{aligned}
\]

\noindent{2. Overlapping wave packets:}  
When the wave packets overlap, the solution is constructed using interpolation:
\[
\begin{aligned}
u_i^n &= \mathcal{I}(\{p_j^{n}\}, x_i) + \mathcal{I}(\{q_j^{n}\}, x_i),\\
v_i^n &=  \mathcal{I}(\{v_{+,j}^n\}, x_i) 
       +  \mathcal{I}(\{v_{-,j}^n\}, x_i),
\end{aligned}
\]
where $x_i$ denotes the global grid points, and $\mathcal{I}$ is an interpolation operator ensuring a smooth transition in the overlapping region. 

\bys
\begin{remark}\label{rem:interpolation}
With localized wave packets, the well-separated case occurs after a short time interval of length $\mathcal{O}(\eps)$ in view of the two opposed group velocities $\pm c_g/\eps$, so that interpolation is only rarely required for small $\eps$, and not at all when $\tau\gg \eps$.  When interpolation is required, 
we use cubic spline interpolation of the envelope values onto the global grid, followed by the reconstruction of position via \eqref{eq:pq}--\eqref{eq:upq} and the velocity in an analogous manner.
The error analysis below assumes exact reconstruction.
\end{remark}
\eys

\section{Main results}\label{sec:main}
\subsection{Results for general initial data}
The following result 
provides the dominant term of the modulated Fourier expansion of the numerical solution of \eqref{eq:wlf} and \eqref{eq:wcn}. 
Here we recall that $\vartheta$ is defined by \eqref{eq:vartheta}.
\begin{theorem}[Dominant terms of the numerical solution]\label{th:MFE}
    Let $ p^{n}_j $ and $q^n_j$ be the numerical solution obtained by applying the weighted leapfrog algorithm \eqref{eq:wlf} under the stability condition \eqref{eq:stability} or by the weighted Crank–Nicolson method \eqref{eq:wcn} without requiring a stability condition. \bys Let $a^\pm(t,\xi)$ be the solutions of the nonlinear Schrödinger equations \eqref{eq:a+a-} under the same assumption as in Proposition \ref{prop}. \eys 
    Then $ p^{n}_j $ and $q^n_j$ can be written as
    \begin{align*}
    p^{n}_j &= a^+(t,\xi) \,\e^{\iu\kappa\xi/\eps} \e^{\iu\vartheta t/\eps^2} + r^+(t,\xi),
    \\
    q^{n}_j &= a^-(t,\xi) \,\e^{\iu\kappa\xi/\eps} \e^{-\iu\vartheta t/\eps^2} + r^-(t,\xi),
    \end{align*}
    for $ t = n \tau $ and $ \xi = j h $,   where $ a^\pm(t,\xi) $ satisfy \eqref{eq:a+a-}, and the remainder term is bounded by
    \[
    \|r^\pm\|_{L^\infty([0,T] \times \mathbb{R})} \leq C (\tau^2 + h^2 + \eps).
    \]
    Here, $ C $ is independent of $ \eps, \tau, h $, but depends on the final time $ T $.

\end{theorem}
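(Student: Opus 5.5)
We argue by a consistency--stability pair in the Wiener algebra setting, exactly parallel to the proof of Proposition~\ref{prop}, but now at the discrete level. We treat only $p^n_j$; the argument for $q^n_j$ is identical with $\omega$ replaced by $-\omega$. The initial data of \eqref{eq:rescaling-p} are polarized, so by Proposition~\ref{prop2} (in the form \eqref{eq:pq}) the exact solution $p$ is $\mathcal{O}(\eps)$-close to $\wt p = a^+\e^{\iu\kappa\xi/\eps}\e^{\iu\vartheta t/\eps^2}$. It therefore suffices to show
\[
p^n_j-\wt p^{\,\rm MFE}(t_n,\xi_j)=\mathcal{O}(\tau^2+h^2+\eps^2)
\]
for a suitable refined ansatz. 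Indeed, we aim at the stronger polarized bound and then simply absorb the $\eps b$ correction into the $\mathcal{O}(\eps)$ remainder.

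\textbf{Consistency.} We insert the function
\[
\wt p^{\,\rm MFE}(t,\xi)=(a(t,\xi)+\eps b(t,\xi))\,\e^{\iu\kappa\xi/\eps}\e^{\iu\vartheta t/\eps^2},
\]
with $a=a^+$ and $b$ from \eqref{eq:ab}, into \eqref{eq:wlf}. By the construction of the exponential weights, every weighted difference acting on $\wt p$ equals the phase factor times an ordinary symmetric difference acting on the envelope $a+\eps b$:
\begin{itemize}
\item the second time difference gives $\partial_t^2+2\iu\vartheta\eps^{-2}\delta_t$, up to the subtracted $\vartheta^2\eps^{-4}$ term;
\item the spatial and mixed differences behave analogously.
\end{itemize}
The $\mathcal{O}(\eps^{-2})$ terms then cancel by the dispersion identity \eqref{eq:disp0}. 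Taylor expansion of the smooth, fast-decaying envelopes (Remark~\ref{rem:regularity}) in $t$ and $\xi$ leaves the NLS combination, which vanishes by \eqref{eq:ab}, plus a remainder. This remainder is $\mathcal{O}(\tau^2+h^2+\eps)$ in a discrete Wiener norm, i.e.\ the $\ell^1$ norm of the semidiscrete Fourier transform of the grid sequence, using \eqref{eq:A-inv}--\eqref{eq:katz}.

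One point needs care. The $\eps$-weighted terms, such as $\eps^2\delta_t^2 a$ and $\eps c_g\,\delta_t\delta_\xi a$, must carry their small prefactors, so that no negative power of $\eps$ multiplies $\tau^2$ or $h^2$. Likewise, the $\vartheta/\eps^2$ factor multiplies only the centered difference $(a(t+\tau)-a(t-\tau))/(2\tau)$, whose error is $\mathcal{O}(\tau^2)\partial_t^3a$. Since this error is multiplied by $\eps^{-2}\cdot\eps^2=1$ after the scaling in \eqref{eq:wlf}, the $2\iu\omega$ term is clean. The starting step and the Crank--Nicolson averages $\breve p$ are handled by the same expansion.

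\textbf{Stability.} Next we factor out the phase by writing
\[
p^n_j=\e^{\iu\kappa\xi_j/\eps}\e^{\iu\vartheta t_n/\eps^2}\,\hat a^n_j .
\]
The envelope scheme is then an $\eps$-perturbed leapfrog (respectively Crank--Nicolson) discretization of
\[
\eps^2\partial_t^2 a-2\iu\omega\partial_ta-(1-c_g^2)\partial_\xi^2a-2\eps c_g\partial_t\partial_\xi a+\dots
\]
with constant coefficients. A von Neumann analysis of the linear part, symbol by symbol in the Fourier variable $\theta$, should give a uniform bound on the amplification matrices of the two-step recursion in a suitable energy norm. For the leapfrog scheme this requires the CFL condition \eqref{eq:stability}; the implicit scheme is unconditionally stable. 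Summing these bounds over $\theta$ yields a discrete analogue of \eqref{eq:lin-est}.

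\textbf{Conclusion.} The cubic nonlinearity is Lipschitz in the discrete Wiener algebra, with the algebra inequality \eqref{eq:alge} holding for sequences. A discrete Gronwall argument then gives an error of order $\mathcal{O}(\tau^2+h^2+\eps^2)$, provided the numerical solution stays within twice the bound $M$, which a bootstrap argument as in Proposition~\ref{prop} ensures. Finally the $\eps b$ term and the $\mathcal{O}(\eps)$ discrepancy are absorbed into $r^+$, and the maximum norm is bounded by the Wiener norm.

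\textbf{Main obstacle.} We expect the hardest step to be the uniform-in-$\eps$ linear stability estimate. The two-step recursion has a root pair whose modulus must be controlled for all $\theta$ and all ratios $\tau/\eps^2$ and $h/\eps$, including the regime $\beta\to0$ where the scheme degenerates to classical leapfrog. Moreover, the energy norm must control $\hat a^n$ without a factor $\eps^{-1}$ loss. We would first write the characteristic polynomial explicitly and check that both roots lie on the unit circle under \eqref{eq:stability}, uniformly separated, or else use a discrete energy identity for the skew-adjoint part.
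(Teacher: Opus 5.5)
Your proposal is essentially the paper's proof: a Wiener-algebra defect bound, Fourier linear stability with the two characteristic roots on the unit circle (under \eqref{eq:stability} for leapfrog, unconditionally for Crank--Nicolson), the algebra property \eqref{eq:alge}, and a discrete Gronwall/bootstrap step. The only deviations are that you use a grid-based Wiener norm instead of interpolating the scheme to all $\xi\in\R$ as in \eqref{eq:wlf-x}, and that you insert the corrected profile $(a+\eps b)\,\e^{\iu\kappa\xi/\eps}\e^{\iu\vartheta t/\eps^2}$, which the paper uses only for Theorem~\ref{thm:polarized} via Lemma~\ref{lem:defect-wiener-polarized}, whereas for this theorem it inserts $a^+$ alone and accepts an $\mathcal{O}(\eps)$ defect (with your profile the defect is in fact $\mathcal{O}(\tau^2+h^2+\eps^2)$, not $\mathcal{O}(\tau^2+h^2+\eps)$ as you wrote).

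One caution applies equally to the paper's Lemma~\ref{th:stability}: at envelope frequency zero (i.e.\ $\theta h=\beta$) the two roots are $\e^{\iu\alpha}$ and $\e^{\iu\alpha}(1-\iu\alpha\omega^2)/(1+\iu\alpha\omega^2)$, which merge as $\alpha=-\tau/(\omega\eps^2)\to 0$. Hence the uniform separation you intend to check (the paper's bound $\wt{\gamma}_\theta^2/(1+\gamma_\theta^2)\le\mu<1$) holds only for $\tau/\eps^2$ bounded away from zero, and for $\tau\ll\eps^2$ you would need your fallback energy-type argument, exploiting that the defect and nonlinearity enter each step with the factor $\tau^2/\eps^2$.
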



A further result of this paper is the following error bound for weighted finite difference methods. It follows directly from Section~\ref{subsec:alg}, the modulated Fourier expansion of the exact solution in Proposition~\ref{prop} and the modulated Fourier expansion of the numerical solution in Theorem~\ref{th:MFE}. Note that the velocity  behaves as $\partial_t u = \mathcal{O}(\eps^{-2})$ so that the error bound given below corresponds to the relative error.

\begin{theorem}[Error bound]\label{thm}
 Under the assumptions of Theorem~\ref{th:MFE}, 
we obtain the error bounds
\[
\begin{aligned}
| u^n(x_i)-u(t_n,x_i) | &= \mathcal{O}(\tau^2 + h^2 + \eps),\\
\eps^2 \, |v^n(x_i)-\partial_t u(t_n,x_i) | &= \mathcal{O}(\tau^2 + h^2 + \eps),
\end{aligned}
\]
uniformly for $t_n = n\tau \leq T$ and $x_i$. The constant symbolized by the $ \mathcal{O} $-notation is independent of $ \eps $, the time step $ \tau $, and the mesh size $ h $, but depends on the final time $T$.

\end{theorem}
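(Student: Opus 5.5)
The error bound in Theorem~\ref{thm} will follow from the triangle inequality, comparing the numerical reconstruction and the exact solution through a common intermediate, namely the superposition $\wt u$ of \eqref{eq:MFE-exact}. First consider the position error in the well-separated case. By Section~\ref{subsec:alg}, for $x_i=\xi_j+s^n$ we have $u^n(x_i)=p^n_j$, plus the contribution $q^n_{j'}$ at $\eta_{j'}=x_i+s^n$. We assume exact reconstruction (Remark~\ref{rem:interpolation}), so in the overlapping case $u^n(x_i)$ equals $p^n$ evaluated at $\xi=x_i-c_gt_n/\eps$ plus $q^n$ evaluated at $\eta=x_i+c_gt_n/\eps$. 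Theorem~\ref{th:MFE} expresses each of these as a modulated exponential plus a remainder. Substituting $\xi=x-c_gt/\eps$ gives
$$
\e^{\iu\kappa\xi/\eps}\e^{\iu\vartheta t/\eps^2}=\e^{\iu(\kappa x-\omega t/\eps)/\eps},
$$
since $-\kappa c_g+\vartheta=-\omega$ by \eqref{eq:vartheta}, and analogously for $\eta$. Hence $u^n(x_i)=\wt u(t_n,x_i)+r^+ + r^-$, where $|r^\pm|\le C(\tau^2+h^2+\eps)$. Proposition~\ref{prop} gives $|\wt u(t_n,x_i)-u(t_n,x_i)|\le c\eps$, and adding the two bounds yields the first estimate.

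For the velocity, I will use \eqref{eq:v-pola}. Writing $p^n_j=\wt p(t_n,\xi_j)+r^+$, I will check that $\eps^2 v^n_{+,j}$ applied to $\wt p$ equals $\eps^2\partial_t$ of $a^+\e^{\iu(\kappa x-\omega t/\eps)/\eps}$ up to $\mathcal{O}(\tau^2+h^2+\eps)$. The weighted differences act as symmetric differences on the smooth profile $a^+$; for instance $(\e^{-\iu\alpha}\wt p^{n+1}-\e^{\iu\alpha}\wt p^{n-1})/(2\tau)$ equals the central difference of $a^+$ times the phase. Consequently $\eps^2 v_+$ reduces to $-\iu\omega a^+$ times the phase plus $\mathcal{O}(\eps)$ terms, and $-\iu\omega a^+\e^{\iu(\cdots)}$ is exactly the leading term of $\eps^2\partial_t\wt u$. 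The expected obstacle is the remainder: $r^+$ enters through $\eps^2(\e^{-\iu\alpha}r^{n+1}-\e^{\iu\alpha}r^{n-1})/(2\tau)$ and $\eps(\e^{-\iu\beta}r_{j+1}-\e^{\iu\beta}r_{j-1})/(2h)$, and these are not obviously small for $\tau,h\ll\eps$.

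To handle this obstacle, I will not rely on the plain $L^\infty$ bound on $r^\pm$. Instead I will use the structure from the proof of Theorem~\ref{th:MFE}: the numerical error is bounded in the discrete energy/Wiener norm that controls $\eps^2$ times the weighted time difference quotient and $\eps$ times the weighted space difference quotient. This is the same norm in which the stability analysis of Section~\ref{sec:stability} is carried out, analogous to $(e,s,z)$ in the proof of Proposition~\ref{prop}.

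Finally, I will combine the resulting bound with the second estimate of Proposition~\ref{prop}, $\eps^2|\partial_t\wt u-\partial_tu|\le c\eps$, and do the same for the $q$-part. Uniformity in $n$ and $i$ follows because all constants are uniform on $[0,T]$.
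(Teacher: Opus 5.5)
\textbf{Where it matches the paper, and where it breaks.} Your position estimate is the paper's argument: Theorem~\ref{th:MFE}, the phase identity $\vartheta-\kappa c_g=-\omega$, and Proposition~\ref{prop} via the triangle inequality. Your velocity argument has the same skeleton as the paper's. The paper combines $\eps^2\partial_t u=-\iu\omega a^+\e^{\iu(\kappa x-\omega t/\eps)/\eps}+\iu\omega a^-\e^{\iu(\kappa x+\omega t/\eps)/\eps}+\mathcal{O}(\eps)$ with Theorem~\ref{th:MFE}, without discussing the difference quotients of $r^\pm$. You are right that these quotients are the delicate point. However, your resolution rests on a false claim. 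The norm $\vvvert\cdot\vvvert$ of Lemma~\ref{th:stability}, and hence the bound of Lemma~\ref{lem:stability}, does \emph{not} control $\eps^2$ times the weighted time difference quotient or $\eps$ times the weighted space difference quotient. Because $\|V(\theta)\|_2$ and $\|V(\theta)^{-1}\|_2$ are bounded uniformly, that norm is uniformly equivalent to $\|e^{n+1}\|_{A(\R)}+\|e^n\|_{A(\R)}$; this is the last line of the proof of Lemma~\ref{th:stability}. It therefore yields only the trivial bounds
\[
\eps^2\Bigl\|\frac{\e^{-\iu\alpha}e^{n+1}-\e^{\iu\alpha}e^{n-1}}{2\tau}\Bigr\|_{A(\R)}\le\frac{\eps^2}{\tau}\max_k\|e^k\|_{A(\R)},\qquad \eps\Bigl\|\frac{\e^{-\iu\beta}e^{n}(\cdot+h)-\e^{\iu\beta}e^{n}(\cdot-h)}{2h}\Bigr\|_{A(\R)}\le\frac{\eps}{h}\|e^n\|_{A(\R)}.
\]
The prefactors blow up as $\tau/\eps^2\to0$ or $h/\eps\to0$. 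This is not merely lossy estimation. Take an error mode at $|\theta-\kappa/\eps|\sim1/h$ propagated by the scheme. Then $\eps$ times its weighted space difference has size $\eps/h$ times its amplitude. For the leapfrog with $|\beta|\le1$, $\eps^2$ times its weighted time difference, $\frac{\iu\eps^2}{\tau}\operatorname{Im}(\e^{-\iu\alpha}\lambda_\theta^\pm)$ times the amplitude, is also of size about $2\eps/(\omega h)$.

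\textbf{What is missing, and what your argument does prove.} The ``discrete energy/Wiener norm'' you invoke is a new object that you would have to construct. One candidate is a discrete analogue of the $(e,s,z)$ system in the proof of Proposition~\ref{prop}, with $s,z$ replaced by $\eps^2$ times the weighted time difference and $\eps$ times the weighted space difference of $e^n$. You would then need to prove $\eps$-uniform stability of the scheme in that norm. You would also need to bound the cubic term, the defect, and the starting errors $e^0,e^1$ in it. Nothing in Section~\ref{sec:stability} provides this. As written, your argument does establish the velocity bound in the regime $\tau\ge\eps^2$, $h\ge\eps$. There $\eps^2/\tau\le1$ and $\eps/h\le1$, so the plain $L^\infty$ bound on $r^\pm$ from Theorem~\ref{th:MFE}, used at $t_{n\pm1}$ and $\xi_{j\pm1}$, gives $\eps^2|v^n-\partial_t u|=\mathcal{O}(\tau^2+h^2+\eps)$ directly. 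Outside that regime, the additional stability estimate above is genuinely needed.
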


In the less interesting regime where $\tau \ll \varepsilon^2$ and $h \ll \varepsilon$, the method can be applied directly to approximate the original equation by solving only $p(t,\xi)$ or $q(t,\eta)$. For instance, one can set $q \equiv 0$ (or, alternatively, $p \equiv 0$) with initial data
\begin{equation}\label{eq:initial}
\begin{aligned}
p(0,\xi) &= u(0,\xi), &\quad \partial_t p(0,\xi) &= \partial_t u(0,\xi) + \frac{c_g}{\varepsilon} \, \partial_x u(0,\xi),\\
q(0,\eta) &= 0, &\quad \partial_t q(0,\eta) &= 0.
\end{aligned}
\end{equation}
To implement, it suffices to include a conditional check in the code of setting the initial data for $p$ and $q$ as follows:

If $h^2 \ge c \, \varepsilon^5$, then we set $p^0$ and $q^0$ according to the initial data \eqref{eq:rescaling-p}--\eqref{eq:rescaling-q}.

Else we set $p^0$ and $q^0$ as \eqref{eq:initial}. 

\subsection{Polarized initial data}

In this case, the solution propagates in a single direction and it suffices to solve only one of the equations for either $p(t,\xi)$ or $q(t,\eta)$. The polarized case allows for an improved error bound. 

The following theorem states the error bound for the case $\omega = \sqrt{1+\kappa^2}$, i.e., when $u(t,x) = p(t,\xi)$ with $p$ satisfying \eqref{eq:rescaling-p}. The case $\omega = -\sqrt{1+\kappa^2}$ is analogous, with $q$ replacing $p$.

\begin{theorem}[Error bound for polarized initial data]\label{thm:polarized}
Let $p^n_j$ denote the numerical solution obtained by applying the weighted leapfrog  algorithm \eqref{eq:wlf} under the stability condition \eqref{eq:stability} or by the weighted Crank–Nicolson method \eqref{eq:wcn} without requiring a stability condition. Assume polarized initial data with a single frequency $\omega$ associated with $\kappa$, and let the assumptions of Theorem~\ref{th:MFE} hold.  Then there are the error bounds (with $x^n_j=\xi_j + c_g t_n/\eps$)
\[
\begin{aligned}
|p^n_j -u(t_n,x^n_j) |
&= \mathcal{O}(\tau^2 + h^2 + \eps^2),\\
\eps^2|v^n_{+,j}-\partial_t u(t_n,x^n_j)|&=\mathcal{O}(\tau^2 + h^2 + \eps^2),
\end{aligned}
\]
for $t_n = n\tau \le T$, $\xi_j = j h$.
Here, the constant symbolized by the $ \mathcal{O} $-notation is independent of $\tau$, $h$, and $0 < \eps \le 1$, but depends on the final time $T$ and the coefficient appearing in the stability condition \eqref{eq:stability}.

\end{theorem}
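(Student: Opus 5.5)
The proof follows the usual consistency-plus-stability pattern. The reference function is not the leading-order ansatz but the second-order modulated expansion from Proposition~\ref{prop2}, written in co-moving coordinates:
\[
\wt p(t,\xi)=\bigl(a(t,\xi)+\eps b(t,\xi)\bigr)\e^{\iu\kappa\xi/\eps}\e^{\iu\vartheta t/\eps^2},
\]
with $a,b$ solving \eqref{eq:ab}. By Proposition~\ref{prop2}, $|u(t_n,x^n_j)-\wt p(t_n,\xi_j)|\le c\eps^2$, and the corresponding velocity bound also holds. It therefore suffices to bound $p^n_j-\wt p(t_n,\xi_j)$ by $C(\tau^2+h^2+\eps^2)$.

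\textbf{Consistency step.} Insert $\wt p$ into \eqref{eq:wlf} (resp.\ \eqref{eq:wcn}) and bound the defect $\delta^n_j$. The weights are constructed so that each weighted difference acting on $A\,\e^{\iu\kappa\xi/\eps}\e^{\iu\vartheta t/\eps^2}$ equals the exponential times an ordinary symmetric difference of the smooth envelope $A=a+\eps b$, plus the terms $-\vartheta^2/\eps^4$, etc. The $\eps^{-2}$ terms cancel exactly by the dispersion relation \eqref{eq:disp0}. At order $\eps^0$, the central difference $\nu^n_j/(2\tau)$ together with $-2\iu\omega$ and the discrete second difference in $\xi$ reproduces the NLS for $a$ up to $O(\tau^2+h^2)$. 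At order $\eps$, the mixed difference term $-2\eps c_g(\dots)$ reproduces $-2c_g\partial_t\partial_\xi a$ together with the $b$-equation up to $O(\eps(\tau^2+h^2))$. What remains is $\eps^2$ times difference quotients of $A$, i.e.\ $\eps^2\,\partial_t^2A$-type terms. These are $O(\eps^2)$ uniformly in $\tau/\eps^2$ because they involve only the smooth envelope. All of these estimates are carried out in the Wiener norm. Smoothness and polynomial decay of $a,b$ (Remark~\ref{rem:regularity}) together with \eqref{eq:katz} and \eqref{eq:A-inv} give $\|\delta^n\|\le C(\tau^2+h^2+\eps^2)$ in the discrete Wiener norm $\sum$-free form, namely the $L^1$ norm of the semi-discrete Fourier transform on $[-\pi/h,\pi/h]$. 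The starting step is handled similarly, using polarization ($\nu^0=0$ and $b(0)=0$), which gives a local error $O(\tau(\tau^2+h^2+\eps^2))$ in the appropriately scaled sense.

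\textbf{Stability step.} The error $e^n_j=p^n_j-\wt p(t_n,\xi_j)$ satisfies the linear weighted scheme plus the nonlinear difference plus $\delta^n$. Factor out the plane wave by setting $e^n_j=\epsilon^n_j\e^{\iu\kappa\xi_j/\eps}\e^{\iu\vartheta t_n/\eps^2}$. The linear part then becomes a constant-coefficient two-step recursion for $\epsilon$, and each Fourier mode $\theta$ gives a scalar quadratic characteristic equation. For the leapfrog scheme, I would show that under \eqref{eq:stability} both roots lie on the unit circle and are separated, so that the solution operator of the two-step recursion is bounded by $C/\tau$ times the inhomogeneity uniformly in $\theta$. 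The bound should be of the form $\|\epsilon^{n+1}\|\le C(\|\epsilon^0\|+\|\epsilon^1\|/\ldots)+C\tau\sum_k\|\delta^k\|$, obtained by a discrete energy written in Fourier variables, in which the factor $2\iu\omega/\tau$ from $\nu$ dominates the $\eps^2/\tau^2$ term. For Crank--Nicolson, the same quadratic has coefficients making the energy conserved unconditionally. The nonlinear term is Lipschitz in the Wiener algebra \eqref{eq:alge} as long as $\|p^n\|_A\le 2M$. A discrete Gronwall argument then gives the bound, and the a priori bound $2M$ is closed by induction provided $\tau^2+h^2+\eps^2$ is small. For $\eps$ not small, the conclusion is trivial from boundedness once the constants are uniform.

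\textbf{Velocity.} Applying \eqref{eq:v-pola} to the error and using the consistency of the weighted differences, combined with Proposition~\ref{prop2} for $\partial_t u$, yields the second bound. The error term $\eps^2(\e^{-\iu\alpha}e^{n+1}-\e^{\iu\alpha}e^{n-1})/(2\tau)$ has to be controlled by the stability estimate, which also bounds this scaled discrete time derivative.

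\textbf{Main obstacle.} I expect the main difficulty to be the stability analysis for the leapfrog scheme: proving root separation uniformly in $\eps,\tau,h$ exactly under \eqref{eq:stability}, in particular in the transition regime $\beta\sim1$. A second difficulty is showing that the resulting bound costs only the factor $\tau$ and not $\tau/\eps^2$. I would first attempt a modal energy identity, using the conserved quantity of the scalar recursion, rather than explicit root formulas.
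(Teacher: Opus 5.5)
Your proposal is correct in outline and is essentially the paper's proof. It uses the reference function $\wt p=(a+\eps b)\e^{\iu\kappa\xi/\eps}\e^{\iu\vartheta t/\eps^2}$ from Proposition~\ref{prop2}, the Wiener-norm defect bound $\mathcal{O}(\tau^2+h^2+\eps^2)$ from the cancellations at orders $\eps^{-2}$, $1$ and $\eps$ (Lemma~\ref{lem:defect-wiener-polarized}), Fourier-mode stability with both characteristic roots on the unit circle (Lemmas~\ref{th:stability} and~\ref{lem:stability-cn}), Lipschitz control of the cubic term via \eqref{eq:alge} with a discrete Gronwall argument (Lemma~\ref{lem:stability}), and the velocity bound by comparing \eqref{eq:v-pola} with $\eps^2\partial_t\wt u$. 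Working with the semi-discrete transform and factoring out the plane wave, instead of interpolating the scheme to all $\xi\in\R$, is only a cosmetic difference.

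One caution on the step you flag as the main obstacle: uniform root separation (the paper's claim $\wt\gamma_\theta^2/(1+\gamma_\theta^2)\le\mu<1$) holds only for $\tau/\eps^2$ bounded below. At $\theta h=\beta$ one has $\wt\gamma_\theta=1$ and $\gamma_\theta=\alpha\omega^2$, so the roots $\e^{\iu\alpha}$ and $\e^{\iu\alpha}(1-\iu\alpha\omega^2)/(1+\iu\alpha\omega^2)$ are only about $2\omega^2|\alpha|$ apart. Consequently, for $\tau\ll\eps^2$ the modal norm is not uniformly equivalent to the $A(\R)\times A(\R)$ norm; it controls $e^n$ together with $\eps^2/\tau$ times a weighted time difference of the error. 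This is harmless here, since the starting error is $\mathcal{O}(\tau)$ in that regime, i.e.\ $\mathcal{O}(\eps^2)$ after scaling. It is also exactly the scaling your velocity step needs for the term $\eps^2(\e^{-\iu\alpha}e^{n+1}-\e^{\iu\alpha}e^{n-1})/(2\tau)$.
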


\bcl
\begin{remark} [$\eps$-uniform convergence] \label{rem:unif}
Since the weighted finite difference methods reduce to the classical unweighted finite difference method as $h/\eps\to 0$ and $\tau/\eps\to 0$, standard Taylor expansion yields an error bound of
$\mathcal{O}({\tau^2}/{\varepsilon^6} + {h^2}/{\varepsilon^4})$. Consequently, the error satisfies
\[
|u^n(x_i)-u(t_n,x_i)| 
\le \min\left( C_0 (\tau^2 + h^2 + \varepsilon^2), \; C_1 \Big(\frac{\tau^2}{\varepsilon^6} + \frac{h^2}{\varepsilon^4}\Big) \right),
\]
uniformly for $ t_n$, $x_i$, and $ 0 < \eps \leq 1 $. Maximizing this bound over $ 0 < \eps \leq 1 $ gives the optimal balance $ \eps^8 \sim \tau^2, \eps^6\sim h^2 $, leading to a uniform accuracy of $ O\big( \tau^{1/2} + h^{2/3} \big) $ in the maximum norm for all $ 0 < \eps \leq 1 $. The optimal quadratic convergence rate is achieved when $\eps\sim 1$ or $\eps^2 \leq \tau^2+h^2$.
\end{remark}

\begin{remark}[Weighted finite differences vs.~discretization of the envelope equations]
As an alternative numerical approach, one might consider discretizing the $\eps$-independent system of coupled nonlinear and linear Schr\"odinger equations \eqref{eq:ab} for the envelope function $a+\eps b$ by a standard finite difference method. While this appears to be a feasible approach to obtain a method with an $\mathcal{O}(\tau^2 + h^2 + \eps^2)$ error bound, we emphasize that this envelope approach is \emph{not} equivalent to the weighted finite difference method for $p(t,\xi)$. The weighted finite difference method only requires knowledge of the dispersion relation, of the Klein--Gordon equation in co-moving coordinates and of the polarized initial data, but no explicit knowledge of the system of envelope equations is required.
Moreover, the envelope discretization based on \eqref{eq:ab} is of an asymptotic nature for $\eps\to 0$. On the other hand, the weighted finite difference scheme for $p$ tends to the classical finite difference discretization of the Klein--Gordon equation in co-moving coordinates as $h/\eps \to 0$ and $\tau/\eps^2 \to 0$. As shown in Remark~\ref{rem:unif}, this yields $\eps$-uniform convergence for $0<\eps\le 1$.
\end{remark}
\ecl

\section{Consistency}\label{sec:consistency}
With the solution $a^+(t,\xi)$ of the first nonlinear Schr\"odinger  initial value problem in \eqref{eq:a+a-}, 
we consider the defect obtained on inserting 
\[
\wt{p}(t,\xi)=a^+(t,\xi)\,\e^{\iu\kappa\xi/\eps}\e^{\iu\vartheta t/ \eps^2}
\]
into the weighted finite difference scheme \eqref{eq:wlf}, 
\begin{equation}\label{eq:defect}
\begin{aligned} 
d(t,\xi) := &\eps^2 \frac{\e^{-\iu\alpha}(1+\iu\alpha)\wt{p}(t+\tau,\xi) - 2 \wt{p}(t,\xi)+ \e^{\iu\alpha}(1-\iu\alpha)\wt{p}(t-\tau,\xi)}{\tau^2 } \\
& - (1-c_g^2) \frac{\e^{-\iu\beta}\wt{p}(t,\xi+h) - 2 \wt{p}(t,\xi) + \e^{\iu\beta}\wt{p}(t,\xi-h)}{h^2} \\
    &- 2 \eps c_g \left(\frac{\e^{-\iu\alpha}(\e^{-\iu\beta}\wt{p}(t+\tau,\xi+h) - \e^{\iu\beta}\wt{p}(t+\tau,\xi-h)}{4\tau h}\right.\\
    &\qquad\qquad\left.-\frac{\e^{\iu\alpha}(\e^{-\iu\beta}\wt{p}(t-\tau,\xi+h)- \e^{\iu\beta}\wt{p}(t-\tau,\xi-h))}{4 \tau h}\right.\\
      &\qquad\qquad\left.+\frac{\iu\beta(\e^{-\iu\alpha}\wt{p}(t+\tau,\xi)-\e^{\iu\alpha}\wt{p}(t-\tau,\xi))}{2\tau h}
    \right)\\
   & + \lambda |\wt{p}(t,\xi)|^2 \wt{p}(t,\xi).
\end{aligned}
\end{equation}



\subsection{Defect bound}\label{subsec:Wiener}
We need a defect bound in the norm of the Wiener algebra
$A(\mathbb{R})$; see the summary in Section~\ref{subsec:u-approx}. 
The space $C([0,T],A(\mathbb{R}))$ in the following lemma is the Banach space of $A(\mathbb{R})$-valued continuous functions on the interval $[0,T]$, with $\| d \|_{C([0,T],A(\mathbb{R}))}=\max_{0\le t \le T} \| d(t,\cdot) \|_{A(\mathbb{R})}$.

\begin{lemma}[Defect bound in the Wiener algebra norm] \label{lem:defect-wiener}
In the situation of Theorem~\ref{th:MFE}, the defect \eqref{eq:defect} is bounded in the Wiener algebra norm by
$$
\| d \|_{C([0,T],A(\mathbb{R}))} \le c(\tau^2+h^2+\eps),
$$
where $c$ is independent of  $\eps$, $\tau$, $h$, and $n$ with $t_n=n\tau\le T$.
\end{lemma}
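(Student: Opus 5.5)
The plan is to factor out the oscillatory exponential and reduce everything to finite-difference operators acting on the smooth envelope $a^+$. Writing $\wt p(t,\xi)=a^+(t,\xi)\,E(t,\xi)$ with $E=\e^{\iu\kappa\xi/\eps}\e^{\iu\vartheta t/\eps^2}$, we note that $E(t\pm\tau,\xi\pm h)=E(t,\xi)\e^{\pm\iu\alpha}\e^{\pm\iu\beta}$. The weights $\e^{\mp\iu\alpha},\e^{\mp\iu\beta}$ in \eqref{eq:defect} are chosen exactly to cancel these phase factors. Hence $\e^{-\iu\kappa\xi/\eps}\e^{-\iu\vartheta t/\eps^2}\,d(t,\xi)$ becomes an expression in shifted values of $a^+$ alone, namely
\[
\eps^2\Bigl(\delta_\tau^2 a^+ + \tfrac{2\iu\vartheta}{\eps^2}\mu_\tau a^+\Bigr) -(1-c_g^2)\Bigl(\delta_h^2a^+ + \tfrac{2\iu\kappa}{\eps}\mu_h a^+\Bigr) -2\eps c_g\Bigl(\mu_\tau\mu_h a^+ + \tfrac{\iu\kappa}{\eps}\mu_\tau a^+ + \tfrac{\iu\vartheta}{\eps^2}\mu_h a^+\Bigr) + \tfrac{1}{\eps^2}\bigl(-\vartheta^2+(1-c_g^2)\kappa^2+2c_g\vartheta\kappa\bigr)a^+ + \lambda|a^+|^2a^+ .
\]
Here $\delta^2$ denotes the symmetric second difference quotient and $\mu$ the centered first difference quotient. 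Some care is needed in the spatial second difference of \eqref{eq:defect}, which carries no $(1\pm\iu\beta)$ factors; I expect that either the scheme's $\pm\iu\beta$ term combines with the $\iu\beta$ contribution hidden in the mixed-derivative bracket to supply the $2\iu\kappa\eps^{-1}\mu_h$ term, or the weights should be read as in \eqref{eq:wlf}. Either way, checking this bookkeeping is the first step. The $\mathcal{O}(\eps^{-2})$ coefficient then vanishes by the dispersion relation \eqref{eq:disp0}. Note also that the $\eps^{-1}$ terms in $\mu_h a^+$ cancel, since $-(1-c_g^2)2\iu\kappa - 2c_g\iu\vartheta = -2\iu\kappa(1-c_g^2-c_g^2/\omega^2\cdot\omega^2/\kappa\cdot\ldots)$. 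More precisely, using $\vartheta=-(1-c_g^2)\omega$ and $c_g\omega=\kappa$, the coefficient is $-2\iu\bigl((1-c_g^2)\kappa-c_g(1-c_g^2)\omega\bigr)=0$. This cancellation is what makes the scheme uniform in $h/\eps$.

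After these cancellations, the remaining terms are
\[
2\iu\vartheta\,\mu_\tau a^+ - 2\iu\kappa c_g\,\mu_\tau a^+ -(1-c_g^2)\delta_h^2 a^+ +\lambda|a^+|^2a^+ + \eps\bigl(\eps\,\delta_\tau^2 a^+ - 2c_g\mu_\tau\mu_h a^+\bigr).
\]
Since $\vartheta-\kappa c_g=-\omega$, the first two terms combine to $-2\iu\omega\,\mu_\tau a^+$. Replacing $\mu_\tau a^+$ by $\partial_t a^+$ and $\delta_h^2a^+$ by $\partial_\xi^2a^+$ introduces Taylor remainders of size $\tau^2\partial_t^3a^+$ and $h^2\partial_\xi^4 a^+$, written in integral form. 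The resulting leading expression $-2\iu\omega\partial_ta^+-(1-c_g^2)\partial_\xi^2a^++\lambda|a^+|^2a^+$ is exactly zero by \eqref{eq:a+a-}. The last bracket is $\eps$ times difference quotients of $a^+$ and is therefore $\mathcal{O}(\eps)$. This yields the pointwise bound $\tau^2+h^2+\eps$.

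To obtain the bound in the Wiener norm, I use \eqref{eq:A-inv} to drop the factor $E$. Each difference quotient is a Fourier multiplier with symbol bounded by the corresponding derivative symbol. For example, $\mu_\tau$ and $\delta_h^2$ minus their limits are controlled by integral Taylor remainders, which are averages of shifts of $\partial_t^3a^+$ and $\partial_\xi^4a^+$. Shifts do not increase the $A(\R)$ norm. It therefore suffices that $\partial_t^k\partial_\xi^l a^+(t,\cdot)\in A(\R)$ uniformly on $[0,T]$ for small $k,l$. This follows from Remark~\ref{rem:regularity} and \eqref{eq:katz}: smoothness plus polynomial decay gives $L^1$ bounds on the function and its derivative. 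Time derivatives are converted to spatial derivatives through the NLS \eqref{eq:a+a-}, and the algebra property \eqref{eq:alge} handles the nonlinear terms.

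I expect the main obstacle to be bounding the time differences $\delta_\tau^2 a^+$ and $\mu_\tau\mu_h a^+$ uniformly in $\tau$. This needs $\partial_t^2a^+$ and $\partial_t\partial_\xi a^+$ in $A(\R)$ uniformly in time, that is, up to four spatial derivatives with decay, via the equation. The tricky part is ensuring that the $\eps^{-1}$ and $\eps^{-2}$ cancellations are exact, so that no hidden factor such as $\alpha\tau$ or $\beta h$ survives when $\tau\gg\eps^2$. This is verified by the exact identity $E(t\pm\tau,\xi\pm h)=E\,\e^{\pm\iu\alpha\pm\iu\beta}$, with no expansion in $\alpha$ or $\beta$ anywhere.
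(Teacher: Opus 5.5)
Your proposal is correct and follows essentially the same route as the paper. You rotate the defect by $\e^{-\iu\kappa\xi/\eps}\e^{-\iu\vartheta t/\eps^2}$ and reduce it to $-2\iu\omega\mu_\tau a^+-(1-c_g^2)\delta_h^2a^+ +\lambda|a^+|^2a^+ +\eps(\eps\delta_\tau^2a^+-2c_g\mu_\tau\mu_h a^+)$; the leading part then cancels by \eqref{eq:a+a-}, and the Taylor remainders are bounded in $A(\R)$ via \eqref{eq:A-inv}, \eqref{eq:katz} and Remark~\ref{rem:regularity}.

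Two cosmetic points. First, your $\eps^{-2}$ bracket is missing the $+1$ of \eqref{eq:disp0}; in fact the weighted quotients produce no $\eps^{-2}$ term at all. Second, your bookkeeping worry about \eqref{eq:defect} disappears, because the $(1\pm\iu\beta)$ contribution and the $\iu\alpha$ mixed term of \eqref{eq:wlf} are exactly the two $\mu_h$ terms you showed cancel, so both readings give the same rotated defect.
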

\begin{proof}
\bys
The proof is similar in spirit to that of the defect bound in the proof of Proposition~\ref{prop}.
We introduce the rotated defect
\[
\begin{aligned}
&\tilde d(t,\xi):=d(t,\xi)\e^{-\iu \kappa \xi / \eps} \, \e^{-\iu \vartheta  t / \eps^2}\\
&=-2\iu\omega \frac{a^+(t+\tau,\xi)-a^+(t-\tau,\xi)}{2\tau} -(1-c_g^2)\frac{a^+(t,\xi+h)-2a^+(t,\xi)+a^+(t,\xi-h)}{h^2}\\
&\quad -2\eps c_g\frac{a^+(t+\tau,\xi+h)-a^+(t-\tau,\xi+h)-(a^+(t+\tau,\xi-h)-a^+(t-\tau,\xi-h)))}{4\tau h}\\
&\quad +\eps^2\frac{a^+(t,\xi+h)-2a^+(t,\xi)+a^+(t-\tau,\xi)}{\tau^2}+\lambda|a^+|^2a^+,
\end{aligned}
\]    
which up to the $\mathcal{O}(\eps)$ terms in the two last lines (but not entirely) equals the defect of the standard finite difference method applied to \eqref{eq:a+a-}.
For the temporal finite difference we have by Taylor expansion
\[
\frac{a^+(t+\tau,\xi)-a^+(t-\tau,\xi)}{2\tau}=\partial_t a^+(t,\xi)+\tau^2 R^{(1)}_\tau(t,\xi)
\]
where
\[
R^{(1)}_\tau(t,\xi)=\int_{-1}^1\frac{1}{2}(1-|\theta|)^2\partial_t^3 a^+(t+\theta\tau,\xi)\d\theta,
\]
and similarly for the spatial finite differences with $\mathcal{O}(h^2)$ remainder terms in integral form and for the mixed finite difference with $\mathcal{O}(\tau^2+h^2)$ remainder in integral form. In view of equation \eqref{eq:a+a-} for $a^+$ and the dispersion relation \eqref{eq:disp0}, the $\mathcal{O}(1)$ terms cancel, and we obtain
\[
\tilde d(t,\xi)=-2\iu\omega \tau^2R^{(1)}_\tau(t,\xi)-(1-c_g^2)h^2R^{(2)}_h(t,\xi)-2\eps R_\eps(t,\xi).
\]
By the regularity stated in Remark~\ref{rem:regularity}, all derivatives of $a^+$ appearing in the remainder terms, as well as their spatial derivatives, are uniformly bounded in $L^1(\mathbb{R})$, hence the remainder terms satisfy
\[
R^{(1)}_\tau, R^{(2)}_h, R_\varepsilon \in C([0,T],W^{1,1}(\mathbb{R}))
\]
with bounds that are independent of $\tau$, $h$ and $\eps$.
Using \eqref{eq:A-inv} and \eqref{eq:katz} we obtain, uniformly for $t\in[0,T]$,
\[
\|d(t,\cdot)\|_{A(\mathbb{R})}=\|\tilde{d}(t,\cdot)\|_{A(\mathbb{R})}\leq 
c_1 \bigl(\|\tilde{d}(t,\cdot)\|_{L^1} + \|\partial_x\tilde{d}(t,\cdot)\|_{L^1}\bigr)
\le c(\tau^2+h^2+\eps),
\]
which is the desired bound.
\eys
\qed\end{proof}

\bcl
\subsection{Defect bound in the polarized case}\label{subsec:Wiener-pola}
For polarized initial data, the relevant defect is in fact smaller, with $\eps^2$ instead of $\eps$ in the defect bound. In this case, the defect  $d$ is defined as in \eqref{eq:defect}, but now with the $\mathcal{O}(\eps^2)$ approximation from Proposition~\eqref{prop2},
\begin{equation}\label{eq:p-pola}
\wt{p}(t,\xi) = \big(a(t,\xi) + \eps b(t,\xi)\big) \, \e^{\iu \kappa \xi / \eps} \, \e^{\iu \vartheta  t / \eps^2},
\end{equation}
where $a(t,\xi)$ and $b(t,\xi)$ are the solutions of \eqref{eq:ab}.

\begin{lemma}[Defect bound in the Wiener algebra norm, polarizedcase]\label{lem:defect-wiener-polarized}
In the situation of polarized initial data of Theorem~\ref{thm:polarized}, the defect \eqref{eq:defect} with \eqref{eq:p-pola} is bounded in the Wiener algebra norm by
$$
\| d \|_{C([0,T],A(\mathbb{R}))} \le c(\tau^2+h^2+\eps^2),
$$
where $c$ is independent of  $\eps$, $\tau$, $h$, and $n$ with $t_n=n\tau\le T$.
\end{lemma}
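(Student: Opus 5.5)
We follow the proof of Lemma~\ref{lem:defect-wiener}, now carrying the $\eps b$ correction along. The plan is to show that all $\mathcal{O}(1)$ and $\mathcal{O}(\eps)$ contributions cancel exactly by \eqref{eq:ab}, leaving only $\tau^2$, $h^2$ and $\eps^2$ remainders.

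\textbf{Step 1: rotated defect.} Set $A:=a+\eps b$ and consider
\[
\tilde d(t,\xi)=d(t,\xi)\,\e^{-\iu\kappa\xi/\eps}\e^{-\iu\vartheta t/\eps^2}.
\]
Inserting \eqref{eq:p-pola} into \eqref{eq:defect}, the exponential weights cancel the phase factors exactly. By the same algebra as before, $\tilde d$ is the following sum:
\begin{itemize}
\item $-2\iu\omega$ times the central time difference of $A$;
\item $-(1-c_g^2)$ times the central second space difference of $A$;
\item $-2\eps c_g$ times the mixed central difference $\bigl(A(t+\tau,\xi+h)-A(t-\tau,\xi+h)-A(t+\tau,\xi-h)+A(t-\tau,\xi-h)\bigr)/(4\tau h)$;
\item $\eps^2$ times the second time difference of $A$;
\item the nonlinear term $\lambda|A|^2A$.
\end{itemize}
The $\eps^{-2}$ terms vanish by the dispersion relation \eqref{eq:disp0}. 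The $\eps^{-1}$ cross terms cancel by the choice of weights, exactly as in the derivation of \eqref{eq:wlf}.

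\textbf{Step 2: Taylor expansion with integral remainders.} Expand each finite difference as
\[
\text{(exact derivative)}+\tau^2R_\tau+h^2R_h,
\]
with remainders in integral form involving third and fourth derivatives of $a$ and $b$. Expand the nonlinearity as
\[
|a+\eps b|^2(a+\eps b)=|a|^2a+\eps\,(2|a|^2b+a^2\bar b)+\eps^2 R_{\rm nl}.
\]
Then order the terms by powers of $\eps$.
\begin{itemize}
\item At order $\eps^0$: $-2\iu\omega\partial_ta-(1-c_g^2)\partial_\xi^2a+\lambda|a|^2a=0$ by \eqref{eq:ab}.
\item At order $\eps^1$: $-2\iu\omega\partial_tb-(1-c_g^2)\partial_\xi^2b-2c_g\partial_t\partial_\xi a+\lambda(2|a|^2b+a^2\bar b)=0$, again by \eqref{eq:ab}.
\end{itemize}
The two points requiring care are the following.
\begin{itemize}
\item The mixed difference of $a$ carries the factor $\eps$. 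It is therefore needed as $\partial_t\partial_\xi a+\mathcal{O}(\tau^2+h^2)$, and its remainder contributes $\eps(\tau^2+h^2)\le \tau^2+h^2$.
\item The $\eps^2$ second time difference of $A$ is bounded by $\eps^2$ times an integral of $\partial_t^2A$. It is not merely $\mathcal{O}(\eps)$, which is exactly the gain over Lemma~\ref{lem:defect-wiener}.
\end{itemize}
All the leftover terms are of one of these kinds:
\begin{itemize}
\item $\tau^2$ or $h^2$ times averaged derivatives of $a$ and $b$;
\item $\eps^2$ times $\partial_t^2 A$, or the mixed difference of $b$;
\item $\eps^2 R_{\rm nl}$.
\end{itemize}

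\textbf{Step 3: Wiener-norm bound.} Use \eqref{eq:A-inv} and then \eqref{eq:katz} to bound $\|d(t,\cdot)\|_{A(\R)}$ by the $W^{1,1}$ norm of $\tilde d(t,\cdot)$. This requires enough derivatives of $a$ and $b$ to lie in $C([0,T],W^{1,1}(\R))$ with bounds uniform in time.
\begin{itemize}
\item For $a$, Remark~\ref{rem:regularity} provides this.
\item For $b$, it solves a linear Schrödinger equation with potential terms built from $a$ and source $-2c_g\partial_t\partial_\xi a$, which is smooth and polynomially decaying. Persistence of smoothness and weighted decay for such linear equations, via the same references, gives that derivatives of $b$ up to the needed order, including time derivatives obtained from the equation, are in $L^1$ with their $\xi$-derivatives.
\end{itemize}

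\textbf{Main obstacle.} The hardest step is this regularity and decay of $b$, and of $\partial_t^3,\partial_t^4$ of $a$ and $b$, in $W^{1,1}$. I would handle it by differentiating the equations in time, so that time derivatives become spatial ones, and by bounding weighted $H^k$ norms, which imply $W^{1,1}$. The bookkeeping of the $\eps^1$ cancellation in Step 2 is routine but must be checked carefully.
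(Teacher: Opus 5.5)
Your proposal is correct and takes the same route as the paper. The paper's proof just says that the argument of Lemma~\ref{lem:defect-wiener} goes through with both the $\mathcal{O}(1)$ and $\mathcal{O}(\eps)$ terms now cancelling by \eqref{eq:ab}; your rotated defect, Taylor expansion with order-by-order cancellation, and Wiener-norm step via \eqref{eq:A-inv}--\eqref{eq:katz} spell this out, and you also address the regularity of $b$, which the paper leaves implicit.

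Two small corrections to your commentary, neither affecting the argument:
\begin{itemize}
\item The $\eps^{-1}$ multiples of the central $\xi$-difference of $A$ cancel because of the identity $\kappa(1-c_g^2)+c_g\vartheta=0$, i.e.\ because $c_g=\partial_\kappa\omega$. The weights alone do not produce this cancellation.
\item The gain over Lemma~\ref{lem:defect-wiener} comes from the $b$-equation absorbing the term $-2\eps c_g\partial_t\partial_\xi a$. It does not come from the $\eps^2$ second time difference, which was already $\mathcal{O}(\eps^2)$ there.
\end{itemize}
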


\begin{proof}
    The defect bound is obtained as in the previous proof, noting that now
    both the $\mathcal{O}(1)$ and $\mathcal{O}(\eps)$ terms cancel due to \eqref{eq:ab}.
\qed
\end{proof}

\subsection{Defect bounds for the Crank--Nicolson method}
The same defect bounds (possibly with different constants $c$) are obtained for the weighted Crank--Nicolson method, using the same arguments.
\ecl

\section{Stability}\label{sec:stability}


\subsection{Linear stability analysis in the Wiener algebra}
In this subsection we give linear stability results for the weighted finite difference scheme. We bound the numerical solution corresponding to the linear Klein--Gordon equation \eqref{eq:KG} (without the nonlinearity) in the Wiener algebra norm, using Fourier analysis.

We momentarily omit the nonlinearity and interpolate the weighted finite difference scheme \eqref{eq:wlf} from discrete spatial points $\xi_j=jh, j\in\mathbb{Z}$, to arbitrary $\xi\in \mathbb{R}$ by setting 
\begin{equation} \label{eq:wlf-x}
\begin{aligned}
    &\eps^2 \frac{\e^{-\iu\alpha}(1+\iu\alpha)p^{n+1}(\xi) - 2 p^n(\xi) + \e^{\iu\alpha}(1-\iu\alpha)p^{n-1}(\xi)}{\tau^2 } \\
    + &(c_g^2 - 1) \frac{\e^{-\iu\beta}p^n(\xi+h) - 2 p^n(\xi) + \e^{\iu\beta}p^n(\xi-h)}{h^2} \\
    - &2 \eps c_g \left(\frac{\e^{-\iu\alpha}(\e^{-\iu\beta}p^{n+1}(\xi+h) - \e^{\iu\beta}p^{n+1}(\xi-h))}{4 \tau h}\right.\\
    &\qquad\quad\left.- \frac{\e^{\iu\alpha}(\e^{-\iu\beta}p^{n-1}(\xi+h) - \e^{\iu\beta}p^{n-1}(\xi-h))}{4 \tau h}\right.\\
    &\qquad\quad\left.+\frac{\iu\beta(\e^{-\iu\alpha}p^{n+1}(\xi)-\e^{\iu\alpha}p^{n-1}(\xi))}{2\tau h}
    \right) = 0.
\end{aligned}
\end{equation}
We clearly have $p^n(\xi_j)=p^n_j$ of \eqref{eq:wlf} for all $n\ge 2$ if this holds true for $n=0$ and $n=1$. In particular, we have
$\max_j |p^n_j|\le \max_{\xi\in\mathbb{R}} |p^n(\xi)| \le \| p^n \|_{A(\mathbb{R})}$.

\begin{lemma}[Linear stability of the weighted leapfrog method]\label{th:stability}
    Under condition \eqref{eq:stability}, the weighted finite difference algorithm~\eqref{eq:wlf-x} without the nonlinear term is stable: There exists a norm $\vvvert\cdot\vvvert$
    on $A(\mathbb{R}) \times A(\mathbb{R})$,
    equivalent to the norm $\|\cdot\|_{A(\mathbb{R}) \times A(\mathbb{R})}$ uniformly in $\eps,\tau,h$ subject to \eqref{eq:stability}, such that  
\[
\vvvert P^{n}\vvvert = \vvvert P^{n-1}\vvvert, \qquad\text{where}\ \ 
P^{n}=\begin{pmatrix}
        p^{n+1}\\p^{n}
    \end{pmatrix}.
\]
\end{lemma}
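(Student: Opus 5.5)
The plan is to pass to the Fourier side in $\xi$, reduce \eqref{eq:wlf-x} to a scalar two-step recurrence for each frequency $\theta$, show that its characteristic roots are distinct points on the unit circle under \eqref{eq:stability}, and define $\vvvert\cdot\vvvert$ through the modal decomposition.

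\emph{Step 1: reduction to a scalar recurrence.} Write $p^n(\xi)=\int_{\R}\wh p^{\,n}(\theta)\e^{\iu\theta\xi}\,\d\theta$. A shift $\xi\mapsto\xi\pm h$ becomes multiplication by $\e^{\pm\iu\theta h}$. The weighted spatial differences therefore only involve the shifted angle $y=\theta h-\beta$, through $\e^{-\iu\beta}\e^{\iu\theta h}=\e^{\iu y}$. Substituting $r^n(\theta)=\e^{-\iu\alpha n}\wh p^{\,n}(\theta)$ removes the temporal weights $\e^{\mp\iu\alpha}$. After this, \eqref{eq:wlf-x} becomes, for each fixed $\theta$,
\[
c\, r^{n+1}-2b\, r^n+\bar c\, r^{n-1}=0,
\]
with
\[
b=\frac{\eps^2}{\tau^2}-(1-c_g^2)\frac{1-\cos y}{h^2}\in\R,\qquad
c=\frac{\eps^2}{\tau^2}(1+\iu\alpha)-\frac{\iu\eps c_g(\sin y+\beta)}{\tau h}.
\]
The point to verify carefully is that the coefficient of $r^{n-1}$ is exactly the complex conjugate of the coefficient of $r^{n+1}$, and that $b$ is real. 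This is where the symmetric weighted construction, i.e.\ the paired factors $(1\pm\iu\alpha)$ and $\e^{\mp\iu\alpha}$, enters.

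\emph{Step 2: unimodular roots and the norm.} The characteristic polynomial $c\zeta^2-2b\zeta+\bar c$ has root product $\bar c/c$, which has modulus one. Hence both roots lie on the unit circle and are distinct precisely when $b^2<|c|^2$. I would aim to prove the uniform gap
\[
b^2\le(1-\delta)|c|^2\qquad\text{for all } \theta\in\R,
\]
with $\delta>0$ depending only on the constant in \eqref{eq:stability}. Given this, the roots $\zeta_{1,2}(\theta)$ are unimodular and satisfy $|\zeta_1-\zeta_2|\ge 2\sqrt\delta$. We can then write $r^n=A(\theta)\zeta_1^n+B(\theta)\zeta_2^n$, where $(A,B)$ is obtained from $(r^{n+1},r^n)$ by the inverse of the Vandermonde-type matrix with rows $(\zeta_1^{n+1},\zeta_2^{n+1})$ and $(\zeta_1^n,\zeta_2^n)$. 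This inverse is bounded by $c/\sqrt\delta$, and the matrix itself is bounded by $2$. Define
\[
\vvvert P^n\vvvert=\int_{\R}\bigl(|A_n(\theta)|+|B_n(\theta)|\bigr)\,\d\theta,
\]
where $(A_n,B_n)$ are the modal coefficients of $(\wh p^{\,n+1},\wh p^{\,n})$. The phase factors $\e^{-\iu\alpha n}$ have modulus one, so they do not affect moduli. Since $|\zeta_{1,2}|=1$, one step only multiplies $A$ and $B$ by unimodular factors, so $\vvvert P^n\vvvert=\vvvert P^{n-1}\vvvert$ exactly. Pointwise in $\theta$, the bounds on the transformation matrices give the equivalence with $\|\wh p^{\,n+1}\|_{L^1}+\|\wh p^{\,n}\|_{L^1}=\|P^n\|_{A(\R)\times A(\R)}$, with constants depending only on $\delta$.

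\emph{Step 3 (main obstacle): the uniform gap.} Multiply by $\tau^2/\eps^2$ and set $\mu=\tau/(\eps h)$. Write $s=\sin y$ and $k=1-\cos y\in[0,2]$. Then
\[
\tilde b=1-(1-c_g^2)\,\frac{\tau^2 k}{\eps^2h^2}=1-(1-c_g^2)\mu^2k,
\qquad
\tilde c=1+\iu\bigl(\alpha-c_g\mu(s+\beta)\bigr).
\]
The requirement is $\tilde b^2\le(1-\delta)\bigl(1+(\alpha-c_g\mu(s+\beta))^2\bigr)$. This holds automatically whenever $\tilde b\in[-1,1]$ with some margin, so the dangerous regime is $(1-c_g^2)\mu^2k>2$.

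I would split into two cases.
\begin{itemize}
\item For $|\beta|\le1$, the condition \eqref{eq:stability} gives $\mu\le\gamma|\omega|$, which is the classical leapfrog CFL bound. It keeps $(1-c_g^2)\mu^2k$ below $2$ with a margin, using $(1-c_g^2)\omega^2=1$ and $\gamma\le 1/(2|\kappa|)$.
\item For large $|\beta|$, $\mu$ may be as large as $\sim|\beta|$, so $\tilde b$ can be large and negative. Here one must exploit the imaginary part: $\alpha=-\mu\eps h/(\omega\eps^2)$, and $c_g\mu\beta=\mu\kappa^2 h/(\omega\eps)$. One should check that $\tilde b^2$ is dominated by $(\alpha-c_g\mu(s+\beta))^2$, whose size is of order $\mu|\beta|$ times $O(1)$ after using the dispersion relation \eqref{eq:disp0}. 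A polynomial comparison in $\mu$ then holds for all $y$, provided $\mu\le\gamma|\omega|$ with $\gamma\sim\min(1/(2|\kappa|),1)|\beta|$.
\end{itemize}
The second case is where I expect the real work. The delicate values of $y$ are those where $\sin y+\beta$ partially cancels $\alpha/(c_g\mu)$, and I would handle them by bounding $k\le 2$ while using $|s|\le1\ll|\beta|$.
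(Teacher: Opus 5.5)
Your Steps 1--2 follow the paper's proof. After the gauge $r^n=\e^{-\iu\alpha n}\wh p^{\,n}$, your $\tilde b$ and $\tilde c$ are exactly the paper's $\wt\gamma_\theta$ and $1+\iu\gamma_\theta$. The unimodular roots, the Vandermonde diagonalization and the modal norm are the same construction.

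\textbf{The gap is in Step 3.} A bound $\tilde b^2\le(1-\delta)|\tilde c|^2$ with $\delta$ depending only on the constant in \eqref{eq:stability} does not exist.
\begin{itemize}
\item At $y=0$, i.e.\ at the carrier frequency $\theta=\kappa/\eps$, you have $k=0$. Hence $\tilde b=1$ for every choice of $\tau,h,\eps$.
\item At the same point, $\tilde c=1+\iu(\alpha-c_g\mu\beta)=1+\iu\omega^2\alpha=1-\iu\omega\tau/\eps^2$.
\item Thus $\tilde b^2/|\tilde c|^2=1/(1+\omega^2\tau^2/\eps^4)\to1$ as $\tau/\eps^2\to0$. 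Condition \eqref{eq:stability} bounds $\tau$ only from above, so it does not prevent this.
\end{itemize}
Your remark that the gap ``holds automatically whenever $\tilde b\in[-1,1]$ with some margin'' misses this point. Near $y=0$ there is never a margin in $\tilde b$, so the gap must come from $\mathrm{Im}\,\tilde c$, which is only of size $\omega\tau/\eps^2$ there. Similarly, for $|\kappa|\le\tfrac12$ your first bullet yields only $\tilde b\ge-1$. Equality holds at $y=\pi$ when $\mu=\omega$, where again $\mathrm{Im}\,\tilde c=\omega^2\alpha$.

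This is not a technicality of the proof. At $y=0$ the roots are $1$ and $\bar{\tilde c}/\tilde c$, at angular distance $\psi=2\arctan(\omega\tau/\eps^2)$. Take $p^0=0$ and $\wh p^{\,1}$ a narrow bump around $\kappa/\eps$. Then $|\wh p^{\,n}|\approx|\wh p^{\,1}|\,|\sin(n\psi/2)/\sin(\psi/2)|$, so $\|p^n\|_{A(\R)}$ grows to about $(\eps^2/(\omega\tau))\,\|p^1\|_{A(\R)}$ by $t_n\approx\pi\eps^2/(2\omega)$. Hence no conserved norm can be equivalent to $\|\cdot\|_{A(\R)\times A(\R)}$ uniformly as $\tau/\eps^2\to0$.

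\textbf{The paper's proof has the same weak point.} It asserts $\wt\gamma_\theta^2/(1+\gamma_\theta^2)\le\mu<1$ ``by condition \eqref{eq:stability}'', although $\wt\gamma_\theta=1$ and $\gamma_\theta=\omega^2\alpha$ at $\theta h\equiv\beta$. So your proposal reproduces the paper's argument, including this step. Under \eqref{eq:stability} alone, the Vandermonde argument really gives only $\|V(\theta)\|\le2$ and $\|V(\theta)^{-1}\|\le C(1+\eps^2/\tau)$.

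\textbf{How to repair it.} A uniform statement needs an extra hypothesis such as $\tau\ge c_0\eps^2$, which is the regime the paper cares about. Then your Step 3 goes through with $\delta=\delta(c_0)$. The useful tool is the identity $\alpha-c_g\mu(\sin y+\beta)=\alpha(\omega^2+\kappa^2\sin y/\beta)$, which follows from $c_g\mu\beta=-\kappa^2\alpha$.
\begin{itemize}
\item For $|\beta|>1$ there is no cancellation in $\mathrm{Im}\,\tilde c$, contrary to what you anticipate: $|\mathrm{Im}\,\tilde c|\ge|\alpha|(\omega^2-\kappa^2/|\beta|)\ge|\alpha|$.
\item In that case $|\tilde b|\le1$ is handled by $|\alpha|\ge c_0/\omega$. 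If instead $|\tilde b|>1$, then $|\tilde b|<2\kappa^2\alpha^2/\beta^2\le|\alpha|$.
\item For uniformity as $|\alpha|\to\infty$, use that then $|\beta|\to\infty$, so $\omega^2-\kappa^2/|\beta|\to\omega^2>1$.
\item For $|\beta|\le1$, the parameters $(\alpha,\beta)$ range over a compact set with $\beta$ bounded away from $0$. On that set $\tilde b^2<|\tilde c|^2$ holds pointwise, which gives a uniform $\delta$.
\end{itemize}
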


\begin{proof}
    Let $\wh{p}^n(\theta)$ be the Fourier transform of $p^n(\xi)$, i.e., 
    \[
    p^n(\xi)=\frac{1}{2\pi}\int_\mathbb{R}\wh{p}^n(\theta)\e^{\iu\theta\xi}\d\theta.
    \]
    Substituting this into \eqref{eq:wlf-x} yields, for all $\xi$,
    \[
    \begin{aligned}
    \frac{1}{2\pi}\int_\mathbb{R}\e^{\iu \theta \xi}\Bigl(&\eps^2\frac{\e^{-\iu\alpha}(1+\iu\alpha)\wh{p}^{n+1}(\theta)-2\wh{p}^n(\theta)+\e^{\iu\alpha}(1-\iu\alpha)\wh{p}^{n-1}(\theta)}{\tau^2}\\
    &-(1-c_g^2)\frac{2(\cos(\beta-\theta h)-1)}{h^2}\wh{p}^n(\theta)\\
    &+\iu\eps c_g\left(\sin(\beta-\theta h)-\beta\right)\frac{\e^{-\iu\alpha}\wh{p}^{n+1}(\theta)-\e^{\iu\alpha}\wh{p}^{n-1}(\theta)}{\tau h}\Bigr)\d\theta=0.
    \end{aligned}
    \]
   We then have
    \[
    \begin{aligned}
   &\eps^2\frac{\e^{-\iu\alpha}(1+\iu\alpha)\wh{p}^{n+1}(\theta)-2\wh{p}^n(\theta)+\e^{\iu\alpha}(1-\iu\alpha)\wh{p}^{n-1}(\theta)}{\tau^2}\\
   &-(1-c_g^2)\frac{2(\cos(\beta-\theta h)-1)}{h^2}\wh{p}^n(\theta)\\
    &+\iu\eps c_g\left(\sin(\beta-\theta h)-\beta\right)\frac{\e^{-\iu\alpha}\wh{p}^{n+1}(\theta)-\e^{\iu\alpha}\wh{p}^{n-1}(\theta)}{\tau h}=0,
    \end{aligned}
    \]
    which is equivalent to the system
    \[
    \begin{pmatrix}
        \wh{p}^{n+1}(\theta)\\
        \wh{p}^{n}(\theta)
    \end{pmatrix}
     =G(\theta)\begin{pmatrix}
         \wh{p}^{n}(\theta)\\
         \wh{p}^{n-1}(\theta)
     \end{pmatrix},
    \]
    where
    \begin{equation}\label{eq:matrix}
          G(\theta)=\begin{pmatrix}
        \frac{2\wt{\gamma}_\theta \e^{\iu\alpha}}{1+\iu \gamma_\theta } & -\frac{(1-\iu \gamma_\theta )\e^{2\iu\alpha}}{1+\iu \gamma_\theta }\\
        1 & 0
    \end{pmatrix} 
    \end{equation}
    with
    \[
    \begin{aligned}
    \gamma_\theta &=\alpha+\frac{c_g\tau}{\eps h}(\sin(\beta-\theta h)-\beta)\\
    &=\alpha+\alpha\kappa^2(1-\sin(\beta-\theta h)/\beta)\\
    \wt{\gamma}_\theta &=1+ (1-c_g^2)\frac{\tau^2}{\eps^2h^2}\left(\cos(\beta-\theta h)-1\right)\\
    &=1-\kappa^2\frac{\alpha^2}{\beta^2}(1-\cos(\beta-\theta h)).
    \end{aligned}
    \]
To obtain the simplified expressions for $\gamma_\theta $ and $\wt{\gamma}_\theta $, we have used \eqref{eq:alpha-beta}.

    Let $\lambda_\theta ^{+}, \lambda_\theta ^{-}$ be the two roots of the characteristic polynomial
    \[
    \rho_\theta (\zeta)=\e^{-\iu\alpha}(1+\iu \gamma_\theta )\zeta^2-2\wt{\gamma}_\theta \zeta+\e^{\iu\alpha}(1-\iu \gamma_\theta ),
    \]
    namely, 
    \[
    \lambda_\theta ^{\pm}=\e^{\iu\alpha}\frac{\wt{\gamma}_\theta \pm\iu\sqrt{(1+\gamma_\theta ^2)-\wt{\gamma}_\theta ^2}}{(1+\iu \gamma_\theta )}.
    \]
    By condition \eqref{eq:stability},
\[
|\alpha|
\le
\min(1/(2\kappa^2),1/{|\kappa|})
\max(|\beta|,1)\,|\beta|.
\]
    For $|\beta|\le1$, we have $\alpha^2\leq\beta^2/\kappa^2$ and thus
    $|\wt{\gamma}_\theta|\le1$. 
    For $|\beta|>1$, we have $|\alpha|\le \beta^2/(2\kappa^2)$. Moreover, if
$|\wt{\gamma}_\theta|>1$, then
\[
|\wt{\gamma}_\theta|
<
\kappa^2\frac{\alpha^2}{\beta^2}(1-\cos(\beta-\theta h))
\le
2\kappa^2\frac{\alpha^2}{\beta^2}
\le
|\alpha|
\le
|\gamma_\theta|.
\]
Therefore, $
    |\wt{\gamma}_\theta | <\max(1,|\gamma_\theta |)
    $. This implies $1+\gamma_\theta^2>\wt{\gamma}_\theta^2$ and thus $|\lambda_\theta ^{\pm}|=1.$
    
    The vectors $(\lambda_\theta ^+,1)^\top$ and $(\lambda_\theta ^-,1)^\top$ are eigenvectors of $G(\theta)$ with eigenvalue $\lambda_\theta ^+$ and $\lambda_\theta ^-$, respectively. This is because (similar for $\lambda_\theta ^-$)
    \[
    \begin{pmatrix}
        \frac{2\wt{\gamma}_\theta\e^{\iu\alpha}}{1+\iu \gamma_\theta } & -\frac{(1-\iu \gamma_\theta )\e^{2\iu\alpha}}{1+\iu \gamma_\theta }\\
        1 & 0
    \end{pmatrix}\begin{pmatrix}
        \lambda_\theta ^+\\1
    \end{pmatrix}=\begin{pmatrix}
        \frac{(2\wt{\gamma}_\theta\e^{\iu\alpha}\lambda_\theta ^+-(1-\iu \gamma_\theta )\e^{2\iu\alpha}}{1+\iu \gamma_\theta }\\ \lambda_\theta ^+
    \end{pmatrix}=\lambda_\theta ^+\begin{pmatrix}
        \lambda_\theta ^+\\1
    \end{pmatrix}.
    \]
    Therefore $G(\theta)$ is diagonalizable,
    \begin{equation}\label{eq:diagonal}
    V(\theta) ^{-1}G(\theta)V(\theta) =\Lambda(\theta) =\text{diag}\{\lambda_\theta ^+,\lambda_\theta ^-\} 
    \quad\text{with} \quad
    V(\theta) = \begin{pmatrix}
        \lambda_\theta ^+ & \lambda_\theta ^-
        \\
        1 & 1
    \end{pmatrix}
    \end{equation}
    and $\Lambda(\theta)$ is a unitary matrix. Using the transformation matrix $V(\theta) $, we have, for any vector $y\in\mathbb{C}^2$,
     \[
    |V(\theta)^{-1}G(\theta) y|_{2}=|\Lambda(\theta)  V(\theta)^{-1}y|_{2} 
    =|V(\theta)^{-1}y|_{2}.
    \]
    Therefore,
    \[
    \begin{aligned}
     \vvvert P^{n}\vvvert 
    :=&\int_\mathbb{R} \left|V(\theta)^{-1} 
    \begin{pmatrix}
        \wh{p}^{n+1}(\theta)\\\wh{p}^{n}(\theta)
    \end{pmatrix}
    \right|_2\d\theta
    =\int_\mathbb{R} \left|V(\theta)^{-1} G(\theta)
    \begin{pmatrix}
        \wh{p}^{n}(\theta)\\\wh{p}^{n-1}(\theta)
    \end{pmatrix}
    \right|_2\d\theta
    \\
    =&\int_\mathbb{R} \left|V(\theta)^{-1} 
    \begin{pmatrix}
        \wh{p}^{n}(\theta)\\\wh{p}^{n-1}(\theta)
    \end{pmatrix}
    \right|_2\d\theta
    =\vvvert P^{n-1}\vvvert.
    \end{aligned}
    \]
    Finally, we show that 
\[
\|V(\theta) \|_2\leq C_1,\quad \|V(\theta) ^{-1}\|_2\leq C_2,\quad \forall\theta\in\mathbb{R},
\]
which yields that the newly introduced norm $\vvvert\cdot\vvvert$ is equivalent to $\|\cdot\|_{A(\mathbb{R}) \times A(\mathbb{R})}$. 
    Since
    \[
    V(\theta) ^*V(\theta) =
    \begin{pmatrix}
    2 &1+\overline{\lambda_\theta ^+}\lambda_\theta ^-\\
    1+\overline{\lambda_\theta ^-}\lambda_\theta ^+ &2
    \end{pmatrix},
    \]
the eigenvalues of $V(\theta) ^*V(\theta) $ can be calculated as $2\left(1\pm\sqrt{\frac{\wt{\gamma}_\theta ^2}{(1+\gamma_\theta ^2)}}\right)$. Since $\frac{\wt{\gamma}_\theta ^2}{(1+\gamma_\theta ^2)}\leq\mu< 1$ by condition~\eqref{eq:stability}, we have for all $k$ that
\[
\begin{aligned}
\|V(\theta) \|_2&
=\sqrt{\lambda_{\text{max}}(V(\theta) ^*V(\theta) )}<2, 
\\
\|V(\theta) ^{-1}\|_2&
=1/\sqrt{\lambda_{\text{min}}(V(\theta) ^*V(\theta) )}\leq 1/\sqrt{2(1-\mu)}, 
\end{aligned}
\]
so that 
$$\tfrac12 \,\| P \|_{A(\mathbb{R}) \times A(\mathbb{R})} \le \vvvert P \vvvert \le \frac1{\sqrt{2(1-\mu)}}\,\| P\|_{A(\mathbb{R}) \times A(\mathbb{R})}
$$
for all $P\in A(\mathbb{R}) \times A(\mathbb{R})$.
\qed\end{proof}

\begin{lemma}[Linear stability of the weighted Crank--Nicolson method]
\label{lem:stability-cn}
    The weighted Crank--Nicolson algorithm~\eqref{eq:wcn} without the nonlinear term is unconditionally stable in the sense that there exists a norm $\vvvert\cdot\vvvert$
    on $A(\mathbb{R}) \times A(\mathbb{R})$,
    equivalent to the norm $\|\cdot\|_{A(\mathbb{R}) \times A(\mathbb{R})}$ uniformly in $\eps,\tau,h$, such that
\[
\vvvert P^{n}\vvvert = \vvvert P^{n-1}\vvvert, \qquad\text{where}\ \ 
P^{n}=\begin{pmatrix}
        p^{n+1}\\p^{n}
    \end{pmatrix}.
\]
\end{lemma}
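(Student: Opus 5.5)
The plan is to follow the proof of Lemma~\ref{th:stability}: take the Fourier transform of the continuously interpolated linear scheme, write the two-step recursion as $\wh P^{n}(\theta)=G_{\rm CN}(\theta)\wh P^{n-1}(\theta)$, and show that the roots of the characteristic polynomial lie on the unit circle with well-conditioned eigenvectors, now for \emph{every} $\alpha,\beta$. First, interpolate \eqref{eq:wcn} without the nonlinear term to $\xi\in\R$, exactly as in \eqref{eq:wlf-x}. Insert $p^n(\xi)=\frac1{2\pi}\int\wh p^n(\theta)\e^{\iu\theta\xi}\d\theta$ and use $\breve p^n=\tfrac12(\e^{-\iu\alpha}p^{n+1}+\e^{\iu\alpha}p^{n-1})$. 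With $s=\beta-\theta h$, the weighted spatial second difference has symbol $2(\cos s-1)+2\beta\sin s$, and the $\iu\alpha$ mixed term has symbol $\alpha\sin s/(\tau h)$. Both are \emph{real}, and both multiply $\e^{-\iu\alpha}\wh p^{n+1}+\e^{\iu\alpha}\wh p^{n-1}$. The temporal and $\iu\beta$/mixed $(n\pm1)$ terms produce the same real quantity $\gamma_\theta$ as in the leapfrog proof.

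Multiplying by $\tau^2/\eps^2$ and substituting $z=\e^{-\iu\alpha}\zeta$, the characteristic polynomial therefore takes the form
$$
(A_\theta+\iu\gamma_\theta)z^2-2z+(A_\theta-\iu\gamma_\theta)=0,
$$
with real $A_\theta=1+\sigma_\theta$. Here $\sigma_\theta$ collects the spatial contributions, scaled by $\tau^2/(\eps^2h^2)$ and simplified with \eqref{eq:alpha-beta}. The coefficients are conjugate-reciprocal, so the product of the roots has modulus $1$. The roots
$$
z^\pm=\frac{1\pm\iu\sqrt{A_\theta^2+\gamma_\theta^2-1}}{A_\theta+\iu\gamma_\theta}
$$
thus have modulus one exactly when $A_\theta^2+\gamma_\theta^2\ge1$. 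In contrast to the leapfrog case, the $\theta$-dependent spatial part now sits in the leading and trailing coefficients instead of the middle one. This is what should remove the CFL condition: the middle coefficient is the constant $2$.

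The key step, and the one I expect to be the main obstacle, is to prove
$$
\frac{1}{A_\theta^2+\gamma_\theta^2}\le\mu<1
$$
uniformly in $\theta,\eps,\tau,h$. I would rewrite $\sigma_\theta$ and $\gamma_\theta$ in terms of $\alpha,\beta,\kappa,s$ and use the dispersion relation \eqref{eq:disp0}, aiming to show that the combination is a sum of nonnegative terms, such as $(1-c_g^2)(\tau/\eps h)^2(1-\cos s)\ge0$ plus $\gamma_\theta^2$. If $\sigma_\theta$ is not sign-definite, I would instead bound $A_\theta^2+\gamma_\theta^2-1$ from below by completing a square with the $\gamma_\theta^2$ term. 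If only the nonstrict inequality $A_\theta^2+\gamma_\theta^2\ge1$ can be obtained, with equality at isolated $\theta$ such as $s=0$ (where the roots coalesce), I would replace the eigenvector basis near such $\theta$ by a bounded basis adapted to the Crank--Nicolson structure. An alternative is an energy-type norm built from $\e^{-\iu\alpha}\wh p^{n+1}\pm\wh p^n$, which is conserved because the symbol is a Cayley-type transform.

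Given the strict bound, the conclusion is as in Lemma~\ref{th:stability}. Take $V(\theta)=\begin{pmatrix}\lambda^+_\theta&\lambda^-_\theta\\1&1\end{pmatrix}$ with $\lambda^\pm_\theta=\e^{\iu\alpha}z^\pm$, which are unimodular. The eigenvalues of $V^*V$ are $2\bigl(1\pm(A_\theta^2+\gamma_\theta^2)^{-1/2}\bigr)$. This gives $\|V\|_2<2$ and $\|V^{-1}\|_2\le1/\sqrt{2(1-\sqrt\mu)}$. Finally, define $\vvvert P^n\vvvert=\int_\R|V(\theta)^{-1}\wh P^n(\theta)|_2\,\d\theta$. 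This norm is conserved and is equivalent to the $A(\R)\times A(\R)$ norm.
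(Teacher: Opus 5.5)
Your route is the paper's route: Fourier symbol, companion matrix, unimodular roots, and the eigenvector norm. Your $A_\theta+\iu\gamma_\theta$ is exactly the paper's complex $\gamma_\theta$. The sign question you leave open is settled by an exact cancellation that you did not carry out. The two $\sin s$ contributions add up to $-\frac{2\sin s}{\eps h}\bigl((1-c_g^2)\kappa+c_g\vartheta\bigr)$ times the symbol of $\breve p$. This vanishes because $(1-c_g^2)\kappa=\kappa/\omega^2=-c_g\vartheta$. Hence $\sigma_\theta=\frac{(1-c_g^2)\tau^2}{\eps^2h^2}(1-\cos s)\ge 0$, which is why the paper writes the spatial symbol directly as $2(\cos(\beta-\theta h)-1)$. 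So $A_\theta\ge 1$, and the roots are unimodular for all $\alpha,\beta$.

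The genuine gap is the key step itself. The bound $(A_\theta^2+\gamma_\theta^2)^{-1}\le\mu<1$ uniformly in $\eps,\tau,h$ is false.
\begin{itemize}
\item At $s=0$ you have $A_\theta=1$ and $\gamma_\theta=\alpha\omega^2=-\omega\tau/\eps^2$, so $A_\theta^2+\gamma_\theta^2=1+\omega^2\tau^2/\eps^4\to 1$ as $\tau/\eps^2\to 0$. The roots $z=1$ and $z=(1-\iu\gamma_\theta)/(1+\iu\gamma_\theta)$ do not coalesce there, contrary to what you suggest, but their distance is only about $2\omega\tau/\eps^2$.
\item The correct statement follows from $\gamma_\theta=\frac{\tau}{\omega\eps^2}\bigl(\kappa\frac{\eps}{h}\sin s-\omega^2\bigr)$, the inequality $2(1-\cos s)\ge\sin^2 s$, and $y^2+(\kappa y-\omega^2)^2\ge\omega^2$. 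Together these give $A_\theta^2+\gamma_\theta^2\ge 1+\tau^2/\eps^4$.
\item Consequently $\sup_\theta\|V(\theta)^{-1}\|_2$ is bounded uniformly only for $\tau\ge c\,\eps^2$, and in that case your argument goes through verbatim with $\mu=(1+c^2)^{-1}$. Otherwise it grows like $\eps^2/\tau$.
\end{itemize}

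Neither of your fallbacks (a different basis near $s=0$, or an energy norm) can repair this, because the growth is in the dynamics itself. Take the $s=0$ mode with $w^0=0$, $w^1=1$. The recursion $(1+\iu\gamma_\theta)w^{n+1}-2w^n+(1-\iu\gamma_\theta)w^{n-1}=0$ then produces $|w^n|$ of size about $\eps^2/(\omega\tau)$ at $t_n\approx\pi\eps^2/(2\omega)$. So when $\tau\ll\eps^2$, no exactly conserved norm can be uniformly equivalent to $\|\cdot\|_{A(\R)\times A(\R)}$.

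The paper's proof has the same weak point. It only asserts $|\gamma_\theta|^2>1$ and defers to the leapfrog argument, whose bound $\le\mu<1$ is likewise not uniform as $\tau/\eps^2\to 0$. What this route actually proves, yours or the paper's, is the lemma with equivalence constants uniform for $\tau\ge c\,\eps^2$, the regime the method targets, and depending on $\eps^2/\tau$ otherwise.
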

\begin{proof}
    Substituting the Fourier transform of $p^n$ into ~\eqref{eq:wcn} without the nonlinear term yields
\[
\begin{aligned}
&\eps^2\frac{\e^{-\iu\alpha}(1+\iu\alpha)\wh{p}^{n+1}(\theta)-2\wh{p}^n(\theta)+\e^{\iu\alpha}(1-\iu\alpha)\wh{p}^{n-1}(\theta)}{\tau^2}\\
&\quad -(1-c_g^2)\frac{2(\cos(\beta-\theta h)-1)}{h^2}\frac{\e^{-\iu\alpha}\wh{p}^{n+1}(\theta)+\e^{\iu\alpha}\wh{p}^{n-1}(\theta)}{2}\\
&\quad +\iu\eps c_g\left(\sin(\beta-\theta h)-\beta\right)
\frac{\e^{-\iu\alpha}\wh{p}^{n+1}(\theta)-\e^{\iu\alpha}\wh{p}^{n-1}(\theta)}{\tau h}
=0,
\end{aligned}
\]
which is equivalent to the system
    \[
    \begin{pmatrix}
        \wh{p}^{n+1}(\theta)\\
        \wh{p}^{n}(\theta)
    \end{pmatrix}
     =G(\theta)\begin{pmatrix}
         \wh{p}^{n}(\theta)\\
         \wh{p}^{n-1}(\theta)
     \end{pmatrix},
    \]
where
\[
G(\theta)=\begin{pmatrix}
\frac{2 \e^{\iu\alpha}}{\gamma_\theta} & -\frac{\overline{\gamma_\theta} \e^{2\iu\alpha}}{\gamma_\theta} \\
1 & 0
\end{pmatrix},
\]
with $\gamma_\theta = 1 + \frac{(1-c_g^2)\tau^2}{\eps^2 h^2}\bigl(1-\cos(\beta-\theta h)\bigr)
+ \iu\alpha + \iu \frac{c_g \tau}{\eps h} \bigl(\sin(\beta-\theta h)-\beta\bigr)$.
Since $|\gamma_\theta|^2 > 1$,
the eigenvalues of $G(\theta)$ are
\[
\lambda_\theta^\pm = \frac{\e^{\iu\alpha}}{\gamma_\theta} \left( 1 \pm \iu\sqrt{|\gamma_\theta|^2-1} \right)
\]
and $|\lambda_\theta^\pm| = 1$ for all $\theta$. Following the same procedure as in the proof of Lemma~\ref{lem:stability} yields the desired result.
\qed\end{proof}

\subsection{Nonlinear stability}\label{subsec:nonlinear}

\begin{lemma}[Nonlinear stability of the weighted leapfrog method] \label{lem:stability}
    Let the function $\wt{p}\in C([0,T],A(\mathbb{R}))$ be arbitrary and let the corresponding defect $d$ be defined by \eqref{eq:defect}.
    Under condition \eqref{eq:stability}, the interpolated numerical solution of \eqref{eq:wlf}, interpolated to all $\xi\in\mathbb{R}$ as in \eqref{eq:wlf-x} (but now with the nonlinear term included), satisfies the bound, for $t_n=n\tau\le T$
    \begin{align*}
    &\|p^{n}-\wt{p}(t_n,\cdot)\|_{A(\mathbb{R})} 
    \\
    &\qquad \le C \Bigl(\|p^{0}-\wt{p}(0,\cdot)\|_{A(\mathbb{R})}+\|p^{1}-\wt{p}(t_1,\cdot)\|_{A(\mathbb{R})}
    + \| d \|_{C([0,T],A(\mathbb{R}))} \Bigr),
    \end{align*}
    where $C$ is independent of $\eps$, $\tau$, $h$, and $n$ with $t_n\le T$, but depends on $T$ and on upper bounds of the above term in big brackets and of the $C([0,T],A(\mathbb{R}))$ norm of $\tilde p$.
\end{lemma}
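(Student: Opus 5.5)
We subtract the defect equation from the interpolated scheme and derive a linear two-step recursion for the error $e^n:=p^n-\wt p(t_n,\cdot)$. Let $L$ denote the linear weighted leapfrog operator on the left-hand side of \eqref{eq:wlf-x}. The scheme reads $L(p^{n+1},p^n,p^{n-1})+\lambda|p^n|^2p^n=0$, and \eqref{eq:defect} states $L(\wt p(t_{n+1}),\wt p(t_n),\wt p(t_{n-1}))+\lambda|\wt p(t_n)|^2\wt p(t_n)=d(t_n)$. Subtracting gives
\[
L(e^{n+1},e^n,e^{n-1}) = -\lambda\bigl(|p^n|^2p^n-|\wt p(t_n)|^2\wt p(t_n)\bigr) - d(t_n) =: g^n .
\]
In Fourier variables, exactly as in the proof of Lemma~\ref{th:stability}, this becomes
\[
\begin{pmatrix}\wh e^{n+1}\\ \wh e^{n}\end{pmatrix}=G(\theta)\begin{pmatrix}\wh e^{n}\\ \wh e^{n-1}\end{pmatrix}+ \frac{\tau^2}{\eps^2}\,\frac{\e^{\iu\alpha}}{1+\iu\gamma_\theta}\begin{pmatrix}\wh g^n\\ 0\end{pmatrix}.
\]
The inhomogeneity comes from dividing by the coefficient $\eps^2\e^{-\iu\alpha}(1+\iu\gamma_\theta)/\tau^2$ of $\wh p^{n+1}$.

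The key step, and the main obstacle, is to show that the factor multiplying $\wh g^n$ is $\mathcal O(\tau)$ uniformly in $\eps$, $h$ and $\theta$. Otherwise summation over $n\le T/\tau$ steps does not give an $\mathcal O(1)$ amplification. The naive factor $\tau^2/\eps^2$ is huge when $\tau\gg\eps^2$. It is compensated by $|1+\iu\gamma_\theta|\ge|\gamma_\theta|$, where $\gamma_\theta=\alpha\bigl(1+\kappa^2(1-\sin(\beta-\theta h)/\beta)\bigr)$. For $|\beta|\ge1$ the bracket is bounded below by a positive constant; we check this with care, since $1-\sin(\cdot)/\beta\ge 1-1/|\beta|\ge0$, so the bracket is at least $1$. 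Hence
\[
|\gamma_\theta|\ge|\alpha|=\frac{\tau}{\omega\eps^2},
\qquad
\frac{\tau^2}{\eps^2|1+\iu\gamma_\theta|}\le\frac{\tau^2}{\eps^2}\min\Bigl(1,\frac{\omega\eps^2}{\tau}\Bigr)\le \omega\tau .
\]
For $|\beta|<1$ the bracket may be close to $1-\kappa^2(\dots)$. We expect a lower bound $1+\kappa^2(1-\sin(\beta-\theta h)/\beta)\ge 1-\kappa^2\cdot\mathcal O(1)$ to fail in general. In that case we fall back on the CFL condition \eqref{eq:stability}, which there gives $\tau\le c\,\eps h\lesssim \eps^2$, since $h<\eps/|\kappa|$. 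Then $\tau^2/\eps^2\le c\,\tau$ directly, using $|1+\iu\gamma_\theta|\ge1$. Either way the inhomogeneity is bounded by $C\tau|\wh g^n(\theta)|$.

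Next we transform with $V(\theta)^{-1}$ from \eqref{eq:diagonal}. Since $\Lambda(\theta)$ is unitary and $V$, $V^{-1}$ are uniformly bounded (Lemma~\ref{th:stability}), we get in the norm $\vvvert\cdot\vvvert$, with $E^n=(e^{n+1},e^n)^\top$,
\[
\vvvert E^n\vvvert\le\vvvert E^{n-1}\vvvert+C\tau\|g^n\|_{A(\R)},
\qquad
\vvvert E^n\vvvert\le \vvvert E^0\vvvert+C\tau\sum_{k=1}^n\|g^k\|_{A(\R)} .
\]
Here $\vvvert E^0\vvvert$ is controlled by the initial errors $\|e^0\|_{A}+\|e^1\|_{A}$.

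Finally we handle the nonlinearity by the algebra property \eqref{eq:alge}, with the same bootstrap argument as in part (c) of the proof of Proposition~\ref{prop}. Let $M$ bound $\|\wt p\|_{C([0,T],A(\R))}$. As long as $\|p^k\|_{A}\le 2M$ for $k\le n$, we have
\[
\bigl\||p^k|^2p^k-|\wt p|^2\wt p\bigr\|_A\le 12M^2\|e^k\|_A,
\qquad\text{hence}\qquad
\|g^k\|_A\le 12|\lambda|M^2\|e^k\|_A+\|d\|_{C([0,T],A)}.
\]
The discrete Gronwall inequality and the norm equivalence then give the stated bound with $C$ depending on $\e^{CT}$. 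The smallness of the bracket (or an upper bound on it, enlarging $M$ accordingly) ensures $\|p^{n+1}\|_A\le 2M$, which closes the induction.
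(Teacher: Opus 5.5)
Your proposal is correct and follows the paper's own route: the error recursion in Fourier variables with the amplification matrix $G(\theta)$, an $\mathcal{O}(\tau)$ bound for the factor $\tau^2/(\eps^2|1+\iu\gamma_\theta|)$, the norm $\vvvert\cdot\vvvert$ of Lemma~\ref{th:stability}, the algebra property \eqref{eq:alge} for the cubic term, and the discrete Gronwall inequality. You add two justifications the paper leaves out. Your case split, $|\beta|\ge 1$ (where $|\gamma_\theta|\ge|\alpha|$) versus $|\beta|<1$ (where the CFL condition gives $\tau\lesssim\eps^2$), proves the $\mathcal{O}(\tau)$ factor that the paper only asserts, and your bootstrap makes explicit the a priori bound on $\|p^n\|_{A(\R)}$ that the paper uses tacitly. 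One caveat: closing that induction genuinely requires the bracket to be small relative to $M$ and $T$, and for the cubic nonlinearity your parenthetical fix of enlarging $M$ does not work when the bracket is merely bounded.
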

\begin{proof}  
We define the error function $e^n(\xi)=p^{n}(\xi)-\wt{p}(t_n,\xi)$, which satisfies
\begin{equation}
\begin{aligned}\label{eq:error}
    &\eps^2 \frac{\e^{-\iu\alpha}(1+\iu\alpha)e^{n+1}(\xi) - 2 e^n(\xi) + \e^{\iu\alpha}(1-\iu\alpha)e^{n-1}(\xi)}{\tau^2 } \\
    + &(c_g^2 - 1) \frac{\e^{-\iu\beta}e^n(\xi+h) - 2 e^n(\xi) + \e^{\iu\beta}e^n(\xi-h)}{h^2} \\
    - &2 \eps c_g \left(\frac{\e^{-\iu\alpha}(\e^{-\iu\beta}e^{n+1}(\xi+h) - \e^{\iu\beta}e^{n+1}(\xi-h))}{4 \tau h}\right.\\
    &\qquad\quad\left.- \frac{\e^{\iu\alpha}(\e^{-\iu\beta}e^{n-1}(\xi+h) - \e^{\iu\beta}e^{n-1}(\xi-h))}{4 \tau h}\right.\\
    &\qquad\quad\left.+\frac{\iu\beta(\e^{-\iu\alpha}e^{n+1}(\xi)-\e^{\iu\alpha}e^{n-1}(\xi))}{2\tau h}
    \right)\\
   &+\lambda\left(|p^{n}(\xi)|^2p^{n}(\xi)-|\wt{p}(t_n,\xi)|^2\wt{p}(t_n,\xi)\right)\\
    &= -d(t,\xi).
\end{aligned}
\end{equation}
The  Fourier transform of $e^n$ then satisfies
\[
    \begin{aligned}
   &\eps^2\frac{\e^{-\iu\alpha}(1+\iu\alpha){\wh {e}}^{n+1}(\theta) -2{\wh e}^n(\theta) +\e^{\iu\alpha}(1-\iu\alpha){\wh e}^{n-1}(\theta) }{\tau^2}\\
   +&-(1-c_g^2)\frac{2(\cos(\theta h)-1)}{h^2}{\wh e}^n(\theta) \\
    +&\iu\eps c_g\left(\sin(\beta-\theta h)-\beta\right)\frac{\e^{-\iu\alpha}{\wh e}^{n+1}(\theta) -\e^{\iu\alpha}{\wh e}^{n-1}(\theta) }{\tau h}+\wh{d}^n(\theta)\\
   +&\lambda\mathcal{F} \left(|p^{n}(\xi)|^2p^{n}(\xi)-|\wt{p}(t_n,\xi)|^2\wt{p}(t_n,\xi)\right)=0.
    \end{aligned}
\]
This equation can be written in the one-step formulation
\begin{align*}
\begin{pmatrix}
{\wh e}^{n+1}(\theta) \\ {\wh e}^{n}(\theta) 
\end{pmatrix} =&
G(\theta)
\begin{pmatrix}
{\wh e}^{n}(\theta) \\ {\wh e}^{n-1}(\theta) 
\end{pmatrix}
-\frac{\tau^2}{\eps^2(1+\iu \gamma_\theta )}
\begin{pmatrix}
\wh{d}^n(\theta) \\ 0
\end{pmatrix}
\\
& -\lambda\frac{\tau^2}{\eps^2(1+\iu \gamma_\theta )}
\begin{pmatrix}
\mathcal{F}\left(|p^{n}(\xi)|^2p^{n}(\xi)-|\wt{p}(t_n,\xi)|^2\wt{p}(t_n,\xi)\right)(\theta)\\ 0
\end{pmatrix}
\end{align*}
with $\gamma_\theta $ and $G(\theta)$ defined in \eqref{eq:matrix}. Note that $\left|\frac{\tau^2}{\eps^2(1+\iu \gamma_\theta )}\right|\sim \tau$ under the stability condition \eqref{eq:stability}. Introducing ${\mathcal{E}}^{n}=\begin{pmatrix}        e^{n+1}\\
        e^{n}
     \end{pmatrix}$,
 using Lemma~\ref{th:stability} and \eqref{eq:alge} for dealing with the nonlinearity, we obtain
\[
\begin{aligned}
 \vvvert \mathcal{E}^n\vvvert
\leq&
(1+c\tau)
\vvvert{\mathcal{E}}^{n-1}\vvvert
+\widetilde c\tau \|d(t_n,\cdot)\|_{A(\mathbb{R})}\\
\leq& (1+c\tau)^n
\vvvert{\mathcal{E}}^{0}\vvvert
+\widetilde c\tau\sum_{j=1}^{n}(1+c\tau)^{n-j}\|d(t_j,\cdot)\|_{A(\mathbb{R})}
\\
\leq& \exp(cn\tau)\vvvert\mathcal{E}^{0}\vvvert
+\widetilde c\tau\frac{\exp(cn\tau)-1}{c\tau}\sup_{t\in[0,T]}\|d(t,\cdot)\|_{A(\mathbb{R})},
\end{aligned}
\]
which yields the result.
\qed\end{proof}

The same nonlinear stability bound (possibly with a different constant $C$) is obtained for the weighted Crank--Nicolson method without any CFL-type condition between $\tau$ and $h$ and $\eps$, using the same arguments together with Lemma~\ref{lem:stability-cn}.

With Lemmas \ref{lem:defect-wiener} and \ref{lem:stability} at hand, we are finally in the position to prove Theorems~\ref{th:MFE}--\ref{thm:polarized}.

\medskip\noindent
{\it Proof of Theorems \ref{th:MFE} and~\ref{thm}}.
Combined with Lemma~\ref{lem:defect-wiener} (consistency), Lemma~\ref{lem:stability} (stability) establishes the dominant terms in the modulated Fourier expansion of the numerical solution as stated in Theorem~\ref{th:MFE}. Comparing this expansion with the modulated Fourier expansion of the exact solution in Proposition~\ref{prop} then yields the solution error bound stated in Theorem~\ref{thm}.

The proof of Proposition~\ref{prop} gives the velocity
\[
\eps^2 \partial_t u(t,x) = -\iu \omega\, a^+(t,\xi) \, \e^{\iu (\kappa x - \omega t/\eps)/\eps} 
+ \iu \omega\, a^-(t,\eta) \, \e^{\iu (\kappa x + \omega t/\eps)/\eps} + \mathcal{O}(\eps),
\]
which, together with Theorem~\ref{th:MFE}, leads to the velocity error estimate in Theorem~\ref{thm}. \qed

\medskip\noindent
{\it Proof of Theorem~\ref{thm:polarized}}.
The higher-order accuracy in $\eps$ for the solution $u$ follows from the polarized approximation in Proposition~\ref{prop2} together with the refined defect estimate for the numerical solution given in Lemma~\ref{lem:defect-wiener-polarized}.

For the velocity approximation, Proposition \ref{prop2} gives
\[
\eps^2\partial_t\wt{u}(t,x)=-\left(\iu\omega (a(t,\xi)+\eps b(t,\xi))+\eps c_g\partial_\xi a(t,\xi)\right)\e^{\iu(\kappa x-\omega t/\eps)/\eps}+\mathcal{O}(\eps^2).\\
\] 
Comparing this expression with \eqref{eq:v-pola} then yields the second error bound.
\qed

\section{Numerical experiments}\label{sec:num}
We consider the one-dimensional nonlinear Klein--Gordon equation
\eqref{eq:KG}
with $\lambda = 1$. The final time is set to $T = 0.5$. Numerical errors are measured at $T$ in the discrete maximum norm.

Instead of solving the equation on the whole real line, we compute the transformed variables $p(t,\xi)$ and $q(t,\eta)$ on the bounded interval $\xi, \eta \in [-L, L]$ with $L=8$, which is large enough to fully contain the localized wave packets and allow the use of periodic boundary conditions. \bys In the numerical experiments for the given initial data, we observe that the numerical solution remains below $10^{-11}$ at the boundaries throughout the simulation time interval $t \in [0,T]$. \eys

To implement Algorithm~\ref{subsec:alg}, we distinguish two cases based on the condition
\[
L - c_g t / \eps \leq -L + c_g t / \eps, \quad\text{i.e., }\ c_g t \ge L\eps.
\]
If this condition is satisfied, the two wave packets, which move with group velocities $\pm c_g/\eps$, are  separated at time $t$. Otherwise, the wave packets overlap and interpolation is required, as described in Remark \ref{rem:interpolation}. For polarized initial data, there is only one wave packet and no such distinction is needed.

\bys
In our implementation, the linear part
is solved by fast Fourier transform. The implicit system in the weighted Crank–Nicolson method is solved using a fixed-point iteration applied to the nonlinear
term, initialized with the solution from the previous time step, and iterated until
the residual is below a tolerance of $10^{-10}$. For the exponentially weighted leapfrog algorithm, we chose the time step size
    \begin{equation}\label{eq:step}
    \tau=\begin{cases}
    h^2/5, & |\beta|>1,\\
    \eps h/20, & \text{otherwise},
    \end{cases}
    \end{equation}
    so that the stability condition \eqref{eq:stability} is satisfied for all $h$ and $\eps$. For the exponentially weighted Crank--Nicolson algorithm, we set the time step size as
    \begin{equation}\label{eq:step2}
    \tau=\begin{cases}
    h/20, & |\beta|>1,\\
    \eps h/20, & \text{otherwise}.
    \end{cases}
    \end{equation} 
    The smaller time step in the regime $|\beta|\le1$, i.e., $\varepsilon^2 \ge h^2$, is used to ensure that the temporal discretization error $\tau^2/\varepsilon^6$ (cf. Remark~\ref{rem:unif}) does not dominate, so that the observed convergence is governed by the spatial discretization error.
\eys

\bys The reference solution is computed differently depending on the value of $\eps$. For $\eps \ge 10^{-2}$, we solve the original problem using a Fourier spectral discretization in space with 8000 grid points and Strang splitting in time with a time step size of $\min(10^{-4},\tau/5)$. For $\eps<10^{-2}$, directly solving the original problem becomes prohibitively expensive. In this regime, we instead solve the limit equations for $a$ and $b$ given in Proposition~\ref{prop} or Proposition~\ref{prop2}, using a Fourier spectral discretization with 6000 grid points and Strang splitting with a time step size of $10^{-4}$.
\eys

\begin{figure}[ht]
\centerline{
\includegraphics[scale=0.48]{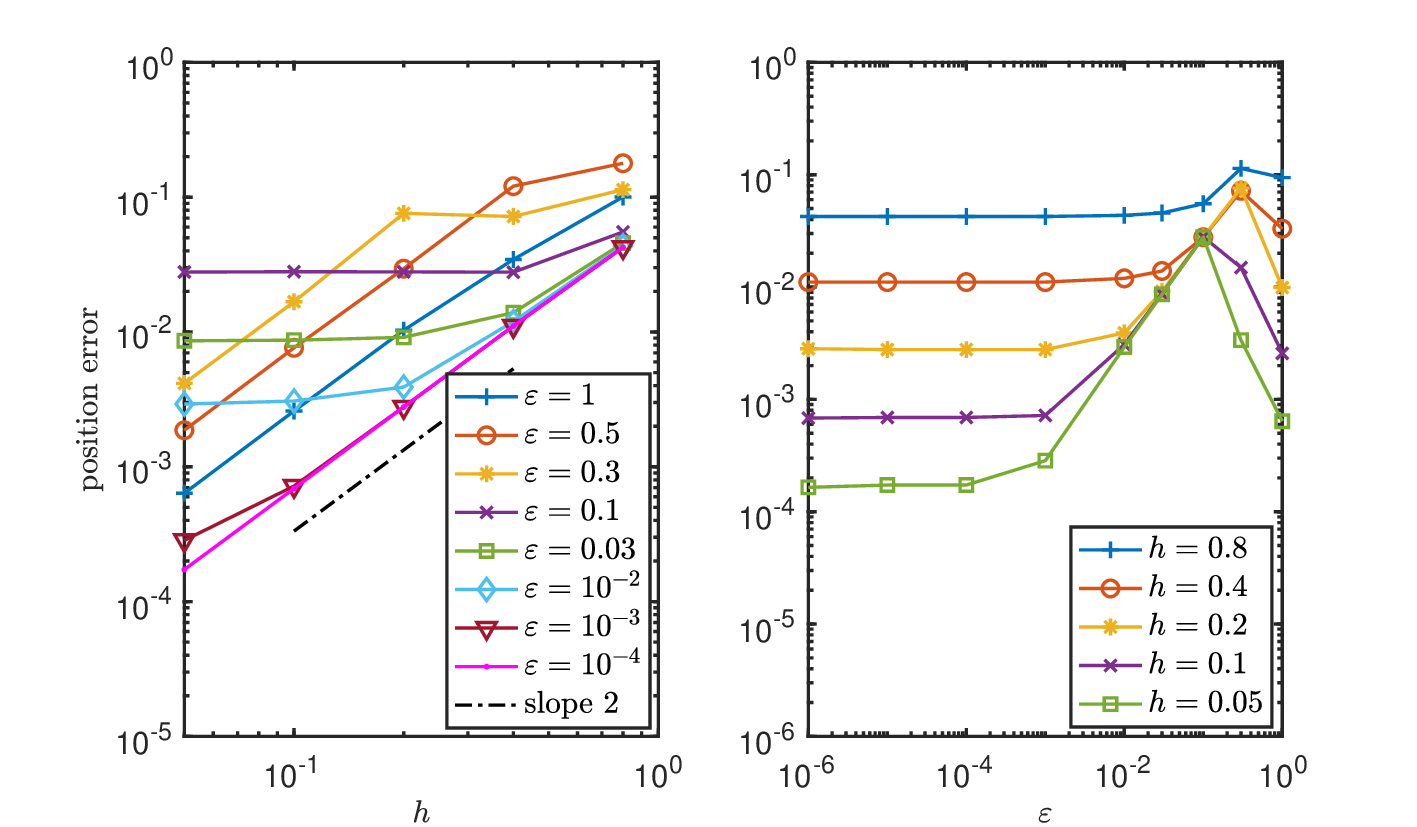}
}
\centerline{
\includegraphics[scale=0.48]{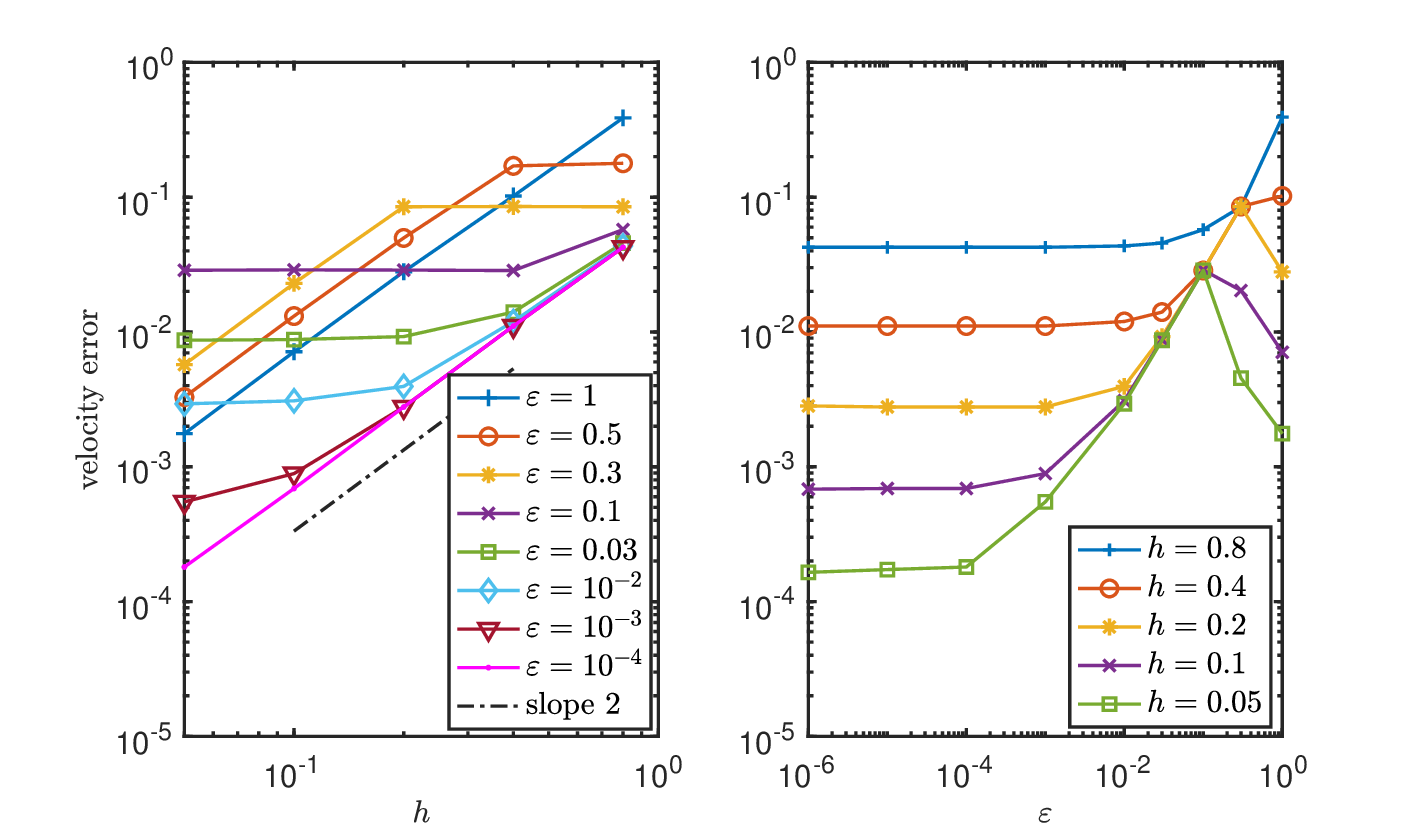}
}
\caption{ Weighted leapfrog method with general highly oscillatory initial data. Left: errors vs. $h$ for different values of $\eps$. Right: errors vs. $\eps$ for different values of $h$. }\label{fig:order}
\end{figure}

We first test our method for non-polarized initial data
\[
u(0,x) = \e^{-x^2} \e^{\iu x/\eps}, \quad 
\partial_t u(0,x) = \frac{1}{\eps^2} \e^{-x^2} \e^{\iu x/\eps}.
\]
\bys
Figure~\ref{fig:order} shows the results obtained with the weighted leapfrog method, while Figure~\ref{fig:order_CN} presents those for the weighted Crank--Nicolson method. \eys 
The left panels show the relative errors in $u$ and $\partial_t u$ versus $h$ for several fixed values of $\eps$. The error initially behaves like $h^2$ and reaches a level of $\mathcal{O}(\eps)$, consistent with the asymptotic estimate in Theorem~\ref{thm}. When $h^2 \le c \eps^5$ (with $c=5$ in this test), the $h^2$ scaling is recovered. The right panels plot the errors versus $\eps$ for fixed $h$, showing that for small $\eps$ the error levels off at a value proportional to $h^2$.

\begin{figure}[ht]
\centerline{
\includegraphics[scale=0.48]{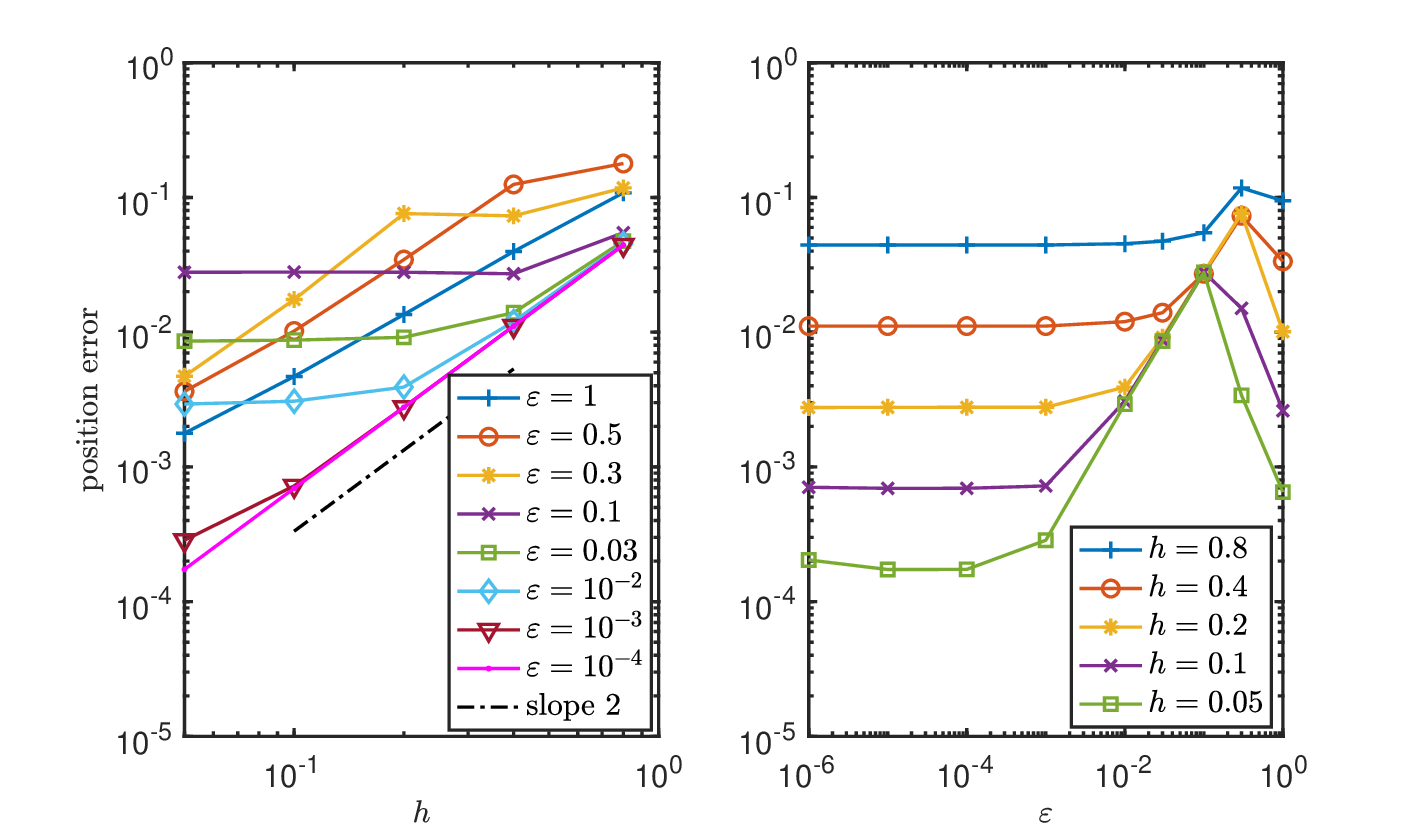}
}
\centerline{
\includegraphics[scale=0.48]{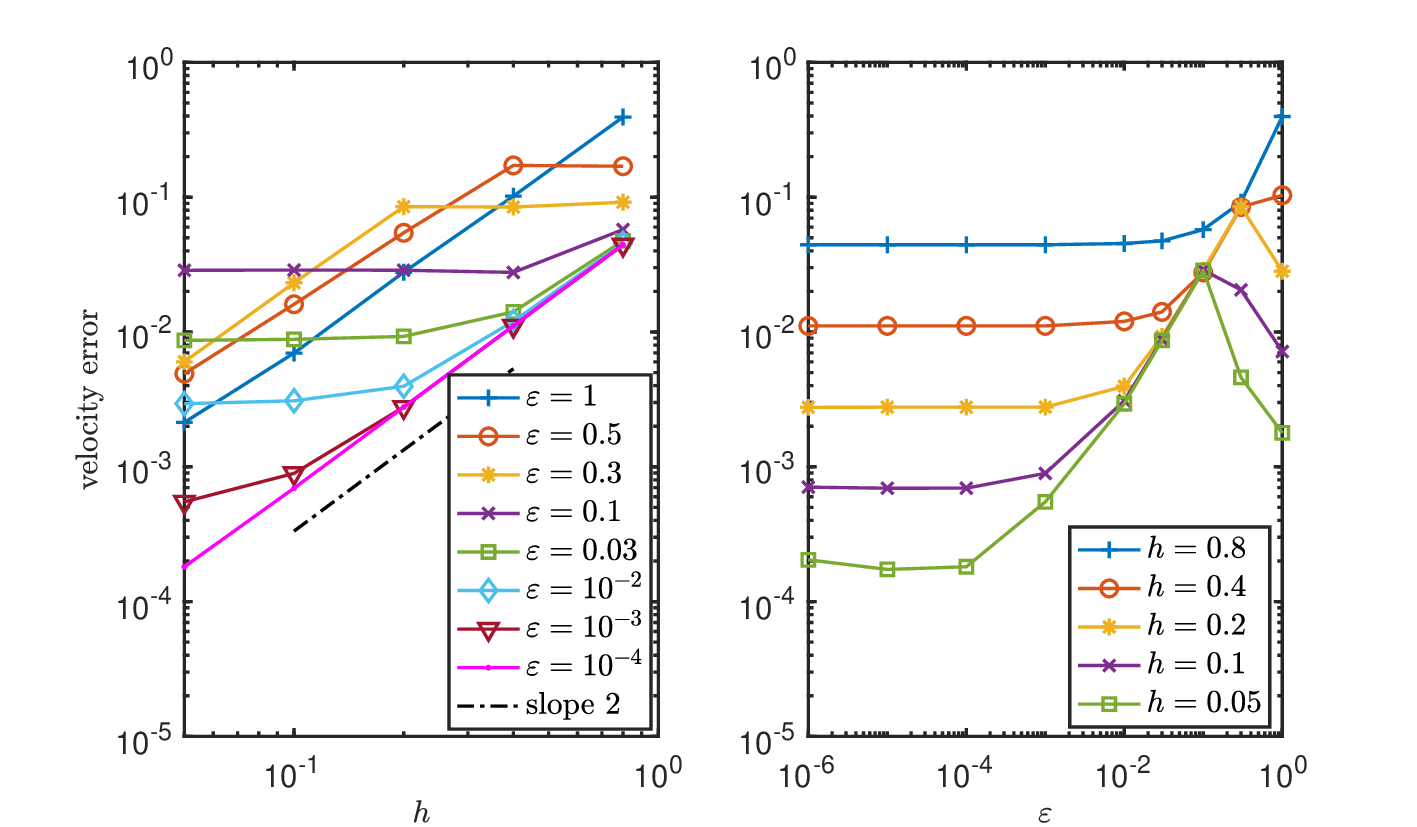}
}
\caption{ Weighted Crank--Nicoslon method with general highly oscillatory initial data. Left: errors vs. $h$ for different values of $\eps$. Right: errors vs. $\eps$ for different values of $h$. }\label{fig:order_CN}
\end{figure}

\begin{figure}[ht]
\centerline{
\includegraphics[scale=0.48]{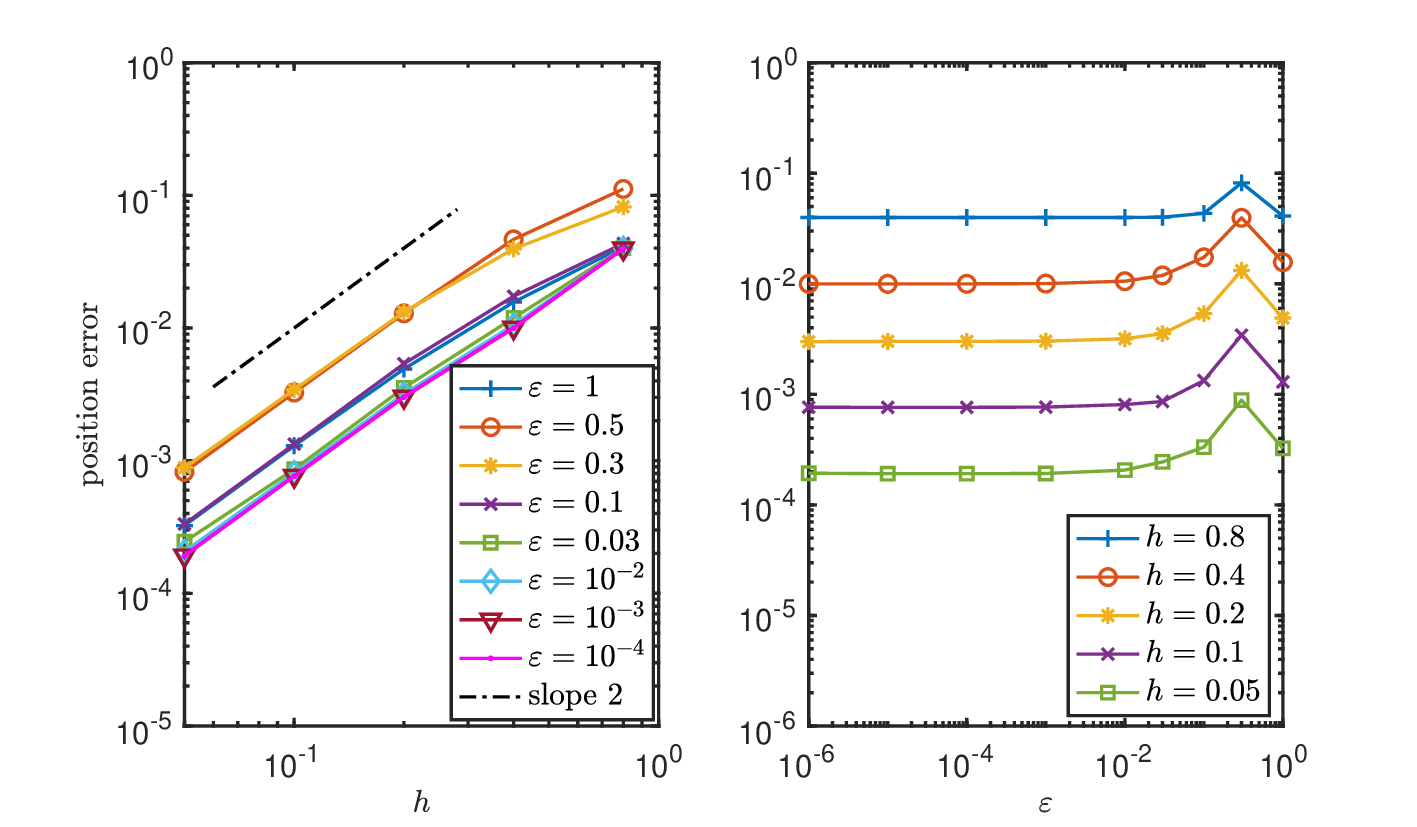}}
\centerline{
\includegraphics[scale=0.48]{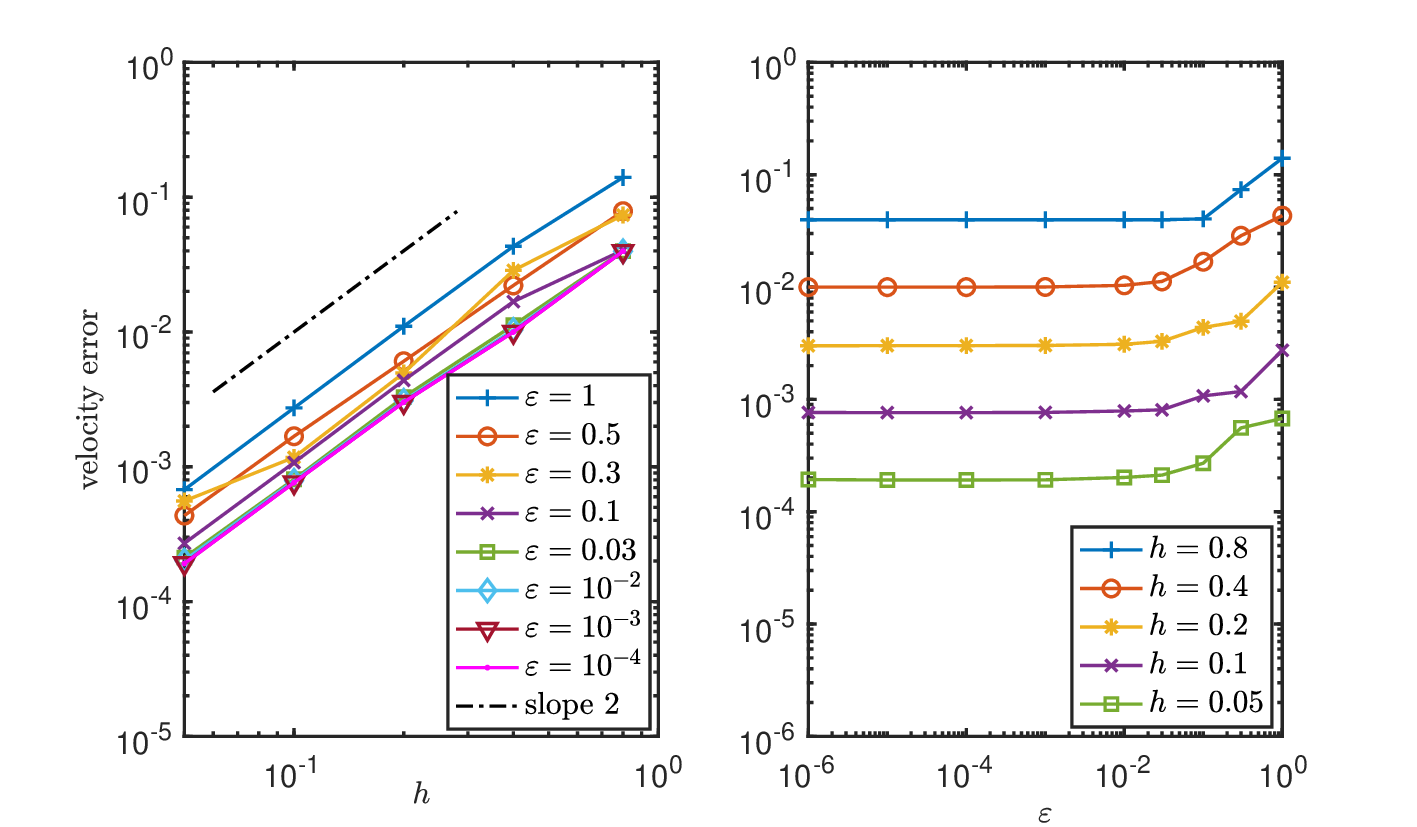}
}
\caption{ Weighted leapfrog method with polarized initial data. Left: errors vs. $h$ for different values of $\eps$. Right: errors vs. $\eps$ for different values of $h$.}\label{fig:order2}
\end{figure}

Next, we consider polarized initial data \eqref{eq:init-pola}
\[
u(0,x) = \e^{-x^2} \e^{\iu x/\eps}, \quad 
\partial_t u(0,x) = \left(-\frac{\iu \omega}{\eps^2} + \frac{2 c_g x}{\eps} \right) \e^{-x^2} \e^{\iu x/\eps}.
\]
In this case, the frequency is $\omega = \sqrt{2}$ and the group velocity is $c_g=1/\sqrt{2}$. 
\bys
Figure~\ref{fig:order2} shows the results obtained with the weighted leapfrog method, while Figure~\ref{fig:order2_CN} presents those for the weighted Crank--Nicolson method. \eys 
The left panels show the relative errors in $u$ and $\partial_tu$ versus $h$ for several fixed values of $\eps$, while the right panels show the errors plotted against $\eps$. Compared to the non-polarized case, the error is reduced for intermediate values of $\eps$, which is consistent with Theorem~\ref{thm:polarized}, where the accuracy in $\eps$ improves from $\mathcal{O}(\eps)$ to $\mathcal{O}(\eps^2)$ for polarized initial data.

\begin{figure}[ht]
\centerline{
\includegraphics[scale=0.48]{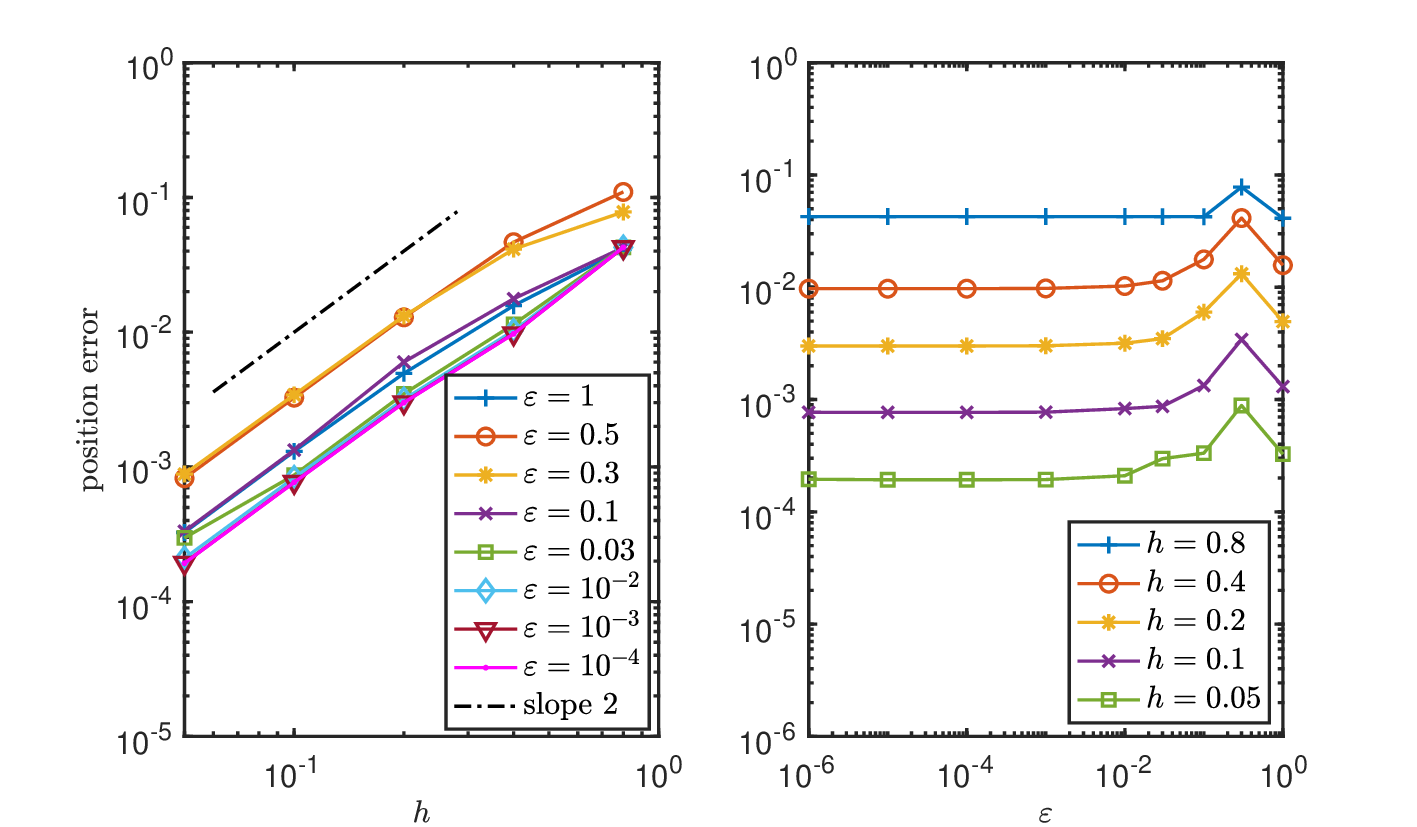}}
\centerline{
\includegraphics[scale=0.48]{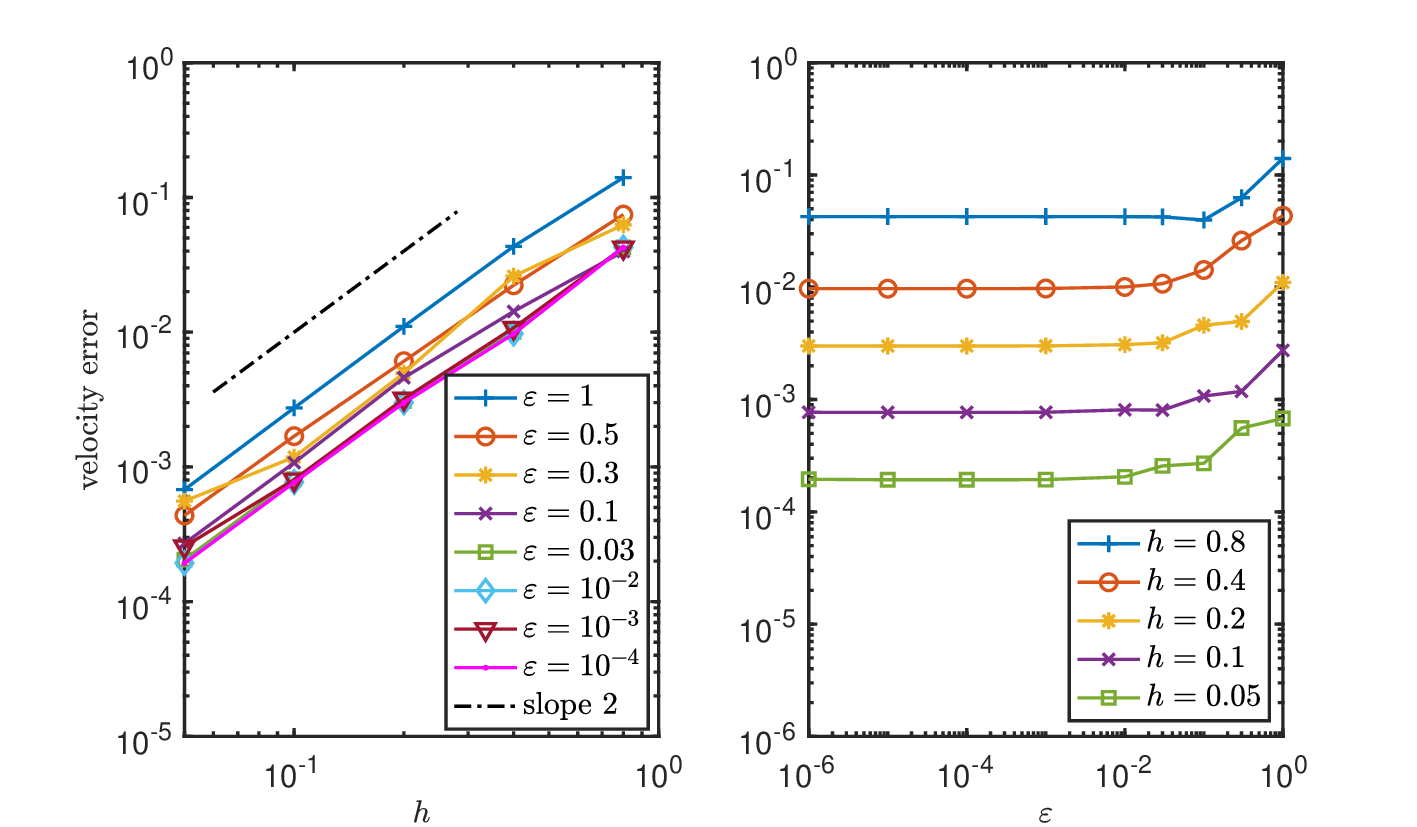}
}
\caption{ Weighted Crank--Nicolson method with polarized initial data. Left: errors vs. $h$ for different values of $\eps$. Right: errors vs. $\eps$ for different values of $h$.}\label{fig:order2_CN}
\end{figure}

\vspace{-1mm} 

\section*{Acknowledgement} 
We thank Tobias Jahnke and Johanna M\"odl for reading the first draft and providing helpful comments. 
We further thank two anonymous referees for their insightful comments on a previous version of the paper, which significantly helped and incited us to improve the presentation.
This work was supported by the Deutsche Forschungsgemeinschaft (DFG, German Research Foundation) -- Project-ID258734477 -- SFB 1173. 

\vspace{-1mm}

\bibliographystyle{abbrvurl}
\bibliography{ref}

\end{document}